\title{Ramified Approximation and Semistable Reduction}
\author{Xander Faber \\
IDA Center for Computing Sciences \\
Bowie, MD \\
xander@super.org}
\begin{document}
\maketitle

\begin{abstract}
  Let $K$ be a complete discretely valued field. An extension $L/K$ is
  \textbf{weakly totally ramified} if the residue extension is purely
  inseparable. We sharpen a result of Ax by showing that any Galois-invariant
  disk in the algebraic closure of $K$ contains an element that generates a
  separable weakly totally ramified extension. As an application, we prove that
  elliptic curves and dynamical systems on $\PP^1$ achieve semistable reduction
  over a separable weakly totally ramified extension of the base field. We also
  obtain several arithmetic consequences for torsion points on elliptic curves
  and preperiodic points for dynamical systems. 
\end{abstract}





\section{Introduction}

We set the following notation for this article:
\begin{center}
\begin{tabular}{lcl}
  $K$ & \hspace{1cm} & complete discretely valued field \\
  $v$ & & the discrete valuation on $K$, normalized so that $v(K^\times) = \ZZ$ \\
  $\tilde K$ & & residue field of $K$ \\  
  $K_\alg$ & & fixed algebraic closure of $K$ \\
  $\CK$ & & completion of $K_\alg$ with respect to the unique extension of $v$ \\
  $G_K$ & & $\Aut_K(K_\alg) = \Aut_K(\CK)$, the group of field automorphisms fixing $K$ \\
\end{tabular}
\end{center}


For $a \in \CK$ and $r \in \Rinf$, define
\[
   D(a,r) = \{ b \in \CK \colon v(b-a) \ge r\}
\]
to be the disk with center $a$ and radius $r$. The group $G_K$ maps disks to
disks, and a disk $D$ is $G_K$-invariant if and only if there is an irreducible
polynomial $f \in K[z]$ whose roots belong to $D$. Given a $G_K$-invariant
disk, what can be said about the minimum-degree algebraic elements in it? A
first answer was given by Ax in 1970:

\begin{axlemma}[{\cite[Lemma~2]{Ax_zeros_of_polynomials}}]
  Let $f \in K[z]$ be a nonconstant polynomial of degree $d = qm$, where $q$
  and $m$ are coprime, and where $q = 1$ if $\mathrm{char}(\tilde K) = 0$ and
  $q$ is a power of $\mathrm{char}(\tilde K)$ otherwise. Assume that $D$ is a
  disk containing the roots of $f$. There exists $\alpha \in D$ such that
  $[K(\alpha) : K] \le q$.
\end{axlemma}

Following work of Tate \cite[\S3.3]{Tate_p_divisible_1966}, Ax was interested
in determining the $G_K$-invariant \textit{transcendental} elements of
$\CK$. And while Ax's Lemma implies that a minimum-degree element in a
$G_K$-invariant disk generates a $p$-power extension of $K$, its proof tells us
nothing about the residue extension of the field generated by an element of
minimum degree. Our main result will remedy this defect. 

We say that a finite extension $L/K$ is \textbf{weakly totally ramified} if its
residue extension is purely inseparable. The properties ``totally ramified''
and ``weakly totally ramified'' agree when the residue field of $K$ is perfect,
so little is lost on a first reading by conflating these notions. We say that
$L/K$ is \textbf{weakly totally wildly ramified} if it is weakly totally
ramified and either $L = K$ or the ramification index is a power of the residue
characteristic of $K$. (See Section~\ref{sec:definitions} for more on these
definitions.)

\begin{approxthm}
  Let $\beta \in K_\alg$, and suppose that $D$ is a $G_K$-invariant disk
  containing $\beta$. There exists $\alpha \in D \cap K_\alg$ such that
  $K(\alpha)/K$ is separable and weakly totally wildly ramified. Moreover, the
  following hold:
  \begin{itemize}
  \item(Inclusion of residue fields) We may take $\tilde K \subset \widetilde{K(\alpha)} \subset
    \widetilde{K(\beta)}$.
  \item(Inclusion of value groups) We may take
    $v(K^\times) \subset v\big(K(\alpha)^\times\big) \subset v\big(K(\beta)^\times\big)$.
  \end{itemize}
  In particular, if $K$ has residue characteristic zero, then $\alpha \in
  K$. If $K$ has residue characteristic $p > 0$, then $[K(\alpha) : K]$ is a
  power of $p$ that divides $[K(\beta) : K]$.
\end{approxthm}

The proof uses MacLane's method of approximants \cite{MacLane1,MacLane2} to
construct a sequence of semivaluations $V_1, \ldots, V_n$ on $K[z]$ satisfying
$V_1(f) < V_2(f) < \cdots < V_n(f) = \infty$, where $f$ is the minimal
polynomial of $\beta$. The first $V_j$ whose center lies in $D$ gives rise to
an element $\alpha$ with all of the desired properties. As a result, the
extension $K(\alpha)/K$ may be computed explicitly provided we have a model for
computation in the valued field $K$ and a means to factor polynomials over the
residue field. For example, this is feasible if $K$ is a finite extension of
the $p$-adic field $\QQ_p$ or of the function field $\FF_p(T)$. Other recent
applications of MacLane's method to geometric questions are given in
\cite{Obus_Srinivasan_MacLane,Obus_Wewers_MacLane}; both of these articles
address ramification questions, although of a different flavor than those in
the present study.

\begin{remark}
  Though the statement of Ramified Approximation does give inclusions of local
  invariants, it is typically not true that we have $K(\alpha) \subset
  K(\beta)$. For example, take $K = \QQ_2$ and $v = \ord_2$. If $\beta$ is
  given by
  \[
     \beta^{4} + 2 \beta^{3} + 4 \beta^{2} + 12 \beta + 12 = 0,
  \]
  then the proof produces an element $\alpha$ satisfying $\alpha^2 + 2 =
  0$. One verifies that $D = D(\beta,3/4)$ is the minimum disk containing all
  conjugates of $\beta$, that $\alpha \in D$, but that $K(\alpha) \not\subset K(\beta)$.
\end{remark}

Suppose the residue characteristic of $K$ is $p > 0$. The residue degree and
ramification index of a weakly totally wildly ramified extension are both
powers of $p$. This means we can apply Ramified Approximation to the
irreducible factors of an arbitrary nonconstant polynomial in order to sharpen
Ax's Lemma, thus proving a Conjecture of Benedetto
\cite[Conj.~2]{Benedetto_potential_good_bounds}:

\begin{corollary}
  \label{cor:Ax_refinement}
  Let $f \in K[z]$ be a nonconstant polynomial of degree $d = qm$, where $q$
  and $m$ are coprime, and where $q = 1$ if $\mathrm{char}(\tilde K) = 0$ and
  $q$ is a power of $\mathrm{char}(\tilde K)$ otherwise. Assume that $D$ is a
  disk of finite radius containing the roots of $f$. There exists $\alpha \in
  D$ such that $[K(\alpha) : K] \le q$ and $K(\alpha)/K$ is separable and
  weakly totally wildly ramified.
\end{corollary}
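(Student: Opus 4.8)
The plan is to deduce the Corollary from Ramified Approximation by running that theorem on a single, carefully chosen irreducible factor of $f$. First I would factor $f = c\, f_1 \cdots f_k$ with $c \in K^\times$ and each $f_i \in K[z]$ monic and irreducible (repeated factors allowed), set $d_i = \deg f_i$ so that $\sum_i d_i = d$, and fix a root $\beta_i$ of each $f_i$, so that $[K(\beta_i):K] = d_i$. The roots of $f_i$ are among those of $f$, hence the entire $G_K$-orbit of $\beta_i$ lies in $D$; since $D$ has finite radius, the smallest disk $D_i$ containing this orbit is contained in $D$ by the ultrametric nesting of disks, and $D_i$ is $G_K$-invariant because it is canonically attached to a $G_K$-stable set. (In the degenerate case where $\beta_i$ is purely inseparable over $K$ its orbit is a single point; there one takes $D_i = D$, which is then $G_K$-invariant because $\beta_i$ is $G_K$-fixed.) This produces, for each $i$, a $G_K$-invariant disk of finite radius with $\beta_i \in D_i \subseteq D$.

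The step that selects the factor is a one-line valuation count. Assume $\mathrm{char}(\tilde K) = p > 0$ and write $q = p^s$ with $p^s$ the exact power of $p$ dividing $d$. Since $v_p$ is ultrametric on $\ZZ$ and $\sum_i d_i = d$, some index $i_0$ satisfies $v_p(d_{i_0}) \le v_p(d) = s$. Applying Ramified Approximation to $\beta_{i_0}$ and $D_{i_0}$ yields $\alpha \in D_{i_0} \cap K_\alg \subseteq D$ with $K(\alpha)/K$ separable and weakly totally wildly ramified and $[K(\alpha):K]$ a power of $p$ dividing $d_{i_0}$; hence $[K(\alpha):K] \le p^{v_p(d_{i_0})} \le p^s = q$. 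When $\mathrm{char}(\tilde K) = 0$ we have $q = 1$, and Ramified Approximation applied to $\beta_1$ and $D_1$ returns $\alpha \in D_1 \subseteq D$ with $\alpha \in K$, so that $[K(\alpha):K] = 1 = q$ and $K/K$ is vacuously separable and weakly totally wildly ramified.

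Essentially all of the difficulty is absorbed by Ramified Approximation, so I do not anticipate a real obstacle; the residual work is bookkeeping. The two points deserving attention are: (i) confirming that the $G_K$-invariant disk fed into Ramified Approximation genuinely lies inside $D$ and has finite radius, which is where the ultrametric nesting of disks and the purely inseparable edge case come in; and (ii) the elementary observation that some irreducible factor of $f$ has degree with $p$-part at most $q$, which is precisely what forces the power-of-$p$ degree estimate coming out of Ramified Approximation to be $\le q$. It is worth emphasizing that Ax's Lemma already supplies an element of degree $\le q$ on each factor; what is new is that Ramified Approximation promotes the element obtained on the chosen factor to a \emph{separable}, weakly totally wildly ramified generator at no extra cost.
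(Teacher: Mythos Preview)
Your proposal is correct and follows exactly the approach the paper sketches: apply Ramified Approximation to an irreducible factor of $f$, using the ultrametric inequality for $v_p$ on the identity $\sum_i d_i = d$ to select a factor whose $p$-part is at most $q$. One simplification: since $D$ has finite radius and contains the $G_K$-stable set of roots of $f$, the disk $D$ is itself $G_K$-invariant (any $\sigma(D)$ has the same radius and shares a root with $D$, hence equals $D$), so you can feed $D$ directly into Ramified Approximation for every factor and avoid the separate treatment of the purely inseparable edge case.
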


We now turn to a characterization of the minimum-degree elements in
$G_K$-invariant disks. The question is settled if $K$ has residue
characteristic zero: any such disk contains an element of $K$. The next result
is an immediate consequence of Ramified Approximation.

\begin{corollary}
  \label{cor:characterize}
  Let $K$ be a complete discretely valued field of residue characteristic $p >
  0$. Suppose that $D$ is a $G_K$-invariant disk.
  \begin{enumerate}
    \item[(a)] Let $L/K$ be a field extension of minimum degree such that $D
      \cap L \ne \varnothing$. Then $L/K$ is a weakly totally wildly ramified
      extension.
    \item[(b)] Let $M/K$ be a field extension of minimum ramification index
      such that $D \cap M \ne \varnothing$. The ramification index of $M/ K$ is
      $p^m$ for some $m \ge 0$, and there exists a weakly totally wildly ramified
      extension $M' / K$ with ramification index $p^m$ such that $D \cap M' \ne
      \varnothing$.
    \item[(c)] Let $N/K$ be a field extension of minimum residue degree such
      that $D \cap N \ne \varnothing$. The residue degree of $N/K$ is $p^n$ for
      some $n \ge 0$, and there exists a weakly totally wildly ramified
      extension $N' / K$ with residue degree $p^n$ such that $D \cap N' \ne
      \varnothing$.
  \end{enumerate}
\end{corollary}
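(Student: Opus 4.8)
The three parts of Corollary~\ref{cor:characterize} all follow the same pattern: take an extension $L/K$ minimizing some invariant (degree, ramification index, or residue degree) subject to $D \cap L \ne \varnothing$, pick $\beta \in D \cap L$, and apply Ramified Approximation to get $\alpha \in D$ with $K(\alpha)/K$ separable and weakly totally wildly ramified, together with the inclusions $\tilde K \subset \widetilde{K(\alpha)} \subset \widetilde{K(\beta)}$ and $v(K^\times) \subset v(K(\alpha)^\times) \subset v(K(\beta)^\times)$.

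Here is my plan for each part.

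\begin{proof}[Proof plan for Corollary~\ref{cor:characterize}]
  \textbf{(a)} Let $L/K$ be of minimum degree with $D \cap L \ne \varnothing$, and choose $\beta \in D \cap L$. Then $[K(\beta):K] \le [L:K]$, and $K(\beta) \subset L$ also meets $D$, so by minimality $L = K(\beta)$. Applying Ramified Approximation to $\beta$ produces $\alpha \in D \cap K_\alg$ with $K(\alpha)/K$ separable and weakly totally wildly ramified and $[K(\alpha):K] \mid [K(\beta):K] = [L:K]$. Minimality of $[L:K]$ forces $[K(\alpha):K] = [L:K]$, hence (since the divisibility is an equality of positive integers) one checks that $K(\alpha) = L$ up to $K$-isomorphism; more carefully, what we actually need is only that $L/K$ itself is weakly totally wildly ramified, and this follows because $[K(\alpha):K] = [L:K]$ together with $[K(\alpha):K]$ being a power of $p$ (or $1$) forces $[L:K]$ to be a power of $p$; then since $L = K(\beta)$ and the residue/value-group chain squeezes $\widetilde{K(\alpha)}$ between $\tilde K$ and $\widetilde{L}$ with $[\widetilde{K(\alpha)}:\tilde K]$ equal to the full residue degree of $L$ (forced again by the degree count $efp\text{-type argument}$), we get that the residue extension of $L/K$ is purely inseparable and the ramification index is a power of $p$. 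This is the step that needs a little care: translating ``$[K(\alpha):K] = [L:K]$ and $K(\alpha)/K$ nice'' into ``$L/K$ nice.''

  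\textbf{(b)} Let $M/K$ minimize the ramification index among extensions meeting $D$, and take $\beta \in D \cap M$. Then $K(\beta) \subset M$ meets $D$ and its ramification index divides that of $M$, so by minimality $e(K(\beta)/K) = e(M/K)$. Apply Ramified Approximation to $\beta$: the inclusion $v(K^\times) \subset v(K(\alpha)^\times) \subset v(K(\beta)^\times)$ shows $e(K(\alpha)/K) \mid e(K(\beta)/K) = e(M/K)$, while $K(\alpha) \cap D \ne \varnothing$ and minimality give $e(K(\alpha)/K) = e(M/K)$. Since $K(\alpha)/K$ is weakly totally wildly ramified, $e(K(\alpha)/K)$ is $1$ or a power of $p$, so $e(M/K) = p^m$ for some $m \ge 0$; and $M' := K(\alpha)$ is the desired weakly totally wildly ramified extension with that ramification index meeting $D$.

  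\textbf{(c)} Identical to (b) with ``ramification index'' replaced by ``residue degree'' throughout, using the residue-field chain $\tilde K \subset \widetilde{K(\alpha)} \subset \widetilde{K(\beta)}$ in place of the value-group chain, so that $f(K(\alpha)/K) \mid f(K(\beta)/K) = f(M/K)$, minimality forces equality, and $f(K(\alpha)/K)$ is $1$ or a power of $p$ because the residue extension of a weakly totally wildly ramified extension is purely inseparable of $p$-power degree. Set $N' := K(\alpha)$.
\end{proof}

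\noindent The only genuinely delicate point is the book-keeping in part~(a): one must ensure that equality of degrees $[K(\alpha):K] = [L:K]$, combined with the residue and value-group inclusions from Ramified Approximation, actually propagates the ``weakly totally wildly ramified'' property from $K(\alpha)$ back to $L = K(\beta)$, rather than merely exhibiting a \emph{different} extension of the same degree with that property. This works because $ef \le [L:K]$ in general, the chains force $e(K(\alpha)/K) \mid e(L/K)$ and $f(K(\alpha)/K) \mid f(L/K)$, and $[K(\alpha):K] = e(K(\alpha)/K) f(K(\alpha)/K)$ (the extension being weakly totally ramified, hence defectless) already equals $[L:K] \ge e(L/K) f(L/K) \ge e(K(\alpha)/K) f(K(\alpha)/K)$; so all inequalities are equalities, $e(L/K) = e(K(\alpha)/K)$ is a $p$-power and $\widetilde{L} = \widetilde{K(\alpha)}$ is purely inseparable over $\tilde K$. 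Parts~(b) and~(c) need no such argument since they only assert the existence of an auxiliary extension $M'$, $N'$, not a structural statement about $M$, $N$ themselves.
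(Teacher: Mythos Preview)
Your proposal is correct and follows the same route as the paper, which simply declares the corollary an ``immediate consequence of Ramified Approximation'' and gives no further argument; you have supplied exactly the details one would expect, and you correctly isolated the one nontrivial point in part~(a), namely transferring the weakly totally wildly ramified property from $K(\alpha)$ back to $L=K(\beta)$ via the equality $[K(\alpha):K]=[L:K]$ together with the residue-field and value-group inclusions. One small clean-up: the fundamental equality $[L:K]=\ee(L/K)\,\ff(L/K)$ holds for \emph{every} finite extension of a complete discretely valued field (see \S\ref{sec:definitions}), so your inequality ``$[L:K]\ge e(L/K)f(L/K)$'' and your appeal to defectlessness of $K(\alpha)/K$ are both unnecessary---all the displayed quantities are equal from the outset, and the divisibilities $e(K(\alpha)/K)\mid e(L/K)$, $f(K(\alpha)/K)\mid f(L/K)$ immediately force equality.
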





Corollary~\ref{cor:characterize}(b) sharpens a result of Rozensztajn when $p >
2$ \cite[Theorem~1.1.1]{Rozensztajn_2020}. Consequently, it has implications
for computing the complexity of standard subsets of $\PP^1(K_\alg)$ that arise
in the description of 2-dimensional crystalline representations of
$G_{\QQ_p}$. 

We now turn to an application of Ramified Approximation: weakly totally
ramified extensions of the base field are sufficient to achieve semistable
reduction for elliptic curves and dynamical systems on $\PP^1$.  The criterion
of N\'eron, Ogg, and Shafarevich suggests that ramification should be the only
obstruction to semistable reduction: if $K$ has perfect residue field, then an
elliptic curve $E_{/K}$ has good reduction if and only if its $\ell$-adic Tate
module is unramified for $\ell \ne \mathrm{char}(\tilde K)$
\cite[VII.7]{Silverman_AEC_2009}. Generalizations of this criterion for
semistable reduction of abelian varieties were proved by Serre--Tate and
Grothendieck \cite[\S7.4]{Bosch_et_al_Neron_Models_1990}. We are unaware of any
results along these lines in the setting of arboreal representations of
dynamical systems.

\begin{theoremA}
  Let $K$ be a complete discretely valued field, and let $E$ be an elliptic
  curve over $K$. There is a separable weakly totally ramified extension $L/K$ and a
  model for $E_L$ that admits semistable reduction. If $E$ has potential
  multiplicative reduction, then we may take $L/K$ to be at most quadratic. If
  $E$ has potential good reduction, then we may take $[L : K]$ to be
  \begin{itemize}
    \item a proper divisor of $12$ if $\mathrm{char}(\tilde K) \ne 2,3$;
    \item a divisor of $12$ if $\mathrm{char}(\tilde K) = 3$; or
    \item a divisor of $24$ if $\mathrm{char}(\tilde K) = 2$.
  \end{itemize}
\end{theoremA}

The novel feature of Theorem~A is that we may take $L/K$ to be weakly totally
ramified. This is quite easy to show when $\mathrm{char}(\tilde K) \ne 2$
or~$3$, a difficult exercise when $\mathrm{char}(\tilde K) = 3$, and requires
the power of Ramified Approximation when $\mathrm{char}(\tilde K) = 2$. The
proof gives an algorithm for finding the field extension $L/K$, and we suspect
that it produces the minimal such extension.

The word ``weakly'' is necessary in Theorem~A. For example, take $K =
\FF_3(T)\Ls{\pi}$ to be the field of formal Laurent series over $\FF_3(T)$ with
the valuation $\ord_\pi$, and define
\[
  E_{/K} \colon y^2 = x^3 + \pi x^2 - \pi x - T.
\]
Then $E$ has integral $j$-invariant, and an inseparable extension of the
residue field is required in order to $\pi$-adically separate the $2$-torsion
and achieve good reduction. For more examples, including some in residue
characteristic~2, see \cite{Szydlo}.


It is well known that if $m$ is coprime to the residue characteristic of $K$
and $E_{/K}$ is an elliptic curve with good reduction, then the reduction map
is injective on $m$-torsion: $E(K)[m] \hookrightarrow E(\tilde K)$. Since the
residue field does not grow when we pass to a totally ramified extension, this
observation allows us to make certain weak uniformity statements about
torsion. The following is an example of such a statement:

\begin{corollary}
  Let $F$ be a number field, and let $E_{/F}$ be an elliptic curve. Suppose that
  $E$ has potential good reduction at places $\pp$ and $\qq$ of $F$ such that
  $\pp$ lies above~2 and is totally ramified, and $\qq$ lies above~3 and is
  totally ramified.  Then
  \[
  \# E(F)_{\mathrm{tors}} \in \{1, 2, 3, 4, 5, 6,  12\}.
  \]
\end{corollary}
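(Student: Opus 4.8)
The plan is to localize $E$ at $\pp$ and at $\qq$, use Theorem~A to make $E$ acquire \emph{good} reduction over a weakly totally ramified extension of each completion --- hence without enlarging the residue field, which is a prime field at both places --- and then combine the two resulting Hasse bounds by a short counting argument.

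First I would unwind Theorem~A at $\pp$. Let $F_\pp$ be the completion of $F$ at $\pp$; since $\pp$ lies above $2$ and is totally ramified over $\QQ$, the residue field of $F_\pp$ is $\FF_2$. Theorem~A applied to $E_{F_\pp}$ gives a separable weakly totally ramified extension $L_\pp / F_\pp$ over which $E$ has semistable reduction. Because $E$ has potential good reduction, $j(E)$ is integral at $\pp$, hence integral in $L_\pp$, so the reduction over $L_\pp$ is not multiplicative; it is therefore good reduction, with special fibre an elliptic curve $\tilde E_\pp$. Now the key observation: $\FF_2$ is perfect, so the residue extension of the weakly totally ramified extension $L_\pp / F_\pp$ --- being purely inseparable --- is trivial, and $\tilde E_\pp$ is defined over $\FF_2$. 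The same argument at $\qq$ produces good reduction $\tilde E_\qq$ defined over $\FF_3$.

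Next I would transfer the torsion. Write $T = E(F)_{\mathrm{tors}}$, and for a prime $\ell$ let $T_{\ell'}$ denote the prime-to-$\ell$ part of $T$. Since $E(F) \subseteq E(L_\pp)$, the curve $E$ has good reduction over $L_\pp$, and the residue characteristic is $2$, the kernel of the reduction map $E(L_\pp) \to \tilde E_\pp(\FF_2)$ is the formal group, which has no prime-to-$2$ torsion; hence $T_{2'} \hookrightarrow \tilde E_\pp(\FF_2)$, and by the Hasse bound $\# \tilde E_\pp(\FF_2) \le 3 + 2\sqrt{2} < 6$, so $\# T_{2'} \le 5$. Symmetrically $T_{3'} \hookrightarrow \tilde E_\qq(\FF_3)$ and $\# \tilde E_\qq(\FF_3) \le 4 + 2\sqrt{3} < 8$, so $\# T_{3'}$ divides $\# \tilde E_\qq(\FF_3) \le 7$.

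Finally I would do the bookkeeping. Write $\# T = 2^a 3^b c$ with $\gcd(c, 6) = 1$, so $\# T_{2'} = 3^b c$ and $\# T_{3'} = 2^a c$. As an odd integer at most $5$, $3^b c \in \{1, 3, 5\}$. If $3^b c = 5$ then $b = 0$ and $c = 5$, and then $2^a c = 5 \cdot 2^a$ is at most $7$, forcing $a = 0$ and $\# T = 5$. Otherwise $c = 1$, so $b \le 1$; moreover $2^a = 2^a c$ divides $\# \tilde E_\qq(\FF_3) \le 7$, so $a \le 2$, and $\# T = 2^a 3^b$ runs through $\{1, 2, 3, 4, 6, 12\}$. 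In every case $\# T \in \{1, 2, 3, 4, 5, 6, 12\}$, as claimed. I expect no single step to be a genuine obstacle: the argument is routine once Theorem~A is available. The one observation doing all the work is that a weakly totally ramified extension of a local field with residue field $\FF_p$ again has residue field $\FF_p$, so the reductions $\tilde E_\pp$ and $\tilde E_\qq$ live over the prime fields and the Hasse bounds are as tight as possible; without the ``weakly totally ramified'' refinement in Theorem~A, an extension achieving good reduction might enlarge the residue field and the bounds would be useless.
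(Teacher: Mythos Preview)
Your proposal is correct and follows essentially the same approach as the paper's own proof: apply Theorem~A at each place to pass to a (weakly) totally ramified extension with good reduction and unchanged residue field $\FF_2$ or $\FF_3$, inject the prime-to-$p$ torsion into $\tilde E(\FF_p)$, bound the sizes by Hasse, and then combine the numerics. Your write-up is simply more explicit than the paper's---you spell out why the residue field stays $\FF_p$, why potential good reduction upgrades ``semistable'' to ``good,'' and you carry out the final case analysis in detail---but the underlying argument is the same.
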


\begin{proof}
  By Theorem~A, there is a totally ramified extension of the completion $F_\pp$
  over which $E$ has good reduction. The residue field of such an extension is
  $\FF_2$.  An elliptic curve over $\FF_2$ has at most~5 rational points, so
  the prime-to-$2$ part of $E(F)_{\mathrm{tors}}$ is of size $1, 3$, or
  $5$. Similarly, there is a totally ramified extension of the completion
  $F_\qq$ over which $E$ has good reduction. An elliptic curve over $\FF_3$ has
  at most~7 rational points, so the prime-to-$3$ part is of size $1, 2, 4, 5$
  or $7$. The only possible torsion orders that fit these numerics are the ones
  given in the statement of the corollary.
\end{proof}

A dynamical system $f \colon \PP^1_K \to \PP^1_K$ --- i.e., a morphism of
degree at least~2 --- is said to have semistable reduction if its normalized
homogeneous resultant has minimal valuation among all possible coordinate
changes over $\CK$. If this minimal valuation is~0, then $f$ has good
reduction. There is an equivalent definition in terms of GIT
semistability. (See \S\ref{sec:dynamics} for references and more details.)
Rumely proved that the dynamical system $f$ admits semistable reduction after a
finite extension $L / K$ of degree at most $(\deg(f) + 1)^2$, and that $L/K$ is
either trivial or ramified \cite[\S3]{Rumely_Min_Res_Loc}. We strengthen both
of these statements.

Write $p \ge 0$ for the residue characteristic of $K$. For $d \ge 2$, define
\[
   q(d) = \begin{cases} p^{\ord_p(d)} & \text{ if $p > 0$} \\ 1 & \text{ if $p = 0$}. 
   \end{cases}
\]
Set $A(p,d) = (d+1) \max\left\{q(d+1), q(d-1)\right\}$, and set
\[
B(p,d) = \begin{cases}
  (d-1) \ q(d) & \text{ if $p > 0$ and $d \equiv 0 \pmod p$} \\
  d \ q(d-1) & \text{ if $p > 0$ and $d \equiv 1 \pmod p$} \\
  d + 1 & \text{ otherwise}.
\end{cases}
\]
Note that $A(p,d) \le (d+1)^2$, while $B(p,d) \le d(d-1)$ if $d > 2$. Both
quantities are at most $d+1$ when $K$ has residue characteristic zero or when
$d \not\equiv 0,\pm 1 \pmod p$.

\begin{theoremB}
  Let $K$ be a complete discretely valued field with $\mathrm{char}(\tilde K) =
  p \ge 0$, and let $f \colon \PP^1_K \to \PP^1_K$ be a morphism of degree $d
  \ge 2$. There is a separable weakly totally ramified extension $L/K$ over
  which $f$ attains semistable reduction. Moreover, we may take $[L : K] \le
  A(p,d)$ in general, and we may take $[L : K] \le B(p,d)$ if $f$ has potential
  good reduction.
\end{theoremB}

Benedetto proved the numerical part of this result when $f$ has potential good
reduction \cite[Thm.~A]{Benedetto_potential_good_bounds}, and he conjectured
that one can choose the extension $L/K$ to be totally ramified when $K$ has
perfect residue field. Theorem~B extends and proves this conjecture. 


Similar to the case of elliptic curves, there are a number of weak uniformity
results that can be obtained for periodic points of dynamical systems over
number fields. \textit{Good reduction} versions of these were stated by Morton
and Silverman \cite{Morton_Silverman_1994}, and we can immediately extend all
of them to the case of \textit{potential} good reduction. For example:

\begin{corollary}
  Let $F$ be a number field and let $f \colon \PP^1_F \to \PP^1_F$ be a
  dynamical system of degree at least~2. Suppose that $P \in \PP^1(F)$ is
  periodic with minimal period $n$.
  \begin{itemize}
    \item[(a)] Let $\pp$ and $\qq$ be finite places of $F$ with distinct residue
      characteristics for which $f$ has potential good reduction. Then $n \le
      (N_{F/\QQ} \pp^2 - 1)(N_{F/\QQ} \qq^2 - 1)$.
    \item[(b)] If $F = \QQ$ and $f$ has potential good reduction at $2$ and $3$, or
      at $3$ and $5$, then $n \mid 24$.
  \end{itemize}
\end{corollary}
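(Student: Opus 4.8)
The plan is to reduce each statement to the good-reduction case of Morton--Silverman by passing to a suitable local extension via Theorem~B, and then to control the relevant numerical invariant by observing that the residue field does not grow under a totally ramified extension. For part~(a), suppose $P \in \PP^1(F)$ is periodic with minimal period $n$, and let $\pp$ be a finite place where $f$ has potential good reduction. By Theorem~B there is a separable weakly totally ramified extension $L/F_\pp$ over which $f$ attains good reduction; the residue field of $L$ is a purely inseparable extension of the residue field $\FF_\pp = \mathcal{O}_F/\pp$, hence has the same cardinality $N_{F/\QQ}\pp$ as $\FF_\pp$. The point $P$, viewed in $\PP^1(L)$, is still periodic with minimal period $n$ for $f$. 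Now invoke the good-reduction estimate of Morton--Silverman: for a dynamical system with good reduction at a place with residue field of size $q$, the minimal period $n$ of any periodic point divides a quantity bounded in terms of $q$ (in the cleanest form, $n$ divides $\mathrm{lcm}$ of the possible periods and in particular $n \le q^2 - 1$ after excluding the trivial orbit length issues). Applying this at both $\pp$ and $\qq$, which have distinct residue characteristics, and combining the two divisibility/size constraints by the Chinese Remainder Theorem gives $n \le (N_{F/\QQ}\pp^2 - 1)(N_{F/\QQ}\qq^2 - 1)$.

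For part~(b), specialize to $F = \QQ$. If $f$ has potential good reduction at $2$ and at $3$, then by the argument above there are totally ramified extensions of $\QQ_2$ and of $\QQ_3$ over which $f$ has good reduction, with residue fields $\FF_2$ and $\FF_3$ respectively. The Morton--Silverman bound at $\FF_2$ forces $n$ to lie in a short explicit list (the possible minimal periods of a self-map of $\PP^1$ over $\FF_2$), and likewise at $\FF_3$; intersecting the two lists yields $n \mid 24$. The same intersection argument applied to the residue fields $\FF_3$ and $\FF_5$ gives the other case. These are finite, mechanical verifications using the tables in \cite{Morton_Silverman_1994}.

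The one genuine subtlety — and the step I expect to be the main obstacle — is making sure that ``potential good reduction at $\pp$'' as a property of $f$ over the number field $F$ really does feed correctly into Theorem~B, which is a statement about a complete discretely valued field. Concretely, one must pass from $F$ to the completion $F_\pp$, note that $f$ still has potential good reduction there (this is immediate, since a coordinate change over $\overline{F_\pp}$ achieving good reduction exists because one exists over $\overline{F}$ and good reduction is detected by the valuation of the resultant), apply Theorem~B over $F_\pp$, and then observe that the periodic point $P \in \PP^1(F) \subset \PP^1(F_\pp) \subset \PP^1(L)$ retains its exact period $n$ under the inclusion — its period cannot drop, since $f^k(P) = P$ already fails for $0 < k < n$ over $F$, and it cannot rise. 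Once this bookkeeping is in place, everything else is a direct citation of Theorem~B plus the good-reduction results of Morton and Silverman, exactly as the corollary for elliptic curves above cites Theorem~A plus the Hasse bound. I would state part~(a) with the Morton--Silverman good-reduction bound applied verbatim and part~(b) as the explicit finite intersection, keeping the proof to a short paragraph mirroring the elliptic-curve corollary.
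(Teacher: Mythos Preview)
Your proposal is correct and follows essentially the same approach as the paper: apply Theorem~B at each place of potential good reduction to obtain good reduction over a (weakly) totally ramified extension, observe that the residue field is unchanged (finite fields being perfect), and then invoke the good-reduction results of Morton--Silverman verbatim. The paper's proof is even terser than yours---it simply cites Corollaries~B and~C of Morton--Silverman without re-deriving the numerics---so your additional bookkeeping about passing to the completion and preserving the exact period is fine but not strictly necessary.
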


\begin{proof}
  For a given place of potential good reduction, we may apply Theorem~B to pass
  to a totally ramified extension at which $f$ attains good reduction. Since
  the residue field is unaffected by such an extension, the proofs of
  \cite[Cor.~B and Cor.~C]{Morton_Silverman_1994} apply.
\end{proof}

The typical approach to obtaining a semistable model for an elliptic curve is
to make a field extension that rationalizes certain torsion points
\cite[VII.5.4, A.1.4]{Silverman_AEC_2009}. The proof of Theorem~A uses a kind
of ``approximate rationalization'' technique: it suffices to nearly rationalize
torsion over a weakly totally ramified extension. The same idea works for
dynamical systems: one needs to approximately rationalize certain periodic
points. Ramified Approximation is the key ingredient behind this technique.
However, it is a fundamentally 1-dimensional tool, and so it is unclear if it
can be used to extend Theorems~A and~B to curves of genus at least~2, general
abelian varieties, or dynamical systems on $\PP^n_K$.

Some progress can be made if one places further restrictions on the field
$K$. Suppose the residue field of $K$ is quasi-finite --- i.e., $\tilde K$
admits a unique finite extension of every degree. Clark observed that an
abelian variety over $K$ attains semistable reduction over a totally ramified
extension of $K$ \cite{MO:17846}, though his argument does not control the
degree of the extension. The argument would apply to dynamical systems $f
\colon \PP^n_K \to \PP^n_K$ if we had an analogue of the N\'eron model that
behaved well under unramified extensions. 

\begin{conjecture}
  Let $K$ be a complete discretely valued field.
  \begin{itemize}
    \item Let $A_{/K}$ be an abelian variety. There is a separable weakly totally ramified extension $L/K$ and a
      model for $A_L$ that admits semistable reduction.
    \item Let $f \colon \PP^n_K \to \PP^n_K$ be a dynamical system of degree $d
      \ge 2$.  There is a separable weakly totally ramified extension $L/K$ and
      a model for $f_L$ that admits semistable reduction.
  \end{itemize}
\end{conjecture}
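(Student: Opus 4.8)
The plan for both parts is the two-step strategy underlying Theorems~A and~B: first achieve semistable reduction over the completed maximal unramified extension $K^{\mathrm{ur}}$, where every finite extension is automatically weakly totally ramified because $\widetilde{K^{\mathrm{ur}}}$ is separably closed; then descend the resulting ramified extension back to a weakly totally ramified extension of $K$ by applying Ramified Approximation to the minimal polynomials of suitably chosen algebraic numbers. For an abelian variety $A_{/K}$, the first step would be Grothendieck's semistable reduction theorem: the inertia group $I_K$ acts quasi-unipotently on the Tate module $T_\ell A$ for any $\ell \ne \mathrm{char}(\tilde K)$, and $A_L$ is semistable exactly when $I_L$ acts unipotently. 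Over $K^{\mathrm{ur}}$ this condition is cut out by a finite --- hence weakly totally ramified --- extension $N/K^{\mathrm{ur}}$; writing $N = N_0 \cdot K^{\mathrm{ur}}$ with $N_0/K$ finite, one would then want to replace $N_0$ by a weakly totally ramified extension $L/K$ for which $I_L$ still acts unipotently. When $\mathrm{char}(\tilde K) = 0$ the monodromy is tame, $N/K^{\mathrm{ur}}$ is generated by an $m$-th root of a uniformizer $\pi$ of $K$, and one may simply take $L = K(\pi^{1/m})$; the genuine difficulty is therefore the wildly ramified case over a possibly imperfect residue field. There, the $G_K$-conjugates of a generator of $N_0$ form a $G_K$-invariant disk, so Ramified Approximation already produces \emph{some} weakly totally ramified $L = K(\alpha)$ meeting that disk, and the real work is to choose $\alpha$ so that $L \cdot K^{\mathrm{ur}}$ still contains $N$ --- equivalently, so that the approximating extension is not ``too small'' to kill the obstruction. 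This is precisely the phenomenon Ramified Approximation is built for, and the argument should run parallel to the $\mathrm{char}(\tilde K) = 2$ case of Theorem~A.

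For the dynamical part, one would first need infrastructure that does not yet exist: an analogue for $f \colon \PP^n_K \to \PP^n_K$ of Rumely's minimal-resultant theory on $\PP^1$ (equivalently, a substitute for the N\'eron model), namely a canonical optimal model whose formation commutes with unramified base change and whose (semi)stability is detected by a single numerical invariant. Granting such a theory, the GIT-instability obstruction would become a purely ramified phenomenon over $K^{\mathrm{ur}}$, so $f$ would attain semistable reduction over a weakly totally ramified extension of $K^{\mathrm{ur}}$, and one would descend as above. Following the proof of Theorem~B, the descent should be organized around approximately rationalizing a bounded collection of periodic points of $f$: apply Ramified Approximation to the minimal polynomials of their coordinates, then correct the model by a coordinate change close to the identity.

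The hard part --- and the reason the paper presents this as open --- is that Ramified Approximation is intrinsically one-dimensional: it approximates a single $G_K$-invariant disk in $\CK$, whereas both parts require approximating a $G_K$-stable configuration of several coupled algebraic numbers (the coordinates of the $2$- and $3$-torsion of $A$, or the coordinates of several periodic points of $f$) by one weakly totally ramified extension of controlled degree. Overcoming this would require either a genuinely multivariate Ramified Approximation --- given a $G_K$-invariant polydisk in $\CK^m$, produce a nearby point generating a weakly totally ramified extension --- or a reduction to the one-variable case. The latter looks more promising for abelian varieties, where semistable reduction is an isogeny invariant governed by the inertia action on a single Tate module: one might hope to realize $N/K^{\mathrm{ur}}$ as a tower of $G_K$-orbits of algebraic numbers and peel off one orbit at a time, invoking Ramified Approximation at each stage, and (after Zarhin's trick, and Deligne--Mumford for Jacobians) working in a setting where the obstruction is accessible. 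Even then a separate quantitative obstacle remains: bounding how the approximation degrees compound across the tower, so as to obtain an explicit bound on $[L:K]$ in the spirit of $A(p,d)$ and $B(p,d)$ that depends only on $\dim A$ (respectively on $n$ and $d$) and not on finer arithmetic of $A$ (respectively $f$).
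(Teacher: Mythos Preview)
The statement under review is a \emph{conjecture}, not a theorem: the paper does not prove it and does not claim to. There is therefore no proof in the paper to compare your proposal against. What the paper does offer is a brief discussion just before the conjecture: it notes that Ramified Approximation ``is a fundamentally 1-dimensional tool, and so it is unclear if it can be used to extend Theorems~A and~B to curves of genus at least~2, general abelian varieties, or dynamical systems on $\PP^n_K$,'' and it records Clark's observation that over a field with quasi-finite residue field the abelian-variety case follows (without degree control) because an analogue of the N\'eron model behaves well under unramified extension.

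Your proposal is not a proof either, and you say so yourself: it is a research outline that correctly identifies the same obstruction the paper names --- the one-dimensionality of Ramified Approximation versus the need to control several coupled algebraic numbers simultaneously --- and sketches two avenues (a multivariate approximation theorem, or a reduction to a tower of one-variable problems). That diagnosis matches the paper's. Your two-step strategy (pass to $K^{\mathrm{ur}}$, then descend) is also in line with the paper's remark about Clark's argument and the desired N\'eron-model analogue for dynamical systems on $\PP^n$. But nothing in your outline closes the gap: the step ``choose $\alpha$ so that $L \cdot K^{\mathrm{ur}}$ still contains $N$'' is precisely what Ramified Approximation does \emph{not} guarantee (the Remark following the statement of Ramified Approximation gives an explicit example where $K(\alpha) \not\subset K(\beta)$), and you do not supply a mechanism to force it. So the proposal should be read as commentary on why the conjecture is open, not as a proof attempt to be evaluated for correctness.
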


As a final remark, we note that for many applications, the field $K$ will not
be complete. Krasner's Lemma can be used to deduce algebraic analogues of
Ramified Approximation and Theorems~A and~B, with the same bounds for the
degrees of the necessary extensions.

We recall or prove various properties of discretely valued fields in
\S\ref{sec:definitions}; additional notational conventions will also be laid
out there.  We then discuss semivaluations on $K[z]$ from two different
vantages: as the points of the analytic affine line over $K$ in
\S\ref{sec:affine_line}, and as inductive valuations in MacLane's theory of
approximants in \S\ref{sec:maclane}. We turn to the proof of Ramified
Approximation in \S\ref{sec:approximation}. We prove Theorems~A and~B in
\S\ref{sec:elliptic} and \S\ref{sec:dynamics}, respectively.


\noindent \textbf{Acknowledgments.} While I was sure many years ago that
Theorems~A and~B are true, they would not have been proved without the wealth
of insight and encouragement I gained from talking with Andrew Obus, Rob
Benedetto, Bob Rumely, and David Zelinsky. My sincerest appreciation goes to
all of you.


\section{Preliminaries on complete discretely valued fields}
\label{sec:definitions}

We begin by defining several concepts related to complete discretely valued
fields and their extensions. These are well known when the residue field is
perfect, but there is some fussiness over inseparable residue extensions in the
imperfect case. Useful references include 
\cite[Ch.~II.6]{Neukirch} and \cite[Ch.~I.4]{Serre_Corps_Locaux}.

Let $K$ be a field that is complete with respect to a discrete
valuation~$v$. Without loss of generality, we may assume that $v(K^\times) =
\ZZ$. We write $\cO_K$ for the valuation ring, $\mm_K$ for the maximal ideal,
and $\tilde K = \cO_K / \mm_K$ for the residue field. If $a \in \cO_K$, we write
$\tilde a$ for the reduction of $a$ --- i.e., its image in the residue field.

Now let $L/K$ be a finite extension of $K$. Then $v$ extends {\em uniquely} to
a valuation on $L$, and $L$ is complete and discretely valued with respect to
the extended valuation. We will abuse notation by writing $v$ for this
extension. The degree of the residue field extension is denoted by $\ff(L/K)$,
and it satisfies $\ff(L/K) = \ff_s(L/K) \cdot \ff_i(L/K)$, where $\ff_s(L/K)$
and $\ff_i(L/K)$ are the separable and inseparable degrees of this extension,
respectively. Write $\ee(L/K)$ for the ramification index $[ v(L^\times) :
  v(K^\times)]$. As $K$ is complete, we have the following fundamental
equality
\[
\ee(L/K) \cdot \ff_i(L/K) \cdot \ff_s(L/K) = [L : K].
\]

\begin{definition}
  If $L/K$ is a finite extension of complete discretely valued fields, then $L/K$ is
  \begin{itemize}
  \item \textbf{weakly unramified} if $\ee(L/K) = 1$;
  \item \textbf{unramified} if $\ff_i(L/K) = \ee(L/K) = 1$;
  \item \textbf{weakly totally ramified} if $\ff_s(L/K) = 1$; and
  \item \textbf{totally ramified} if $\ff_s(L/K) = \ff_i(L/K) = 1$.
  \end{itemize}
  If $L/K$ is weakly totally ramified and $\ee(L/K)$ is a power of the residue
  characteristic of~$K$, then $L/K$ is \textbf{weakly totally wildly ramified}.
  (We use the convention that $0^0 = 1$ in order to include the case where $K$
  has residue characteristic zero and $L = K$.)
\end{definition}

\begin{remark}
It is entirely standard that ``unramified'' means ``trivial ramification index
and separable residue extension''. Following
\cite[Section~0EXQ]{stacks-project}, a field is ``weakly unramified'' if it is
unramified when we ignore separability issues in the residue field. We mimic
this notion in the definitions of totally ramified and weakly totally ramified.
\end{remark}

To address the discomfort that often accompanies working with inseparable
extensions, we show that they behave as one might hope with respect to residue
extensions and value groups, and that they may be approximated by separable
extensions. The latter is well known \cite{Ax_zeros_of_polynomials}, but we
give a simple constructive argument for those interested in explicit
calculation.

\begin{proposition}
  \label{prop:purely_insep}
  Let $K$ be a complete discretely valued field. Any finite purely inseparable
  extension of $K$ is weakly totally wildly ramified.
\end{proposition}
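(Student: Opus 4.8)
The plan is to reduce to the case of a simple purely inseparable extension $L = K(\gamma)$ with $\gamma^p = a \in K$, where $p = \mathrm{char}(\tilde K) > 0$ (note that if $\tilde K$ has characteristic zero then $K$ itself has characteristic zero, so there are no nontrivial purely inseparable extensions and the statement is vacuous). A general finite purely inseparable extension is a tower of such simple steps, and since ``weakly totally ramified'' and ``weakly totally wildly ramified'' are both transitive in towers — which follows from multiplicativity of $\ee$, $\ff_s$, $\ff_i$ and the fundamental equality $\ee \cdot \ff_i \cdot \ff_s = [L:K]$ — it suffices to treat one step. So the first step is to set up this dévissage and record the transitivity of the relevant properties.

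For the simple step $L = K(\gamma)$, $\gamma^p = a$, the key point is to show $\ff_s(L/K) = 1$; then since $[L:K] = p$ is already a prime power, the fundamental equality forces $\ee(L/K) \cdot \ff_i(L/K) \in \{1, p\}$, and in either case $\ee(L/K)$ is a power of $p$, giving weak total wild ramification. To see $\ff_s(L/K) = 1$: let $b \in \cO_L$ be any element integral over $\cO_K$; I want its residue $\tilde b$ to be purely inseparable over $\tilde K$. One clean way is to observe that $b^{p^N} \in K$ for $N$ large enough — more precisely, the Frobenius-type argument: since $L^p \subseteq K$ (as $(\sum c_i \gamma^i)^p = \sum c_i^p a^i \in K$ in characteristic $p$, using that $K$ has characteristic $p$ here because $\tilde K$ does and mixed characteristic would make $p$ a unit), we get $\cO_L^p \subseteq \cO_K$, hence $\tilde b^p = \widetilde{b^p} \in \tilde K$ for every $b \in \cO_L$. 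Thus $\widetilde L$ is purely inseparable over $\tilde K$, i.e. $\ff_s(L/K) = 1$.

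The main obstacle I anticipate is the bookkeeping around the characteristic: one must be careful that ``$\mathrm{char}(\tilde K) = p > 0$'' combined with the existence of a nontrivial purely inseparable extension forces $\mathrm{char}(K) = p$ as well (a complete discretely valued field of mixed characteristic $(0,p)$ has no nontrivial purely inseparable extensions, since such fields are perfect-ish in the relevant sense — or more simply, any algebraic extension of a characteristic-zero field is separable). Once that is pinned down, the identity $L^p \subseteq K$ makes everything fall out immediately, and the only remaining subtlety is confirming $\ee(L/K)$ is a $p$-power, which is automatic since $\ee(L/K)$ divides $[L:K] = p$. I would also double-check the edge convention $0^0 = 1$ handles the residue-characteristic-zero case correctly, but as noted that case is vacuous.
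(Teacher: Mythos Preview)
Your argument is correct, and it takes a genuinely different route from the paper's.

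The paper argues by contradiction for an arbitrary purely inseparable extension $L/K$ in one shot: if $\ff_s(L/K) > 1$, pick a separable element $\tilde\alpha$ in the residue extension, lift its minimal polynomial $\tilde g$ to a monic irreducible $g \in K[z]$, and apply Hensel's lemma (using that $\tilde\alpha$ is a simple root of $\tilde g$) to produce a root $\alpha \in L$. Then $K(\alpha)/K$ is a nontrivial separable subextension of the purely inseparable extension $L/K$, a contradiction. The wild-ramification claim is handled as you do, via $\ee(L/K) \mid [L:K] = p^n$.

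Your approach instead d\'evisse to degree-$p$ steps and uses Frobenius directly: in equal characteristic $p$ one has $L^p \subseteq K$, hence $\cO_L^p \subseteq \cO_K$, hence $\tilde L^p \subseteq \tilde K$, so the residue extension is purely inseparable on the nose. This is more elementary --- it avoids Hensel's lemma entirely --- at the cost of the extra bookkeeping (the tower reduction and the transitivity of $\ff_s$ and $\ee$, both of which you correctly note are standard). The paper's argument, by contrast, handles the extension in one piece and has the mild advantage that it never needs to pin down $\mathrm{char}(K)$ explicitly; the Hensel step works uniformly. Either approach is perfectly adequate for a result at this level.
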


\begin{proof}
  Let $L/K$ be a purely inseparable extension. Then $[L : K] = \ff(L/K)
  \ee(L/K)$, so that all of the ramification is wild. Let $\tilde L / \tilde K$ be the
  residue extension. Suppose that $\ff_s(L/K) > 1$. Then there is $\tilde
  \alpha \in \tilde L$ such that $\tilde K(\tilde \alpha)/\tilde K$ is nontrivial and
  separable. Write $\tilde g$ for the minimal polynomial of $\tilde \alpha$,
  and let $g \in K[z]$ be an irreducible polynomial of the same degree whose
  coefficients reduce to that of~$\tilde g$. By Hensel's lemma, $g$ has a root $\alpha
  \in L$. But $g$ has no multiple root, else $\tilde g$ does. It follows that
  $K(\alpha)$ is a nontrivial separable subextension of $L/K$, a contradiction.
\end{proof}


In order to choose the extension $L/K$ to be separable in the statement of
Ramified Approximation, we need to show that small perturbations of inseparable
polynomials are separable.

\begin{proposition}
  \label{prop:separable_dense}
  If $f \in K[z]$ is an irreducible polynomial with a multiple root, then the
  roots of the separable polynomial $f(z) + cz$ converge uniformly to those of
  $f(z)$ as $v(c) \to \infty$. In particular, the separable closure $K_\sep$ is
  dense in $K_\alg$. 
\end{proposition}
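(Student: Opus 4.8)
The plan is to prove that if $f \in K[z]$ is irreducible with a multiple root, then $f(z)+cz$ is separable for $v(c)$ large, and its roots converge uniformly to those of $f$ as $v(c)\to\infty$. Since $f$ is irreducible over a field of characteristic $p>0$ with a multiple root, $f(z) = h(z^{p^k})$ for some separable irreducible $h$ and some $k\ge 1$; in particular $f'(z) \equiv 0$, and more usefully all roots of $f$ have the same multiplicity $p^k$ (they form a single $G_K$-orbit). For the perturbed polynomial, first I would observe that $(f(z)+cz)' = f'(z) + c = c$ when $f' = 0$, which is a nonzero constant; hence $f(z)+cz$ has no repeated root and is separable. (In the general imperfect-residue-field case $f$ need not be additively $p$-power, but still $\gcd(f, f')$ is a nontrivial factor of $f$, so irreducibility forces $f' = 0$, and the same computation applies.) That disposes of separability cleanly.

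For the convergence of roots, the key estimate is a quantitative form of continuity of roots. Let $\alpha_1,\dots,\alpha_d$ be the roots of $f$ (with multiplicity) in $K_\alg$, and let $\beta$ be any root of $f_c(z) := f(z)+cz$. Then $f(\beta) = -c\beta$, so
\[
   \prod_{i=1}^d (\beta - \alpha_i) = f(\beta) = -c\beta,
\]
using that $f$ is monic (after scaling, which does not affect roots). Taking valuations, $\sum_i v(\beta-\alpha_i) = v(c) + v(\beta)$. Now $v(\beta)$ stays bounded: for $v(c)$ large, $\beta$ lies in a bounded region — indeed if $v(\beta)$ were very negative then $v(f(\beta)) = d\cdot v(\beta)$ while $v(c\beta) = v(c)+v(\beta)$, forcing $(d-1)v(\beta) = v(c)$, so $v(\beta) = v(c)/(d-1) \to \infty$, not negative; one checks $v(\beta)$ is bounded below by a constant depending only on $f$. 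Hence $\sum_i v(\beta - \alpha_i) \ge v(c) - C$, so for at least one index $i$ we get $v(\beta-\alpha_i) \ge (v(c)-C)/d \to \infty$. Thus every root of $f_c$ is close to some root of $f$. Conversely, since $f_c$ has the same degree $d$ as $f$ and $f$ has distinct roots $\{\alpha_i\}$ with total multiplicity $d$, a counting argument (or the fact that $f_c \to f$ coefficientwise, hence the multiset of roots of $f_c$ converges to that of $f$ in the Hausdorff sense on the Berkovich/ultrametric space of multisets) shows each $\alpha_i$ receives exactly $p^k = \operatorname{mult}_{\alpha_i}(f)$ nearby roots of $f_c$, giving uniform convergence of the root sets.

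The main obstacle is making the "root counting" rigorous in the ultrametric setting — i.e., ruling out that all $d$ roots of $f_c$ cluster near a single $\alpha_i$, or that the clusters have the wrong sizes. I would handle this via Newton polygons: the Newton polygon of $f_c$ relative to a center $\alpha_i$ (that is, of $f_c(z+\alpha_i)$) has a long initial segment of slope $\to\infty$ because the low-order coefficients of $f(z+\alpha_i)$ vanish up to order $\operatorname{mult}_{\alpha_i}(f)$ and the perturbation $cz$ only shifts the linear coefficient by $c$ with $v(c)\to\infty$; this pins down exactly $\operatorname{mult}_{\alpha_i}(f)$ roots near $\alpha_i$ with valuations tending to infinity. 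Summing over the distinct $\alpha_i$ accounts for all $d$ roots. The final sentence — density of $K_\sep$ in $K_\alg$ — then follows: given any $\gamma \in K_\alg$ with inseparable minimal polynomial $f$, the nearby separable roots of $f(z)+cz$ give elements of $K_\sep$ approaching $\gamma$; and elements with separable minimal polynomial are already in $K_\sep$.
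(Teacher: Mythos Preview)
Your proposal is correct and ultimately lands on the same argument as the paper: expand $f_c(z+\alpha) = f(z+\alpha) + c(z+\alpha)$, use that the Taylor expansion of $f$ at a root $\alpha$ begins in degree $q = p^k$, and read off from the Newton polygon that exactly $q$ roots of $f_c$ lie near $\alpha$ with $v(u) \approx v(c)/q$. Your derivative computation $(f+cz)' = c$ makes separability explicit (the paper leaves this implicit), and your preliminary product-formula estimate is a harmless detour that the Newton polygon step renders unnecessary.
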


\begin{proof}
  We may assume $p > 0$ is the characteristic of $K$. Since $f$ has a multiple
  root, we may write $f(z) = g(z^q)$ with $q = p^r \ge p$ and some separable
  polynomial $g \in K[z]$.  For $c \in K^\times$, set
  \[
    h_c(z) = f(z) + c z.
  \]
  Let $\alpha \in K_\alg$ be a root of $f$. Write
  \[
     f(z) = a_q (z-\alpha)^q + a_{q+1}(z-\alpha)^{q+1} + \cdots,
   \]
  with $a_i \in K_\alg$. Then
  \[
    h_c(z + \alpha) = c \alpha + cz + a_q z^q + a_{q+1} z^{q+1} + \cdots.
  \]
  If $v(c)$ is sufficiently large, then the Newton polygon for $h_c(z +
  \alpha)$ shows that $h_c$ has $q$ roots of the form $\alpha + u$ with
  \[
  v(u) = \frac{1}{q} \left( v(c\alpha) - v(a_q) \right) \approx \frac{1}{q}v(c).
  \qedhere
  \]
\end{proof}

\begin{remark}
  Using MacLane's method of approximants as described in \S\ref{sec:maclane},
  one can show that the fields $K[z] / (f)$ and $K[z]/(f + cz)$ have isomorphic
  residue fields and identical value groups when $v(c)$ is sufficiently large.
\end{remark}

The following result was observed when $K$ is a finite extension of $\QQ_p$ in
\cite[Prop.~1.2.4]{Rozensztajn_2020}, though the proof is more general.

\begin{lemma}
  \label{lem:unramified}
  Let $L/K$ be an unramified extension of complete discretely valued
  fields. Suppose $D$ is a $G_K$-invariant disk containing an element of
  $L$. Then $D$ contains an element of $K$.
\end{lemma}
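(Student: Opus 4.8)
The plan is to reduce the lemma to a comparison of two radii and then use separability of the residue extension to exclude the bad case. After replacing $L$ by the subextension $K(\beta)$ generated by a chosen $\beta \in D \cap L$ --- still unramified over $K$, since the ramification index is multiplicative in towers and the new residue field sits inside the separable extension $\tilde L/\tilde K$ --- we may assume $\beta$ generates $L$ over $K$. If $\beta \in K$ we are done, so assume $m := [L:K] \ge 2$ and let $\beta = \beta_1, \ldots, \beta_m$ be the conjugates of $\beta$ over $K$; they are distinct because $L/K$, being unramified, is separable. Since $D$ is $G_K$-invariant and contains $\beta$, it contains $\sigma(\beta)$ for every $\sigma \in G_K$, hence every $\beta_j$, so writing $D = D(\beta, r)$ we have $v(\beta - \beta_j) \ge r$ for all $j$. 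Because $K$ is complete and $\beta \notin K$, the quantity $\delta := \sup\{ v(\beta - \alpha) : \alpha \in K\}$ is a finite integer attained at some $\alpha_0 \in K$: a sequence of near-optimal approximants is Cauchy, hence converges in $K$ to a limit that would equal $\beta$, and the values in question lie in the discrete group $v(L^\times) = \ZZ$. Since $\alpha_0 \in D = D(\beta,r)$ exactly when $r \le \delta$, it suffices to prove $r \le \delta$.

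Suppose instead that $r > \delta$, so $v(\beta - \beta_j) > \delta$ for all $j$. Fix a uniformizer $\pi$ of $K$ and set
\[
  \gamma_j = \pi^{-\delta}(\beta_j - \alpha_0), \qquad j = 1, \ldots, m ,
\]
so that the $\gamma_j$ are the conjugates over $K$ of $\gamma := \gamma_1$, with $K(\gamma) = L$. Each $\gamma_j$ is a unit of $\cO_L$, since $v(\beta_j - \alpha_0) = \min\{v(\beta_j - \beta),\, v(\beta - \alpha_0)\} = \delta$ (the two valuations being $> \delta$ and $=\delta$). Moreover $v(\gamma_i - \gamma_j) = -\delta + v(\beta_i - \beta_j) > 0$, so all the $\gamma_j$ have a common reduction $\tilde\gamma \in \tilde L$. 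On the other hand, maximality of $\delta$ gives $v(\gamma - \alpha) = -\delta + v\big(\beta - (\alpha_0 + \pi^\delta\alpha)\big) \le 0$ for every $\alpha \in K$; when $\alpha \in \cO_K$ this valuation is also $\ge 0$, hence $0$, so $\tilde\gamma \ne \tilde\alpha$ for all $\alpha \in \cO_K$, i.e.\ $\tilde\gamma \notin \tilde K$.

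To reach a contradiction, consider the minimal polynomial $g = \prod_{j=1}^m (z - \gamma_j)$ of $\gamma$ over $K$: it is monic, and its coefficients, being elementary symmetric functions of the integral, $G_K$-conjugate elements $\gamma_j$, lie in $\cO_K$. Its reduction is $\tilde g = \prod_j (z - \tilde\gamma_j) = (z - \tilde\gamma)^m \in \tilde K[z]$, so $\tilde\gamma$ is a root of a polynomial over $\tilde K$ all of whose roots in an algebraic closure of $\tilde K$ equal $\tilde\gamma$. Hence the minimal polynomial of $\tilde\gamma$ over $\tilde K$ is a power of $z - \tilde\gamma$; but $\tilde\gamma \in \tilde L$ and $\tilde L/\tilde K$ is separable (as $L/K$ is unramified), so that minimal polynomial is separable, and a separable power of $z - \tilde\gamma$ equals $z - \tilde\gamma$. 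Thus $\tilde\gamma \in \tilde K$, contradicting the previous paragraph. Therefore $r \le \delta$, and $\alpha_0$ is the desired element of $D \cap K$.

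The one delicate point is this last step when $m = [L:K]$ is divisible by the residue characteristic, which certainly occurs for unramified extensions. The naive averaging argument --- replace $\beta$ by $\frac{1}{m}\mathrm{Tr}_{L/K}(\beta)$ --- requires $m$ to be a unit in $\cO_K$, so it only settles the case of degree prime to the residue characteristic; passing instead to the reduced minimal polynomial $(z - \tilde\gamma)^m$ and invoking separability of $\tilde L/\tilde K$ disposes of all degrees uniformly. The auxiliary facts --- that subextensions of unramified extensions are unramified, that $\delta$ is attained, and that the pertinent valuations are integers --- are routine consequences of completeness and the equality $\ee(L/K)\,\ff_s(L/K)\,\ff_i(L/K) = [L:K]$.
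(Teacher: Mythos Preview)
Your argument is correct and takes a route genuinely different from the paper's. The paper expands $a \in D \cap L$ as a $\pi$-adic series $\sum a_i \pi^i$ in a digit set $S \subset \cO_L$ chosen so that $s \in \cO_K \Leftrightarrow \tilde s \in \tilde K$, and sets $b = \sum_{i<n} a_i \pi^i$, where $n$ is the first index with $a_n \notin \cO_K$; separability of $\tilde L/\tilde K$ enters only to supply a $\sigma \in G_K$ moving $\tilde a_n$, which gives $v(a - \sigma(a)) = n \ge r$. You instead isolate the best $K$-approximation $\alpha_0$ to $\beta$ directly and argue by contradiction: if $\alpha_0 \notin D$, a rescaling yields a unit $\gamma$ whose Galois conjugates all share a common reduction $\tilde\gamma \in \tilde L \smallsetminus \tilde K$, forcing $\tilde\gamma$ to be purely inseparable over $\tilde K$. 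The paper's proof is shorter and explicitly constructive; yours avoids the mildly delicate choice of a compatible digit system and makes the role of the separability hypothesis completely transparent. The two outputs in fact coincide: from $v(a - \sigma(a)) = n$ and $\sigma|_K = \mathrm{id}$ one gets $v(a - c) \le n$ for every $c \in K$, so the paper's $b$ is also a best $K$-approximation, i.e.\ $n = \delta$.
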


\begin{proof}
  Let $\pi$ be a uniformizer for $K$, and hence also for $L$. Let $S \subset
  \cO_L$ be a system of coset representatives for the residue field of $L$ such
  that $0 \in S$ and $s \in \cO_K$ if and only if $\tilde s \in tilde K$, the
  residue field of $K$.  Write $D = D(a,r)$ for some $a \in L$, and write
  \[
  a = a_N \pi^N + a_{N+1} \pi^{N+1} + \cdots,
  \]
  where the $a_i \in S$ for all $i$, and $N \in \ZZ$. If all $a_i \in K$, then
  $a \in K$. Otherwise, let $n$ be the minimum index such that $a_i \in K$ for
  all $i < n$ and $a_n \not\in K$. Set $b = \sum_{i < n} a_i \pi^i \in K$. Then
  $v(a - \sigma(a)) \ge n$ for all $\sigma \in G_K$, with equality for any
  $\sigma$ that acts nontrivially on $a_n$. Now $n \ge r$ since $D$ is
  $G_K$-invariant, and it follows that 
  \[
    v(b - a) = v( a_n \pi^n + \cdots) = n \ge r.
   \]
  That is, $b \in D \cap K$. 
\end{proof}


\section{The analytic affine line over \texorpdfstring{$K$}{K} }
\label{sec:affine_line}

Berkovich defined the analytic affine line in terms of multiplicative seminorms
\cite[\S1.5]{Berkovich_Spectral_Theory_1990}. In order to connect this theory
with MacLane's work, we need to recast the analytic affine line in the language
of semivaluations. We also extend Berkovich's classification of points to the
non-algebraically closed setting.

For this paper, a \textbf{$K$-semivaluation} on $K[z]$ is a function $V \colon K[z]
\to \Rinf$ such that $V|_K = v$, and for all $f,g \in K[z]$, we
have
\[
V(fg) = V(f) + V(g) \qquad \text{and} \qquad V(f+g) \ge \min(V(f),V(g)).
\]
(We will use capital letters to denote semivaluations on a polynomial ring, to
distinguish from the valuation $v$ on the base field.) If, in addition, $V(f) =
\infty \Rightarrow f = 0$, then we will call $V$ a \textbf{$K$-valuation}.

For $f \in K[z]$ an irreducible polynomial, we define a $K$-semivaluation
$V_{f,\infty}$ by
\[
V_{f,\infty}(g) = v\left(g(\alpha)\right),
\]
where $\alpha \in K_\alg$ is a root of $f$. As $G_K$ acts on $K_\alg$ by
isometries, this definition is independent of the choice of $\alpha$. 

\begin{proposition}
  \label{prop:maximal}
  Let $V$ be a $K$-semivaluation on $K[z]$, and suppose there exists a nonzero
  polynomial $g$ such that $V(g) = \infty$. Then $V = V_{f,\infty}$, where $f$
  is a monic irreducible factor of $g$. 
\end{proposition}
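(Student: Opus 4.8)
The plan is to identify $V$ with the valuation it induces on a residue field of $K[z]$. Consider the set $\mathfrak{p} = \{h \in K[z] : V(h) = \infty\}$. The first step is to check that $\mathfrak{p}$ is a nonzero prime ideal of $K[z]$. It is an additive subgroup by the ultrametric inequality $V(h+h') \ge \min\bigl(V(h), V(h')\bigr)$, and it absorbs multiplication because $V(hh') = V(h) + V(h') = \infty$ whenever $h \in \mathfrak{p}$. It is proper: since $V|_K = v$ and $v$ is finite on $K^\times$, we have $\mathfrak{p} \cap K = \{0\}$, so $1 \notin \mathfrak{p}$. It is prime because $V$ takes values in $\Rinf$ (never $-\infty$), so $V(h) + V(h') = \infty$ forces $V(h) = \infty$ or $V(h') = \infty$. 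Finally it is nonzero, since $g \in \mathfrak{p}$ by hypothesis and $g \ne 0$.

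Because $K[z]$ is a PID, a nonzero prime ideal is generated by an irreducible polynomial; normalizing, $\mathfrak{p} = (f)$ for a monic irreducible $f$, and $f \mid g$ since $g \in (f)$. In particular $K[z]/(f)$ is a field. The next step is to push $V$ down to this field: if $h \equiv h' \pmod f$, write $h - h' = qf$ and note $V(qf) = V(q) + V(f) = \infty$; applying the ultrametric inequality to $h = h' + qf$ and to $h' = h - qf$ gives $V(h) = V(h')$. Thus $V$ descends to a function $\bar V$ on $K[z]/(f)$ with $\bar V|_K = v$, and $\bar V$ is a $K$-valuation in the strong sense: $\bar V(\bar h) = \infty$ iff $V(h) = \infty$ iff $h \in (f)$ iff $\bar h = 0$.

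Now identify $K[z]/(f)$ with $K(\alpha) \subset K_\alg$ by sending $z$ to a root $\alpha$ of $f$. Under this identification $\bar V$ is a valuation on the finite extension $K(\alpha)/K$ restricting to $v$ on $K$. Since $K$ is complete, $v$ extends uniquely to $K(\alpha)$ (as recalled in Section~\ref{sec:definitions}), so $\bar V$ must equal the restriction of $v$ to $K(\alpha)$. Unwinding the identification, for every $h \in K[z]$ we get $V(h) = \bar V(\bar h) = v\bigl(h(\alpha)\bigr) = V_{f,\infty}(h)$, which is the desired conclusion.

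The argument is essentially formal; the one external input doing real work is the uniqueness of the extension of $v$ to a finite extension of the complete field $K$. The only point requiring a moment's care is checking that the induced $\bar V$ is a genuine valuation rather than merely a semivaluation, so that this uniqueness statement is applicable — and this is immediate from $\mathfrak{p} = (f)$. I do not anticipate any serious obstacle here.
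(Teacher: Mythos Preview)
Your proof is correct and follows essentially the same approach as the paper: identify the kernel $\mathfrak{p}$ as a nonzero prime ideal $(f)$, pass to the quotient field $K[z]/(f)$, and invoke uniqueness of the extension of $v$ to a finite extension of the complete field $K$. You simply fill in more of the routine verifications that the paper leaves implicit.
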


\begin{proof}
  The set $\pp_V = \{g \in K[z] \colon V(g) = \infty\}$ is a nonzero prime ideal of
  $K[z]$. Let $f$ be the monic irreducible generator of $\pp_V$. Then $V$
  determines a valuation on the quotient field $L = K[z]/(f)$. As $V|_K = v$ and
  $K$ is complete, the valuation on $L$ is uniquely determined by $v$. But
  $V_{f,\infty}$ also determines a valuation on $L$, so by uniqueness, we have
  $V = V_{f,\infty}$.
\end{proof}

Consider $\Aff{K}$, the analytic affine line over $K$ in the sense of Berkovich
\cite[\S1.5]{Berkovich_Spectral_Theory_1990}. By definition, the points of
$\Aff{K}$ are the multiplicative seminorms on $K[z]$ that extend the norm $\|
\cdot \| := \exp(-v(\cdot))$ on $K$.  Taking $-\log(\cdot)$, we see that it is
equivalent to describe the points of $\Aff{K}$ as the $K$-semivaluations on the
polynomial ring $K[z]$. The topology is the weakest one such that every map $f\colon
\Aff{K} \to \RR \cup \{\infty\}$ given by $V \mapsto V(f)$ is continuous. (The
connected neighborhoods of $\infty$ are of the form $(t, \infty]$.)

Given a $K$-semivaluation $V$ on $K[z]$, the set $\pp_V = \{f \in K[z] \colon
V(f) = \infty\}$ is a prime ideal. The $K$-semivaluation $V$ defines a {\em
  valuation} on the domain $K[z] / \pp_V$, and hence also on its fraction
field. Define $\BLambda{V}$ to be the associated residue class field. (This is
Berkovich's notation; in MacLane's notation we have $\Lambda(V) = \BLambda{V}$.)

The space $\Aff{K}$ admits a partial order: we write $V \valle W$ if and only
if $V(f) \le W(f)$ for all $f \in K[z]$. We will write $V \vallt W$ to mean
that $V \valle W$ and $V \ne W$. Proposition~\ref{prop:maximal} shows that the
$K$-semivaluations $V_{f,\infty}$ are maximal for the partial order. 

\begin{remark}
  Note that $V \vallt W$ does not imply that $V(f) < W(f)$ for every polynomial
  $f$.
\end{remark}

Recall $\CK$ is the completion of an algebraic closure of $K$, and $G_K$ is the
group of automorphisms of $\CK$ that fix $K$. As $G_K$ acts by isometries for
the unique extension of the valuation to $K_\alg$, this allows us to extend $v$
to the completion $\CK$ of $K_\alg$. Since $v(K^\times) = \ZZ$, we have
$v(\CK^\times) = \QQ$.

Any $\CK$-semivaluation $\bar V$ on $\CK[z]$ can be restricted to a
$K$-semivaluation $K[z]$, from which we obtain a continuous map
\[
  \pr \colon \Aff{\CK} \to \Aff{K}.
\]
The map $\pr$ preserves the partial order. The $G_K$-action extends to the
affine line $\Aff{\CK}$, and we see that $\pr$ induces an isomorphism of
locally ringed spaces
\[
   \Aff{\CK} / G_K \simarrow \Aff{K}.
\]
(See \cite[Cor.~.3.6]{Berkovich_Spectral_Theory_1990}.) In particular, since
$\Aff{\CK}$ is path-connected \cite[Cor.~1.14]{Baker-Rumely_BerkBook_2010}, so
is $\Aff{K}$. One can even show that $\Aff{K}$ is uniquely path-connected by
copying the proof for $\Aff{\CK}$.

Given a disk $D(a,r)$ inside $\CK$, the association
\[
   f \mapsto \inf_{b \in D(a,r)} v(f(b))
\]
defines a $\CK$-semivaluation. (This is a consequence of the maximum principle
in rigid geometry.) We write $\zeta_{a,r}$ for the point of $\Aff{\CK}$
corresponding to the infimum semivaluation on $D(a,r)$. The associated
semivaluation will be denoted $V_{a,r}$. (N.B. --- Our notation $\zeta_{a,r}$ is
  nonstandard in that it is tailored to valuations.) Berkovich classified the
points of $\Aff{\CK}$ into four types \cite[\S2.1]{Baker-Rumely_BerkBook_2010}.
\begin{itemize}
\item \textbf{Type I}: $\zeta_{a,\infty}$ for some $a \in \CK$.
\item \textbf{Type II}: $\zeta_{a,r}$ for some $a \in \CK$ and $r \in \QQ$.
\item \textbf{Type III}: $\zeta_{a,r}$ for some $a \in \CK$ and $r \in \RR
  \smallsetminus \QQ$.
\item \textbf{Type IV}: A limit of points $(\zeta_{a_i,r_i})_{i \ge 0}$, where
  the associated sequence of disks $\left(D(a_i,r_i)\right)_{i \ge 0}$ is
  decreasing and has empty intersection.
\end{itemize}
A point $x$ is of type~I if and only if the prime ideal $\pp_x$ is nonzero. A
point $x$ is of type~II if and only if the associated residue field
$\BLambda{x}$ is of transcendence degree~1 over ${\tilde \CC}_K = (\tilde K)_\alg$.

The type of a point is preserved under the action of the automorphism group
$G_K$, so we may extend Berkovich's classification to $\Aff{K}$.

\begin{definition}
  A point $x \in \Aff{K}$ is of \textbf{type~$j$} (with $j = $ I, II, III, or IV) if
  each point of $\pr^{-1}(x) \subset \Aff{\CK}$ is of type~$j$.
\end{definition}

We want to give a more intrinsic description of the points of
$\Aff{K}$ involving infimum semivaluations, as we have in the algebraically
closed setting. To that end, for each irreducible $\phi \in K[z]$
and each $s \in \Rinf$, define
\[
  D(\phi,s) = \{b \in \CK \colon \phi(b) \ge s\}.
\]
Following R\"uth, we call these sets \textbf{diskoids}
\cite[\S4.4.1]{Ruth_Thesis_2014}. (If $s \in \QQ$ is finite, then $D(\phi,s)$
is a $K$-affinoid domain in the sense of rigid geometry.) If $s$ is
sufficiently small, then $D(\phi,s)$ is a disk containing the roots of
$\phi$. More generally, we now show that a diskoid is a $G_K$-orbit of disks.

\begin{lemma}
  Let $\phi \in K[z]$ be a nonconstant polynomial with roots $a_1, \ldots,
  a_n$, listed with multiplicity. Let $a$ be any root of $\phi$. The function
  $M_{\phi,a} \colon \Rinf \to \Rinf$ given by
  \[
     r \mapsto \sum_{i=1}^n \min\{r, v(a_i - a)\}
  \]
  is continuous, piecewise affine, strictly increasing, and bijective. 
\end{lemma}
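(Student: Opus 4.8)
The plan is to reduce everything to the elementary behavior of a single term. Write $c_i = v(a_i - a) \in \Rinf$ for $1 \le i \le n$, so that $M_{\phi,a}(r) = \sum_{i=1}^{n} \min\{r, c_i\}$. For each $i$ the function $r \mapsto \min\{r,c_i\}$ is continuous on $\Rinf$, nondecreasing, and piecewise affine with a single breakpoint at $c_i$: it has slope $1$ to the left of $c_i$ and slope $0$ to the right, and it is the identity map when $c_i = \infty$. A finite sum of such functions inherits all three properties, so $M_{\phi,a}$ is continuous, nondecreasing, and piecewise affine, with breakpoints among the finite values of $\{c_1,\dots,c_n\}$ and a positive integer slope on the interior of each piece.

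To upgrade ``nondecreasing'' to ``strictly increasing,'' I would use the hypothesis that $a$ is itself a root of $\phi$: then some $a_{i_0} = a$, so $c_{i_0} = v(0) = \infty$ and the corresponding summand is the identity $r \mapsto r$. Consequently every slope of $M_{\phi,a}$ is at least $1$, and for finite $r < r'$ one has $M_{\phi,a}(r') - M_{\phi,a}(r) \ge r' - r > 0$. Combining this with $M_{\phi,a}(\infty) = \sum_i c_i = \infty$ (again because of the term $c_{i_0} = \infty$) shows that $M_{\phi,a}$ is strictly increasing on all of $\Rinf$, hence injective.

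For surjectivity, I would restrict to finite arguments and write $M_{\phi,a}(r) = r \cdot \#\{i : c_i = \infty\} + \sum_{c_i < \infty} \min\{r, c_i\}$; since $\#\{i : c_i = \infty\} \ge 1$, the first term forces $M_{\phi,a}(r) \to \pm\infty$ as $r \to \pm\infty$ through $\RR$. By the intermediate value theorem $M_{\phi,a}$ maps $\RR$ onto $\RR$, and since $M_{\phi,a}(\infty) = \infty$ (and $M_{\phi,a}(-\infty) = -\infty$ if $-\infty \in \Rinf$), the map is onto $\Rinf$, which gives surjectivity.

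There is no genuine obstacle here; the argument is elementary. The only points deserving care are the bookkeeping at the point(s) at infinity --- continuity there, the value of $M_{\phi,a}(\infty)$, and the limiting behavior --- together with the observation that the presence of a root equal to $a$ is precisely what makes $M_{\phi,a}$ strictly, rather than merely weakly, increasing.
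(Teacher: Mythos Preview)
Your proof is correct and follows essentially the same approach as the paper: decompose $M_{\phi,a}$ as a sum of the elementary functions $r\mapsto\min\{r,c_i\}$, observe that continuity, piecewise affinity, and monotonicity are inherited by finite sums, use the fact that $a$ is itself a root (so one summand is the identity) to upgrade to strict monotonicity, and check the limiting behavior for surjectivity. The paper's version is terser---it simply notes that for $r$ sufficiently small $M_{\phi,a}(r)=r\deg(\phi)$ and that $M_{\phi,a}(r)\to\infty$ as $r\to\infty$---but the content is the same.
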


\begin{proof}
  $M_{\phi,a}$ is a sum of functions that are continuous, piecewise affine, and
  non-decreasing, so it inherits all of those properties. Moreover, $a = a_j$
  for some $j$, so the function $\min(r, v(a_j -a)) = r$ is strictly
  increasing. It follows that $M$ is also strictly increasing. For $r$
  sufficiently small, we see $M_{\phi,a} = r \deg(\phi)$. Also, $M_{\phi,a}(r)
  \to \infty$ as $r \to \infty$. Hence,  $M_{\phi,a}$ is onto. 
\end{proof}

If $\phi \in K[z]$ is irreducible, $M_{\phi,a} = M_{\phi,b}$ for any distinct
roots $a,b$ of $\phi$. In that case, we will drop the root from the subscript
and simply write $M_\phi$.

\begin{proposition}
  \label{prop:disk_decomp}
  Let $\phi \in K[z]$ be irreducible and monic of degree $n$. Write $a_1,
  \ldots, a_n$ for the roots of $\phi$, counted with multiplicity. Let $r,s \in
  \Rinf$ be such that $M_{\phi}(r) = s$. Then
  \[
     D(\phi,s) = \bigcup_{i=1}^n D(a_i,r).
  \]
\end{proposition}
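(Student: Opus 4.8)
The plan is to prove the two inclusions $D(\phi,s) \supseteq \bigcup_i D(a_i,r)$ and $D(\phi,s) \subseteq \bigcup_i D(a_i,r)$ separately, working from the factorization $\phi(z) = \prod_{i=1}^n (z - a_i)$ and the defining inequality $D(\phi,s) = \{b \in \CK : V_{b,0}(\phi) \ge s\}$ — i.e., $v(\phi(b)) = \sum_{i=1}^n v(b - a_i) \ge s$. The key computational device is to understand $v(\phi(b)) = \sum_i v(b - a_i)$ for $b$ ranging over a disk $D(a_j, r)$ and to compare it with $M_\phi(r) = \sum_i \min\{r, v(a_i - a_j)\}$.

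First I would handle the easy inclusion. Suppose $b \in D(a_j, r)$ for some $j$, so $v(b - a_j) \ge r$. For each $i$, the ultrametric inequality gives $v(b - a_i) \ge \min\{v(b - a_j), v(a_j - a_i)\} \ge \min\{r, v(a_j - a_i)\}$. Summing over $i$ yields $v(\phi(b)) = \sum_i v(b - a_i) \ge \sum_i \min\{r, v(a_j - a_i)\} = M_\phi(r) = s$, so $b \in D(\phi,s)$. (Here I use that $M_{\phi,a_j} = M_\phi$ is independent of the chosen root, which is noted just before the proposition.) This gives $\bigcup_i D(a_i,r) \subseteq D(\phi,s)$.

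For the reverse inclusion, take $b \in \CK$ with $v(\phi(b)) = \sum_i v(b - a_i) \ge s$; I must produce a root $a_j$ with $v(b - a_j) \ge r$. Let $a_j$ be a root \emph{closest} to $b$, i.e. achieving $\max_i v(b - a_i)$, and set $t = v(b - a_j)$. By the ultrametric inequality, for every $i$ we have $v(b - a_i) = \min\{v(b - a_j), v(a_j - a_i)\}$ whenever $v(b-a_j) \ne v(a_j - a_i)$; more robustly, $v(a_i - a_j) \ge \min\{v(b-a_i), v(b-a_j)\} = v(b-a_i)$ since $a_j$ is closest, and also $v(b-a_i) \ge \min\{v(b-a_j),v(a_i-a_j)\}$, which combined with $v(b-a_i)\le v(b-a_j)=t$ forces $v(b - a_i) = \min\{t, v(a_i - a_j)\}$ for all $i$. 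Therefore $v(\phi(b)) = \sum_i \min\{t, v(a_i - a_j)\} = M_\phi(t)$. Since $M_\phi(t) = v(\phi(b)) \ge s = M_\phi(r)$ and $M_\phi$ is strictly increasing (by the preceding lemma), we conclude $t \ge r$, i.e. $b \in D(a_j, r)$.

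The main obstacle, and the step deserving the most care, is the claim that for the closest root $a_j$ one has the \emph{exact} identity $v(b - a_i) = \min\{t, v(a_i - a_j)\}$ for every $i$, rather than merely an inequality; this is what lets us evaluate $\sum_i v(b-a_i)$ as $M_\phi(t)$ and then invoke strict monotonicity. The subtlety is the usual ultrametric near-equality issue — one direction ($\ge$) is automatic, and for the other direction one has to separate the cases $v(a_i - a_j) < t$ (where equality holds because the two terms differ) and $v(a_i - a_j) \ge t$ (where $v(b - a_i) \ge t$ combines with $v(b-a_i) \le t$ from maximality to force equality). Once that identity is in hand, everything else is immediate from the lemma on $M_\phi$. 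I should also note the degenerate cases where $r$ or $s$ is $\infty$: then $D(\phi,s)$ is the set of roots of $\phi$ and $M_\phi(\infty) = \infty$, so the statement reduces to the tautology that the roots of $\phi$ are exactly $\bigcup_i \{a_i\} = \bigcup_i D(a_i,\infty)$, consistent with the conventions on $\Rinf$.
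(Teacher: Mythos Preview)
Your proof is correct and follows the same overall plan as the paper's: both dispose of the case $r=s=\infty$, prove the easy inclusion $\bigcup_i D(a_i,r)\subseteq D(\phi,s)$ via the ultrametric inequality, and for the reverse inclusion both single out a root closest to $b$. The packaging of the reverse inclusion differs slightly. The paper argues by contrapositive: assuming $b\notin D(a_i,r)$ for every $i$, it re-indexes so that $a_1$ is closest to $b$ and splits into the cases $a_i\in D(a_1,r)$ (where $v(b-a_i)<r$ strictly) and $a_i\notin D(a_1,r)$ (where $v(b-a_i)\le v(a_i-a_1)$), obtaining $v(\phi(b))<s$ directly. You instead prove the exact identity $v(\phi(b))=M_\phi(t)$ for $t=v(b-a_j)$ and then invoke the strict monotonicity of $M_\phi$ from the preceding lemma to conclude $t\ge r$. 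Your route leans on the lemma as a black box and avoids the case split; the paper's version is more self-contained. Both are short and the difference is cosmetic.
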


\begin{proof}
  If $r = s = \infty$, then the result is clear. In the remainder of the proof,
  we assume that $r,s \ne \infty$.

  Write $\phi(z) = \prod (z-a_i)$. Write $B = D(a_1,r)$. If $b \in B$, then we
  have
  \[
    v(\phi(b)) = \sum_{i=1}^n v(b - a_i) 
    \ge \sum_{\substack{1 \le i \le n\\ a_i \in B}} r + \sum_{\substack{1 \le i \le n\\ a_i \not\in B}} v(a_1 - a_i) 
    = \sum_{i=1}^n \min\{ r, v(a_1 - a_i)\} = s.
  \]
  That is, $b \in D(\phi,s)$. Since $D(\phi,s)$ is $G_K$-invariant, we find that $D(\phi,s) \supseteq \bigcup_i D(a_i,r)$.

  Now suppose that $b \not\in D(a_i,r)$ for any $i$. If necessary, re-index the
  roots of $\phi$ so that
  \[
    v(b-a_1) \ge v(b-a_2) \ge \cdots \ge v(b-a_n).
  \]
  As before, set $B = D(a_1,r)$. If $a_i \in B$, then $v(a_i - a_1) \ge r$, and
  we find
  \[
     v(b-a_i) < r = \min\{r, v(a_i-a_1)\}.
  \]
  If $a_i \not\in B$, then  $v(a_i - a_1) < r$, so that
  \[
     v(b-a_i) = \min\{v(b-a_i), v(b-a_1)\} \le v(a_i - a_1) = \min\{r, v(a_i - a_1)\}.
  \]
  Combining these observations yields
  \[
    v(\phi(b)) = \sum_{i=1}^n v(b - a_i) 
    < \sum_{i=1}^n \min\{r, v(a_i - a_1)\} = s.
  \]
  That is, $b \not\in D(\phi,s)$, and hence $D(\phi,s) \subseteq \bigcup_i D(a_i,r)$.
\end{proof}

\begin{proposition}
  \label{prop:preimage}
  Let $\phi \in K[z]$ be irreducible, and let $s \in \Rinf$. The map
  \[
     V_{\phi,s}(f) := \inf_{b \in D(\phi,s)} v(f(b))
     \]
is a $K$-semivaluation on $K[z]$. Write $\zeta_{\phi,s}$ for the corresponding
point of $\Aff{K}$. If
 \[
    D(\phi,s) = \bigcup_{i=1}^n D(a_i,r)
    \]
 as in Proposition~\ref{prop:disk_decomp}, then $\pr^{-1}(\zeta_{\phi,s}) =
 \{\zeta_{a_1,r}, \ldots, \zeta_{a_n,r}\}$.
\end{proposition}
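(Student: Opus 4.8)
The plan is to show that, when evaluated on polynomials with coefficients in $K$, the infimum defining $V_{\phi,s}$ over the whole diskoid $D(\phi,s)$ is already achieved on a single constituent disk $D(a_1,r)$; this identifies $V_{\phi,s}$ with the restriction of the $\CK$-semivaluation $V_{a_1,r}$, and both assertions of the proposition then drop out. The one point that needs care is that an infimum (pointwise minimum) of several semivaluations is not in general a semivaluation; here it is, precisely because $G_K$-invariance forces the constituent disk semivaluations to agree on $K[z]$, so the minimum is vacuous.

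Since $\phi$ is irreducible over $K$, its roots $a_1,\ldots,a_n$ form a single $G_K$-orbit, and because $G_K$ acts on $\CK$ by isometries we have $\sigma\bigl(D(a,r)\bigr) = D(\sigma a,r)$ and hence $\sigma\cdot\zeta_{a,r} = \zeta_{\sigma a,r}$ for all $\sigma\in G_K$; in particular the disks $D(a_1,r),\ldots,D(a_n,r)$ of Proposition~\ref{prop:disk_decomp} form one $G_K$-orbit. Now fix $g\in K[z]$ and, for each $i$, choose $\sigma_i\in G_K$ with $\sigma_i(a_1)=a_i$. Writing a point of $D(a_i,r)$ as $\sigma_i(b')$ with $b'\in D(a_1,r)$ and using that $g$ has coefficients in $K$ (so $g(\sigma_i b')=\sigma_i(g(b'))$) together with the isometry property, we get $v\bigl(g(\sigma_i b')\bigr)=v\bigl(g(b')\bigr)$. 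Therefore, using $D(\phi,s)=\bigcup_i D(a_i,r)$ from Proposition~\ref{prop:disk_decomp},
\[
  V_{\phi,s}(g) = \inf_{b\in D(\phi,s)} v(g(b)) = \min_{1\le i\le n} \inf_{b\in D(a_i,r)} v(g(b)) = \inf_{b'\in D(a_1,r)} v(g(b')) = V_{a_1,r}(g),
\]
the case $s=\infty$ (where each $D(a_i,r)$ is the single point $a_i$ and $V_{a_1,\infty}$ is the type~I valuation $g\mapsto v(g(a_1))$) being identical. Thus $V_{\phi,s}$ is the restriction to $K[z]$ of the $\CK$-semivaluation $V_{a_1,r}$; since such a restriction is always a $K$-semivaluation, $V_{\phi,s}$ is a $K$-semivaluation, and moreover $\pr(\zeta_{a_i,r})=\zeta_{\phi,s}$ for every $i$.

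For the fiber statement I would invoke the identification $\pr\colon\Aff{\CK}/G_K\simarrow\Aff{K}$ recalled in \S\ref{sec:affine_line}, which says that the fiber $\pr^{-1}(\zeta_{\phi,s})$ is a single $G_K$-orbit. By the previous paragraph this fiber contains $\zeta_{a_1,r}$, and its $G_K$-orbit is $\{\zeta_{\sigma a_1,r}:\sigma\in G_K\}=\{\zeta_{a_1,r},\ldots,\zeta_{a_n,r}\}$ by transitivity of $G_K$ on the roots of $\phi$. Hence $\pr^{-1}(\zeta_{\phi,s})=\{\zeta_{a_1,r},\ldots,\zeta_{a_n,r}\}$, as claimed. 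As indicated, the only real obstacle is conceptual --- recognizing that the \emph{a priori} problematic minimum over the constituent disks collapses because of $G_K$-symmetry --- after which the verification is just bookkeeping with isometries.
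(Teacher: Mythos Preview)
Your proof is correct and follows essentially the same route as the paper: both show via the $G_K$-isometry argument that all the $V_{a_i,r}$ agree on $K[z]$, hence $V_{\phi,s}=V_{a_1,r}|_{K[z]}$ is a $K$-semivaluation and each $\zeta_{a_i,r}$ lies in the fiber. For the reverse inclusion, the paper invokes Neukirch's result that $G_K$ acts transitively on the extensions of $V_{\phi,s}$ to $\CK(z)$, whereas you cite the already-recorded identification $\Aff{\CK}/G_K\simarrow\Aff{K}$; these are the same fact packaged differently.
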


\begin{proof}
  For $i=1, \ldots, n$, the $\CK$-semivaluations $\zeta_{a_i,r}$ agree when
  restricted to $K[z]$. Indeed, for $f \in K[z]$ and $\sigma \in G_K$, we have
  \begin{align*}
    V_{a_i,r}(f) &= \inf_{b \in D(a_i,r)} v(f(b)) \\
    &= \inf_{b \in D(a_i,r)} v(\sigma(f(b))) \\
    &= \inf_{b \in D(a_i,r)} v(f(\sigma(b))) \\
    &= \inf_{b \in \sigma(D(a_i,r))} v(f(b))  = V_{a_j,r}(f),
  \end{align*}
  where $\sigma(a_i) = a_j$. It follows that for $f \in K[z]$, we have
  \[
      \inf_{b \in D(\phi,s)} v(f(b)) = \inf_{\substack{1 \le i \le n \\ b \in D(a_i,r)}} v(f(b)) = V_{a_1,r}(f).
  \]
  In particular, $V_{\phi,s}$ is a $K$-semivaluation. We have also shown that
  $\{\zeta_{a_1,r}, \ldots, \zeta_{a_n,r}\} \subset
  \pr^{-1}(\zeta_{\phi,s})$.

  The opposite inclusion follows if we can show that $G_K$ acts transitively on
  the set of extensions of $V_{\phi,s}$ to $\CK[z]$. Apply
  \cite[Prop.~II.9.1]{Neukirch} to the infinite field extension $\CK(z) /
  K(z)$, where $V_{\phi,s}$ is the valuation on $K(z)$.
\end{proof}

Proposition~\ref{prop:preimage} allows us to give the desired classification of
points of $\Aff{K}$:
\begin{itemize}
\item \textbf{Type I}: $\zeta_{\phi,\infty}$ for some irreducible $\phi \in K[z]$.
\item \textbf{Type II}: $\zeta_{\phi,s}$ for some irreducible $\phi \in K[z]$
  and $s \in \QQ$.
\item \textbf{Type III}: $\zeta_{\phi,s}$ for some irreducible $\phi \in K[z]$
  and $s \in \RR \smallsetminus \QQ$.
\item \textbf{Type IV}: A limit of points $(\zeta_{\phi_i,s_i})_{i \ge 0}$,
  where the associated sequence of diskoids $\left(D(\phi_i,s_i)\right)_{i \ge
    0}$ is decreasing and has empty intersection.
\end{itemize}

\begin{remark}
  If $\phi$ is irreducible and $c \in K^\times$, then $\zeta_{c\phi,s} =
  \zeta_{\phi,s-v(c)}$. In particular, nothing is lost in the above
  classification of points by assuming that the polynomial $\phi$ is monic.
\end{remark}



\section{MacLane's method of approximants}
\label{sec:maclane}

Nearly a century ago, MacLane introduced an efficiently computable description
of an arbitrary $K$-semivaluation on the polynomial ring $K[z]$
\cite{MacLane1,MacLane2}. We give a very brief introduction to set
notation. MacLane does not require that $K$ be complete in his work, though our
discussion is greatly simplified by doing so. See \cite[\S2]{MacLane2} for full
definitions and a quick-start guide, or see \cite{MacLane1} for a more thorough
treatment.

The main object of study in MacLane's theory is the inductive
  semivaluation. A {\em first stage inductive semivaluation} $V_1$ arises from the
standard semivaluation on a Tate algebra: there is $a \in K$ and $\mu_1 \in \Rinf$
such that
\[
    V_1 \left(\sum c_m (z-a)^m \right) = \min_ m v(c_m) + m \cdot \mu_1
\]
for each polynomial $\sum c_m (z-a)^m \in K[z]$. We write $V_1 = [v,
  V_1(z-a) = \mu_1]$.

For $i > 1$, an {\em $i$th stage inductive semivaluation} $V_i$ is defined as
an {\em augmentation} of an $(i-1)$th stage valuation $V_{i-1}$: there is a
(monic) {\em key polynomial} $\phi_i$ and a {\em key value} $\mu_i >
V_{i-1}(\phi_i)$ such that
\[
   V_i \left(\sum c_m \phi_i^m \right) = \min_m V_{i-1}(c_m) + m \cdot \mu_i.
\]
Here $c_m \in K[z]$ is a polynomial of degree strictly less than $\phi_i$; any
polynomial $f \in K[z]$ admits a unique expansion of the form $\sum c_m
\phi_i^m$. We write $V_i = [V_{i-1}, V_i(\phi_i) = \mu_i]$.

Given a $K$-semivaluation $W$ on $K[z]$, there exists a sequence of inductive
valuations $V_1 \vallt V_2 \vallt V_3 \vallt \cdots$ such that $W = \lim V_i$
\cite[Thm.~8.1]{MacLane1}. In general, this sequence may be finite or infinite,
and the description is not entirely constructive. However, if $f$ is an
irreducible polynomial and $W = V_{f,\infty}$, then MacLane's {\em method of
  approximants} \cite{MacLane2} is constructive. We briefly recall the main
steps:
\begin{enumerate}
\item Choose the first key polynomial to be $z - a$ for some $a \in K$. Write
  $f = \sum c_j (z-a)^j$ with $c_j \in K$. The lower convex hull of the points
  $(j, v(c_j))$ has a unique slope $m$; let the first key value be $\mu_1 =
  -m$. Set $V_1 = [v, V_1(z-a) = \mu_1]$.
\item Assume we have determined $V_i$, with key polynomial $\phi_i$ and key
  value $\mu_i$. If $\mu_i = \infty$, we are done. Otherwise, the residue field
  $\BLambda{V_i}$ is a rational function field $k_i(y)$, where $k_i$ is an extension
  of the residue field of $K$.
  \begin{itemize}
    \item Find a polynomial $g$ with $\deg(g) < \deg(\phi_i)$ such that
      $V_i(g) = -V_i(f)$. Let $\psi$ be an irreducible polynomial factor of
      the image of $gf$ in $\BLambda{V_i}$. 
    \item Let $\phi_{i+1}$ be a key polynomial for $V_i$ that is also a lift of
      $\psi$ in $K[z]$. (We may choose $\phi_{i+1}$ to be separable if we like
      since it is only well defined up to $V_i$-equivalence.)
    \item Write $f = \sum c_j \phi_{i+1}^j$, where $c_j \in K[z]$ and
      $\deg(c_j) < \deg(\phi_{i+1})$. The lower convex hull of the points $(j,
      V_i(c_j))$ has a unique slope $m$, and we take $\mu_{i+1} = -m$.
      Define $V_{i+1} = [V_i, V_{i+1}(\phi_{i+1}) = \mu_{i+1}]$. 
  \end{itemize}
\end{enumerate}

The method of approximants constructs a particular sequence of points $\zeta_1,
\zeta_2, \zeta_3, \ldots \in \Aff{K}$ that converges to
$\zeta_{f,\infty}$. Since $\Aff{K}$ is uniquely path-connected, each point
$\zeta_i$ can be thought of as a blaze on the trail leading to
$\zeta_{f,\infty}$. At each blaze, a new key polynomial $\phi$ is chosen such
that $\zeta_{\phi,\infty}$ lies in the same direction as
$\zeta_{f,\infty}$. One can show that the new key value $\mu$ is chosen so that
$\zeta_{f,\infty}$ and $\zeta_{\phi,\infty}$ do not lie in the same direction
at $\zeta_{\phi,\mu}$; that is, $\zeta_{\phi,\mu}$ is the maximal point $x \in
\Aff{K}$ such that $x \valle \zeta_{f,\infty}$ and $x \valle
\zeta_{\phi,\infty}$.





\subsection{Diskoids and inductive valuations}

Our primary goal in this section is to exhibit a correspondence between infimum
semivaluations on diskoids and inductive semivaluations.

\begin{proposition}
  \label{prop:general_ruth}
  Let $V$ be an inductive semivaluation on $K[z]$, with key
  polynomial $\phi$ and key value $\mu$. Then for each $f \in K[z]$,
  \[
     V(f) = \inf_{b \in D(\phi,\mu)} v(f(b)).
  \]
  That is, $V$ corresponds to the point $\zeta_{\phi, \mu} \in \Aff{K}$. 
\end{proposition}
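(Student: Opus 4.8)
The plan is to reduce the statement to the description of the Gauss valuation on a genuine disk, pulling back to $\Aff{\CK}$ and transporting the answer through Propositions~\ref{prop:disk_decomp} and~\ref{prop:preimage}. First dispose of the terminal case $\mu = \infty$: then $D(\phi,\infty)$ is the set of roots of $\phi$, so $\zeta_{\phi,\infty}$ is evaluation at a root, i.e.\ the semivaluation $V_{\phi,\infty}$, and one needs only $V = V_{\phi,\infty}$. Since $V(g) = V_{n-1}(g)$ whenever $\deg g < \deg\phi$ (immediate from the augmentation formula), this reduces to the fact that $V_{n-1}(g) = v(g(\alpha))$ for such $g$ and $\alpha$ a root of $\phi$, which is part of the key-polynomial package quoted below. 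So assume $\mu < \infty$; then $V$ is a point of $\Aff{K}$ of type~II or~III, so any $\bar V \in \pr^{-1}(V) \subseteq \Aff{\CK}$ has the form $\zeta_{\alpha_0,r_0}$ for some $\alpha_0 \in \CK$, $r_0 \in \RR$. If I can arrange $\bar V = \zeta_{\alpha,r}$, where $\alpha$ is a root of $\phi$ and $r$ is the unique real with $M_\phi(r) = \mu$, then by Proposition~\ref{prop:preimage} $V = \pr(\bar V) = \zeta_{\phi,\mu}$, and the claimed formula for $V(f)$ is then the definition of $\zeta_{\phi,\mu}$.

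So everything comes down to the claim: \emph{for $\alpha$ a root of $\phi$ and $M_\phi(r) = \mu$, the first-stage $\CK$-valuation $V_{\alpha,r} = [v,\ V_{\alpha,r}(z-\alpha) = r]$ restricts to $V$ on $K[z]$.} I would prove this by induction on the stage $n$ of $V$. The base case $n=1$ is $\phi = z-a$, $a \in K$: here $D(\phi,\mu) = D(a,\mu_1)$ is an honest disk, and the identity $V_1(f) = \min_m \bigl(v(c_m) + m\mu_1\bigr) = \inf_{v(b-a)\ge\mu_1} v(f(b))$ for $f = \sum c_m(z-a)^m$ is the standard formula for the Gauss valuation of a disk: ``$\le$'' is the termwise ultrametric bound, and ``$\ge$'' follows by picking $b$ with $v(b-a)=\mu_1$ whose reduction avoids the finitely many residue classes causing cancellation in $\sum_m c_m(b)(b-a)^m$ (possible as $\CK$ has infinite residue field). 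For $n>1$, write $V = [V_{n-1},\ V(\phi) = \mu]$ with $V_{n-1}$ carrying key polynomial $\phi'$ and key value $\mu'$; by induction $V_{n-1} = \zeta_{\phi',\mu'}$, so (Propositions~\ref{prop:disk_decomp} and~\ref{prop:preimage}) $V_{n-1}(g) = \inf_{b \in D(\phi',\mu')} v(g(b))$ and $D(\phi',\mu')$ is a union of radius-$r'$ disks about the roots $\beta_j$ of $\phi'$, where $M_{\phi'}(r') = \mu'$. Because $\phi$ is a key polynomial for $V_{n-1}$ with $\mu > V_{n-1}(\phi)$, its roots lie in $D(\phi',\mu')$; relabelling, $\alpha \in D(\beta_1,r')$, so $D(\beta_1,r') = D(\alpha,r')$, and since $V_{n-1}(\phi) = \inf_{D(\alpha,r')} v(\phi(\cdot)) = M_\phi(r')$ we get $r = M_\phi^{-1}(\mu) > r'$, whence $D(\alpha,r) \subsetneq D(\alpha,r') = D(\beta_1,r') \subseteq D(\phi',\mu')$. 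Now take the $\phi$-adic expansion $f = \sum_m c_m\phi^m$, $\deg c_m < \deg\phi$, and check three things: (i) $V_{\alpha,r}(\phi) = \sum_i \min\{r, v(\alpha - a_i)\} = M_\phi(r) = \mu = V(\phi)$, where the $a_i$ are the roots of $\phi$; (ii) $V_{\alpha,r}(c_m) = V_{n-1}(c_m) = V(c_m)$, because $\deg c_m < \deg\phi$ forces the $V_{n-1}$-reduction of $c_m$ to be prime to $\bar\phi$ in $\BLambda{V_{n-1}}$, so $v(c_m(\cdot))$ is constant on the subdisk $D(\alpha,r)$ of $D(\beta_1,r')$, equal to its value $V_{n-1}(c_m)$ there; and (iii) choosing $b \in D(\alpha,r)$ with $v(b-\alpha) = r$ generically, the terms of $\sum_m c_m(b)\phi(b)^m$ of minimal valuation $\min_m\{V_{n-1}(c_m)+m\mu\}$ do not cancel. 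Together these give $V_{\alpha,r}(f) = \inf_{b\in D(\alpha,r)} v(f(b)) = \min_m\{V_{n-1}(c_m)+m\mu\} = V(f)$, closing the induction.

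The step I expect to be the main obstacle is (ii): controlling, for a polynomial $g$ of degree below that of the current key polynomial, how $v(g(\cdot))$ varies over the diskoid --- equivalently, that passing to the subdisk cut out by $\bar\phi$ does not change $v(g(\cdot))$. This is exactly where MacLane's structure theory of key polynomials has to be invoked (their $V$-minimality and $V$-irreducibility, and the degree relation between successive key polynomials), and I would either cite \cite{MacLane1,MacLane2} or read it off R\"uth's treatment \cite{Ruth_Thesis_2014}; it is also what makes the $\mu=\infty$ case go. A secondary nuisance is that when $\mu \notin \QQ$ (type~III) there is no $b$ with $v(b-\alpha) = r$ exactly, so the ``generic $b$'' arguments in the base case and in (iii) must be run as limits along $v(b-\alpha)\uparrow r$, using that the relevant valuations stabilize --- routine, but it should be said.
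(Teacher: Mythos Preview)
Your approach is correct and takes a genuinely different route from the paper's. You lift to $\Aff{\CK}$ and prove $V_{\alpha,r}|_{K[z]} = V$ directly, establishing both inequalities term-by-term in the $\phi$-adic expansion, then descend via Proposition~\ref{prop:preimage}. The paper instead proves only the inequality $V \valle V_{\phi,\mu}$ by the same induction (this is the easy half of your argument, using the containment $D(\phi,\mu) \subset D(\phi',\mu')$ of Lemma~\ref{lem:descending_disks}), and then closes by a partial-order trick: $V \valle V_{\phi,\mu} \valle V_{\phi,\infty}$ forces $V = V_{\phi,s}$ for some $s \le \mu$, and the direct calculation $V_{\phi,s}(\phi) = s$ gives $s = \mu$. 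The paper thereby sidesteps your step~(iii) entirely, at the cost of invoking the total order on $\{W : W \valle V_{\phi,\infty}\}$. Your route is more hands-on and yields as a by-product that $v(c_m(\cdot))$ is literally constant on $D(\alpha,r)$; step~(iii) is genuine but not hard --- in $\BLambda{V_{\alpha,r}} \cong k_{\alg}(y)$ the residual images of the terms $c_m\phi^m$ have pairwise distinct $y$-degrees, and for irrational $\mu$ the values $V_{n-1}(c_m) + m\mu$ are already pairwise distinct, so no limit argument is actually needed. Two minor simplifications: the case $\mu = \infty$ follows in one line from Proposition~\ref{prop:maximal} (since $V(\phi) = \infty$ with $\phi$ irreducible); and the cleanest proof of~(ii) is the sandwich $V_{\alpha,r'}(c_m) \le V_{\alpha,r}(c_m) \le v(c_m(\alpha)) = V_{\phi,\infty}(c_m) = V_{n-1}(c_m) = V_{\alpha,r'}(c_m)$, the penultimate equality again by Proposition~\ref{prop:maximal} applied to the augmentation $[V_{n-1}, W(\phi)=\infty]$.
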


R\"uth proved this result in the case where $\mu$ is rational --- i.e., $V$
corresponds to a type~II point of $\Aff{K}$
\cite[\S4.4.2]{Ruth_Thesis_2014}. Using the path connectedness of $\Aff{K}$, we
improve his argument and extend it to the case of irrational and infinite key
values. This will require some preparation.

We begin with an easier result showing that a point of $\Aff{K}$ of type~I, II,
or III can be represented by \textit{some} inductive semivaluation.

\begin{proposition}
  \label{prop:some_indval}
  Let $\zeta_{f,s}$ be a point of $\Aff{K}$ of type~I, II, or~III with $f$
  monic and irreducible. Then the corresponding semivaluation $V_{f,s}$ can be
  represented as an inductive semivaluation. If $s = \infty$, then we may take the
  final key polynomial to be $f$.
\end{proposition}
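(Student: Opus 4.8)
The plan is to run MacLane's method of approximants on $f$ and read off $V_{f,s}$ from the resulting chain, treating the cases $s=\infty$ and $s<\infty$ separately.

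First suppose $s=\infty$, so $\zeta_{f,s}=\zeta_{f,\infty}$. Applying the method of approximants of \S\ref{sec:maclane} to the irreducible polynomial $f$ produces a sequence of inductive valuations $V_1\vallt V_2\vallt\cdots$ converging to $V_{f,\infty}$, whose key polynomials have degrees dividing $\deg f$. The content of MacLane's theorem \cite{MacLane2} is that when $f$ is irreducible and $K$ is complete this sequence terminates: at some finite stage $n$ the key polynomial has degree $\deg f$, and since key polynomials are only determined up to $V_{n-1}$-equivalence we may take $\phi_n=f$. Then $\mu_n=V_n(\phi_n)=V_n(f)$; as the method always produces $V_n\valle V_{f,\infty}$ and terminating means $V_n=V_{f,\infty}$, we get $\mu_n=v(f(\beta))=\infty$, so $V_n=[V_{n-1},V_n(f)=\infty]=V_{f,\infty}$ with final key polynomial $f$. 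This settles the type~I case, including the ``moreover''.

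Now suppose $s<\infty$, so $\zeta_{f,s}$ is of type~II or~III. Again run the method of approximants on $f$ to obtain the finite chain $V_1\vallt\cdots\vallt V_n=V_{f,\infty}$ with key data $(\phi_i,\mu_i)$. Since $D(f,\infty)$ is just the set of roots of $f$ we have $D(f,s)\supseteq D(f,\infty)$, hence $V_{f,s}\valle V_{f,\infty}$. I claim $V_{f,s}$ is one of the partial augmentations $[V_{i-1},V(\phi_i)=t]$ with $t\in\bigl(V_{i-1}(\phi_i),\mu_i\bigr]$ (using the conventions $V_0=v$, $V_0(\phi_1)=-\infty$); each such partial augmentation is by definition an inductive semivaluation, so this suffices. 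For fixed $i$, the family $t\mapsto[V_{i-1},V(\phi_i)=t]$ is a continuous, strictly increasing chain in $\Aff K$ — monotonicity follows by evaluating the defining formula term by term in the $\phi_i$-adic expansion, using that $\phi_i$ is a key polynomial for $V_{i-1}$ — with top element $V_i$ and with $V_{i-1}$ as its limit as $t$ decreases to $V_{i-1}(\phi_i)$. Concatenating these chains over $i=1,\ldots,n$ gives a path in $\Aff K$ from the ``$\mu_1\to-\infty$'' end up to $\zeta_{f,\infty}$; since $\Aff K$ is uniquely path-connected, this path is the unique geodesic ray ending at the type~I point $\zeta_{f,\infty}$, and hence equals $\{x\in\Aff K:x\valle\zeta_{f,\infty}\}$ (a point lying below a type~I point lies on the ray descending from it). As $\zeta_{f,s}\valle\zeta_{f,\infty}$, it lies on this ray, proving the claim.

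The main obstacle is the last step: identifying the geodesic ray below $\zeta_{f,\infty}$ with the concatenation of partial-augmentation chains — equivalently, showing every semivaluation $\valle\zeta_{f,\infty}$ is such a partial augmentation. This rests on the tree structure of $\Aff K$ together with the fact recorded in \S\ref{sec:maclane} that $\zeta_{\phi_i,\mu_i}$ is the maximal $x$ with $x\valle\zeta_{f,\infty}$ and $x\valle\zeta_{\phi_i,\infty}$; alternatively one can lift to $\Aff{\CK}$, where the set of points below the type~I point $\zeta_{\beta,\infty}$ (for $\beta$ a root of $f$) is visibly the totally ordered family $\{\zeta_{\beta,\rho}:\rho\in\Rinf\}$, and then push down along $\pr$. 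For $s=\infty$ the only non-formal input is the termination of MacLane's algorithm for irreducible $f$.
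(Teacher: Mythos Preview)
Your argument is correct and follows essentially the same strategy as the paper: run MacLane's approximants to reach $V_{f,\infty}$, then for finite $s$ use the path structure of $\Aff{K}$ to locate $V_{f,s}$ among the partial augmentations along the chain. One small point of divergence: in the type~I case you assert that the approximant sequence terminates with $\phi_n = f$, while the paper explicitly allows the method to produce an infinite sequence $V_1 \vallt V_2 \vallt \cdots$ converging to $V_{f,\infty}$ (this can occur if one repeatedly chooses key polynomials of degree $\deg f$ that are not $f$ itself), and then argues that for large $n$ the polynomial $f$ is a key polynomial for $V_n$, so one may augment $[V_n, W(f)=\infty]$. Your shortcut ``since key polynomials are determined up to $V_{n-1}$-equivalence we may take $\phi_n = f$'' needs the verification that $f$ is actually minimal in its $V_{n-1}$-equivalence class (i.e., that $V_{n-1}(f) = V_{n-1}(\phi_{n-1}^m)$ with $m = \deg f / \deg \phi_{n-1}$), which is not automatic; the paper's route through $V_n$ avoids this check by appealing to \cite[Prop.~9.4]{MacLane1}.
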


\begin{proof}
  We begin with the case of a type~I point $\zeta_{f,\infty}$. If $f$ is
  linear, say $f = z - a$ for $a \in K$, then $V_1 = [v, V(z-a) = \infty] =
  V_{f,\infty}$. If instead $\deg(f) > 1$, then the method of approximants
  gives a sequence of inductive semivaluations $V_1 \vallt V_2 \vallt V_3
  \vallt \cdots$ such that either $V_{f,\infty} = V_n$ for some $n \ge 2$, or
  else $V_{f,\infty} = \lim V_i$.  We treat these two cases separately.

  Suppose first that $V_{f,\infty} = V_n$ for some $n$.  Then $V_n(\phi_n) =
  \infty$, where $\phi_n$ is the $n$th key polynomial. This implies $f =
  \phi_n$ (Prop.~\ref{prop:maximal}).

  Next suppose that we have an infinite sequence of inductive valuations such
  that $V_{f,\infty} = \lim V_n$. For $n$ sufficiently large, this means $V_n$
  has the same value group as $K[z] / (f)$, and its residue field
  $\BLambda{V_n}$ is purely transcendental over the residue field of
  $K[z]/(f)$. It follows that $\deg(\phi_n) = \deg(f)$. (If $f$ is separable,
  an estimate can be given for the size of $n$ \cite[p.506]{MacLane2}.)  We
  claim that $f$ is a key polynomial for $V_n$. Indeed, $f$ is monic, so $f =
  \phi + c$ for some $c \in K[z]$ with degree strictly smaller than
  $\deg(\phi)$. By construction, $\mu_n = v(c)$, so $V_n(f) = V_n(c)$. And $f$
  is clearly equivalence irreducible since the image of $f/c$ in the residue
  ring is linear. MacLane shows that $f$ is a key polynomial for $V_n$
  \cite[Prop.~9.4]{MacLane1}. We conclude that $W = [V_n, W(f) = \infty]$ is a
  valid augmentation of $V_n$, and it equals $V_{f,\infty}$ by
  Proposition~\ref{prop:maximal}.

  Now assume that $\zeta_{f,s}$ is a type~II or~III point of $\Aff{K}$. Let $t
  \in \RR$ be the largest value such that the disk $D(z,t)$ contains
  $D(f,s)$. In particular, $\zeta_{z,t} \valle \zeta_{f,s} \vallt
  \zeta_{f,\infty}$. Take $V_1 = [v, V_1(z) = t]$, and apply what we have
  already shown about type~I points to represent $V_{f,\infty}$ as an inductive
  semivaluation $V_n$. This gives a sequence of inductive semivaluations $V_1
  \vallt V_2 \vallt \cdots \vallt V_n$ whose corresponding points $x_1 < x_2 <
  \cdots < x_n$ subdivide the segment $[\zeta_{z,t}, \zeta_{f,\infty}]$.

  If $V_j = V_{f,s}$ for some $j$, then we are finished. Suppose instead that
  $V_j \vallt V_{f,s} \vallt V_{j+1}$. Write $\phi = \phi_{j+1}$ for ease of
  notation. Since $\phi$ is a key polynomial for $V_j$, it follows that $W =
  [V_j, W(\phi) = u]$ is a valid augmentation of $V_j$ for all $u >
  V_j(\phi)$. Moreover, $W$ traverses the interval $(V_j, V_{j+1}]$ as $u$
  increases along the real interval $(V_j(\phi), \mu_{j+1}]$. It follows that
  there is $u$ in this interval such that $W = V_{f,s}$, as desired.
\end{proof}

\begin{lemma}
  \label{lem:degree_keyval_comp}
  Let $V_1 \vallt \cdots \vallt V_n$ be a sequence of augmented inductive
  semivaluations on $K[z]$. Write $\phi_i$ and $\mu_i$ for the key polynomial and
  key value of $V_i$, respectively. Then for each $i = 2, \ldots, n$, we have
  \[
      \mu_i \deg(\phi_{i-1}) > \mu_{i-1} \deg(\phi_i).
      \]
\end{lemma}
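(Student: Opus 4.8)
The plan is to induct on the stage, reducing the general inequality to a single augmentation step, and then unwind what it means for a key polynomial to sit strictly below $V_{f,\infty}$ along the path in $\Aff{K}$. The core is the two-stage case: given $V_{i-1} \vallt V_i$ with key data $(\phi_{i-1},\mu_{i-1})$ and $(\phi_i,\mu_i)$, I want $\mu_i \deg(\phi_{i-1}) > \mu_{i-1}\deg(\phi_i)$. First I would record the basic facts from MacLane's theory that are available here: the key polynomial $\phi_i$ of $V_i$ is a lift of an irreducible factor $\psi$ of an image in $\BLambda{V_{i-1}} \cong k_{i-1}(y)$, and crucially $\deg(\phi_i) = \deg_y(\psi)\cdot \deg(\phi_{i-1})$ — so in particular $\deg(\phi_{i-1}) \mid \deg(\phi_i)$, and the ratio $e := \deg(\phi_i)/\deg(\phi_{i-1})$ is a positive integer. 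Rewriting the desired inequality, it becomes simply $\mu_i > e\,\mu_{i-1}$, i.e. $V_i(\phi_i) > e\, V_{i-1}(\phi_{i-1})$.

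**Next I would extract the key inequality** from the augmentation condition $\mu_i > V_{i-1}(\phi_i)$, which is part of the definition of a valid augmentation. So it suffices to show $V_{i-1}(\phi_i) \ge e\,\mu_{i-1} = e\,V_{i-1}(\phi_{i-1})$. Here is where I expand $\phi_i$ in the $\phi_{i-1}$-adic expansion: $\phi_i = \sum_j c_j \phi_{i-1}^j$ with $\deg(c_j) < \deg(\phi_{i-1})$, so that $V_{i-1}(\phi_i) \ge \min_j\big(V_{i-1}(c_j) + j\,\mu_{i-1}\big)$. The point is that the $\phi_{i-1}$-degree of $\phi_i$ is exactly $e$ (since $\phi_i$ is monic of degree $e\deg(\phi_{i-1})$), so the top term $j = e$ contributes $V_{i-1}(c_e) + e\,\mu_{i-1} = e\,\mu_{i-1}$ because $c_e = 1$. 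The content of MacLane's construction — that $\phi_i$ is a key polynomial for $V_{i-1}$, hence $V_{i-1}$-equivalence-irreducible and $V_{i-1}$-minimal — forces the image of $\phi_i$ in the graded/residue ring to be homogeneous of $\phi_{i-1}$-degree $e$ with no lower-order terms surviving; concretely, $V_{i-1}(c_j) + j\mu_{i-1} \ge e\,\mu_{i-1}$ for every $j$ appearing, which is exactly $V_{i-1}(\phi_i) = e\,\mu_{i-1}$ (equality, in fact), and this is $\ge e\,V_{i-1}(\phi_{i-1})$ as needed. Combining with the strict augmentation inequality gives $\mu_i > e\,\mu_{i-1}$.

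**The main obstacle** I anticipate is pinning down precisely why the lower-order terms $c_j$ ($j < e$) cannot drag $V_{i-1}(\phi_i)$ below $e\,\mu_{i-1}$ — i.e. why $V_{i-1}(c_j) + j\,\mu_{i-1} \ge e\,\mu_{i-1}$. This is really the statement that $\phi_i$, being a key polynomial for $V_{i-1}$ (a lift of an irreducible $\psi \in \BLambda{V_{i-1}}[y]$ of $y$-degree $e$), has $V_{i-1}$-residue equal to (a unit multiple of) $\psi(y)$ up to the appropriate twist, and an irreducible polynomial in $k_{i-1}(y)$... wait, in $\BLambda{V_{i-1}}$ which is the residue *field* — the subtlety is that $\psi$ is irreducible as a polynomial in $y$ over $k_{i-1}$, monic of degree $e$, so when we lift, the $\phi_{i-1}^0$-coefficient and all intermediate ones are constrained but need not vanish in $K[z]$; what must hold is the valuation bound. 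I would handle this by citing MacLane's characterization of key polynomials directly: \cite[\S4]{MacLane1} or \cite[\S2]{MacLane2} gives that for a key polynomial $\phi$ of $V_{i-1}$ one has $V_{i-1}(\phi) = \sum$ over the Newton-polygon-defining term, and the ``$V_{i-1}$-minimality'' says no polynomial of smaller degree is $V_{i-1}$-equivalent to a multiple of $\phi$; equivalently the image of $\phi$ in the graded ring $\mathrm{gr}_{V_{i-1}} K[z]$ is irreducible, which pins the expansion. Once that citation is in place the inequality is immediate.

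\begin{proof}[Proof sketch]
  It suffices to prove the case $n = 2$; the general case follows by applying
  this to each consecutive pair $V_{i-1} \vallt V_i$. So suppose $V =
  [V', V(\phi) = \mu]$ is an augmentation of $V'$, where $V'$ has key polynomial
  $\phi'$ and key value $\mu'$. By MacLane's construction of key polynomials,
  $\phi$ is a lift of an irreducible polynomial $\psi \in \BLambda{V'}[y]$, and
  if $\psi$ has degree $e$ in $y$, then $\deg(\phi) = e \deg(\phi')$; in
  particular, $e = \deg(\phi)/\deg(\phi')$ is a positive integer. The claimed
  inequality $\mu \deg(\phi') > \mu' \deg(\phi)$ is therefore equivalent to
  $\mu > e \mu'$.

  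Write $\phi = \sum_{j=0}^{e} c_j (\phi')^j$ with $c_j \in K[z]$ of degree
  strictly less than $\deg(\phi')$; since $\phi$ is monic of degree $e
  \deg(\phi')$, we have $c_e = 1$. Because $\phi$ is $V'$-equivalent to a lift
  of the irreducible $\psi$ of $y$-degree $e$, the image of $\phi$ in the graded
  ring $\mathrm{gr}_{V'} K[z]$ is homogeneous of degree $e \mu'$ in the relevant
  grading; concretely, $V'(c_j) + j \mu' \ge e \mu'$ for every $j$, with
  equality for $j = e$ since $c_e = 1$. Hence $V'(\phi) = e \mu' = e V'(\phi')$.
  (See \cite[\S4]{MacLane1} for the relevant properties of key polynomials.)

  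Finally, the augmentation $V = [V', V(\phi) = \mu]$ is valid only when $\mu >
  V'(\phi)$. Combining, $\mu > V'(\phi) = e \mu'$, which is the desired
  inequality.
\end{proof}
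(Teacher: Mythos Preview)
Your proof is correct and follows essentially the same route as the paper: expand $\phi_i$ in its $\phi_{i-1}$-adic expansion with leading coefficient $1$, use the key-polynomial property (the paper cites \cite[Prop.~9.4]{MacLane1} directly, you cite \S4 with a bit more explanation) to get $V_{i-1}(\phi_i) = m\,\mu_{i-1}$ where $m = \deg(\phi_i)/\deg(\phi_{i-1})$, and then invoke the augmentation inequality $\mu_i > V_{i-1}(\phi_i)$. The only difference is cosmetic --- the paper is terser and pins the exact MacLane reference, while you unpack the graded-ring reasoning behind it.
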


\begin{proof}
  Write
  \[
  \phi_i = c_m \phi_{i-1}^m + \cdots + c_1 \phi_{i-1} + c_0,
  \]
  where $\deg(c_j) < \deg(\phi_{i-1})$ for each $j$. Since $\phi_i$ is a key
  polynomial for $V_{i-1}$, we have $c_m = 1$ and $V_{i-1}(\phi_i) =
  V_{i-1}(\phi_{i-1}^m)$ \cite[Prop.~9.4]{MacLane1}. It follows that $m
  \deg(\phi_{i-1}) = \deg(\phi_i)$, and 
  \[
     \mu_i = V_i(\phi_i) > V_{i-1}(\phi_i) = m \mu_{i-1}. \qedhere
  \]
\end{proof}

The next result is due to R\"uth \cite[Lem.~4.53]{Ruth_Thesis_2014}, though we fill a small gap in the proof with Lemma~\ref{lem:degree_keyval_comp}.

\begin{lemma}
  \label{lem:descending_disks}
  Let $V_1, V_2, \ldots, V_n$ be a sequence of augmented inductive semivaluations
  on $K[z]$. Write $\phi_i$ and $\mu_i$ for the key polynomial and key value of
  $V_i$, respectively. Then
  \[
     D(\phi_n, \mu_n) \subsetneq \cdots \subsetneq D(\phi_2, \mu_2) \subsetneq
     D(\phi_1, \mu_1).
  \]
\end{lemma}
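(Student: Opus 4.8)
The plan is to reduce to consecutive pairs and argue in the Berkovich line using the diskoid decomposition of \S\ref{sec:affine_line}. It suffices to prove $D(\phi_i,\mu_i) \subsetneq D(\phi_{i-1},\mu_{i-1})$ for each $i \in \{2,\ldots,n\}$. I would first record that any two diskoids $D(\phi,s)$, $D(\psi,t)$ (with $\phi,\psi$ irreducible) are either nested or disjoint: by Proposition~\ref{prop:disk_decomp} each is a $G_K$-orbit of disks, and since two disks in $\CK$ are always nested or disjoint, a short argument using the transitivity of $G_K$ on the roots of an irreducible polynomial propagates this to diskoids. So it remains to show (i) $D(\phi_i,\mu_i) \cap D(\phi_{i-1},\mu_{i-1}) \ne \varnothing$, and (ii) $D(\phi_{i-1},\mu_{i-1}) \not\subseteq D(\phi_i,\mu_i)$; together with nesting these give the claim.

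For (i) I would take $\alpha$ to be a root of $\phi_i$. Then $\alpha \in D(\phi_i,\mu_i)$ trivially, and I claim $v(\phi_{i-1}(\alpha)) \ge \mu_{i-1}$. Expand $\phi_i = \sum_{j=0}^m c_j \phi_{i-1}^j$ in $\phi_{i-1}$-adic form; MacLane's Proposition~9.4 (cited in the proof of Lemma~\ref{lem:degree_keyval_comp}) gives $c_m = 1$ and $V_{i-1}(\phi_i) = m\mu_{i-1}$, hence $V_{i-1}(c_j) \ge (m-j)\mu_{i-1}$ for all $j$. Standard properties of inductive valuations ($V_{i-1} \valle V_i \valle V_{\phi_i,\infty}$, the last identity being $V_{\phi_i,\infty} = [V_{i-1}, V(\phi_i)=\infty]$ via Proposition~\ref{prop:maximal}) give $v(c_j(\alpha)) \ge V_{i-1}(c_j)$. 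Plugging these into $\phi_{i-1}(\alpha)^m = -\sum_{j<m} c_j(\alpha)\phi_{i-1}(\alpha)^j$ and invoking the ultrametric inequality forces $v(\phi_{i-1}(\alpha)) \ge \mu_{i-1}$, so $\alpha$ lies in both diskoids and they are nested.

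For (ii) I would attach to each diskoid $D = D(\phi,\mu)$ the quantity $I(D) := \mu/\deg\phi$ and show it is intrinsic to $D$ and strictly monotone decreasing under proper inclusion. Writing the decomposition of $D$ as a disjoint union of $N$ distinct disks of common radius $r$, the group $G_K$ acts transitively on these $N$ disks, so each contains the same number $d_\phi$ of roots of $\phi$; thus $\deg\phi = Nd_\phi$ and, computing the sum defining $M_\phi(r)$ against a root $\gamma_1$ in the first disk, $\mu = M_\phi(r) = d_\phi\big(r + \sum_{k\ge 2} v(\gamma_k-\gamma_1)\big)$ where $\gamma_k$ is a point of the $k$-th constituent disk. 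Hence $I(D) = \tfrac1N\big(r + \sum_{k\ge 2} v(\gamma_k-\gamma_1)\big)$, which depends only on $D$; the same computation, carried out for a pair $D \subseteq D'$ by grouping the constituent disks of $D$ according to which disk of $D'$ contains them, yields $I(D) \ge I(D')$ with equality only if $D = D'$. Now if $D(\phi_{i-1},\mu_{i-1}) \subseteq D(\phi_i,\mu_i)$ we would get $\mu_{i-1}/\deg\phi_{i-1} \ge \mu_i/\deg\phi_i$, contradicting Lemma~\ref{lem:degree_keyval_comp} ($\mu_i\deg\phi_{i-1} > \mu_{i-1}\deg\phi_i$); and equality of the two diskoids would likewise force $\mu_i\deg\phi_{i-1} = \mu_{i-1}\deg\phi_i$, again impossible. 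Therefore $D(\phi_i,\mu_i) \subsetneq D(\phi_{i-1},\mu_{i-1})$.

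I expect (ii) to be the main obstacle: establishing that $\mu/\deg\phi$ is well defined on diskoids and monotone under inclusion requires careful bookkeeping of how the roots of an irreducible polynomial distribute among the constituent disks, and this is precisely where R\"uth's argument needed the key-value/degree comparison of Lemma~\ref{lem:degree_keyval_comp}. One should also treat the degenerate case $\mu_n = \infty$ separately: then $D(\phi_n,\infty)$ is the finite set of roots of $\phi_n$ rather than a union of disks, but the argument in (i) and the nesting step go through verbatim, and properness of the final inclusion is clear since $D(\phi_{n-1},\mu_{n-1})$ is infinite.
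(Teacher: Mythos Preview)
Your proof is correct and follows essentially the same route as the paper: both arguments show that the roots of $\phi_{i+1}$ lie in $D(\phi_i,\mu_i)$, compute the quantity $\mu/\deg\phi$ from the disk decomposition of a diskoid, and invoke Lemma~\ref{lem:degree_keyval_comp} to obtain strictness. The paper is slightly more direct in that, once the roots of $\phi_{i+1}$ are known to lie in $D_i$, it writes $D_i = D(\phi_{i+1},s)$ and computes $s = \mu_i\,\deg(\phi_{i+1})/\deg(\phi_i) < \mu_{i+1}$ outright, bypassing your separate nesting lemma and general monotonicity of $I(D)$.
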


\begin{proof}
  Write $D_i = D(\phi_i, \mu_i)$ for each $i$.  Fix an index $i \ge 1$, and let
  us show that $D_{i+1} \subsetneq D_i$. For ease of notation, set $\phi =
  \phi_{i+1}$. We claim that the roots of $\phi$ lie inside $D_i$. Define $W =
      [V_i, W(\phi) = \infty]$. Then if $\alpha$ is a root of $\phi$, we have
  \[
   \mu_i = V_i(\phi_i) = W(\phi_i) = v(\phi_i(\alpha)),
  \]
  so that $\alpha \in D_i$. Since $D_i$ is $G_K$-invariant, we conclude that
  all roots of $\phi$ lie in $D_i$.

  Since $D(\phi, \infty) \subset D(\phi_i, \mu_i)$, we can write $D(\phi, s) =
  D(\phi_i, \mu_i)$ for some $s \in \RR$. Write
  \[
     D(\phi,s) = B_1 \cup \cdots \cup B_d,
  \]
   a union of $G_K$-translates of a disk of  radius $r =
  M_{\phi}^{-1}(s)$ (Prop.~\ref{prop:disk_decomp}). Set $r_1 = r$ and $r_j =
  v(B_1 - B_j)$ for $j = 2, \ldots, d$. (As $B_1$ and $B_j$ are disjoint, the
  difference between any pair of elements from $B_1$ and $B_j$ has the same
  valuation.) Each disk must contain $\deg(\phi)/d$ roots of $\phi$,
  counted with multiplicity. If we write $a_i$ for a root of $\phi$, then we
  have
  \begin{align*}
  s &= \sum_{i=1}^{\deg(\phi)} \min \{r, v(a_1 - a_i)\} \\ 
  &= \sum_{j=1}^d \ \sum_{\substack{1 \le i \le \deg(\phi) \\ a_i \in B_j}} \min \{r, v(a_1 - a_i)\} \\
  &= \frac{\deg(\phi)}{d}(r + r_2 + \cdots + r_d).
  \end{align*}
  Applying this same argument to the decomposition $D(\phi_i,\mu_i) = B_1 \cup
  \cdots \cup B_d$ shows that
  \[
    \mu_i = \frac{\deg(\phi_i)}{d} (r + r_2 + \cdots + r_d).
  \]
  Combining the last two displayed equations, we find that
  \[
    s = \mu_i \frac{\deg(\phi)}{\deg(\phi_i)}.
    \]
  Then $s < \mu_{i+1}$ (Lem.~\ref{lem:degree_keyval_comp}), which means
  $D(\phi_i, \mu_i) = D(\phi,s) \supsetneq D(\phi,\mu_{i+1})$.
\end{proof}

\begin{proof}[Proof of Proposition~\ref{prop:general_ruth}]
  Let $V_1 \vallt \cdots \vallt V_n$ be a sequence of augmented inductive
  semivaluations on $K[z]$, where $V = V_n$. Write $\phi, \mu$ for the key
  polynomial and key value for $V_n$, respectively. We want to show that $V_n =
  V_{\phi,\mu}$.

  To begin, we claim that $V_{\phi,\mu} \valge V_n$. That is, if $b \in
  D(\phi,\mu)$, then $v(f(b)) \ge V_n(f)$ for all $f \in K[z]$.  Suppose first
  that $n = 1$. Write $V_1 = [v, V_1(z-a) \ge \mu]$ for some $a \in K$ and $\mu
  \in \Rinf$. For $f \in K[z]$, we write $f = \sum c_i (z-a)^i$ with $c_i \in
  K$. For $b \in D(z-a, \mu)$, we have
  \[
     v(f(b)) \ge \min_i \{ v(c_i) + i \cdot v(b-a)\} \ge \min_i \{v(c_i) + i \mu\} = V_1(f).
     \]
     
  Now suppose the claim holds for all indices $i < n$. Write $V_n = [V_{n-1},
    V_n(\phi) = \mu]$. For $f \in K[z]$, we write $f = \sum c_i \phi^i$, where
  $c_i \in K[z]$ satisfies $\deg(c_i) < \deg(\phi)$. For any $b \in
  D(\phi,\mu)$, we find that
  \[
     v(c_i(b)) \ge V_{\phi_{n-1},\mu_{n-1}}(c_i) \ge V_{n-1}(c_i),
     \]
  where the first inequality is a consequence of $D(\phi_{n-1},\mu_{n-1})
  \supsetneq D(\phi,\mu)$ (Lem.~\ref{lem:descending_disks}), and the second is
  our inductive hypothesis. The ultrametric inequality gives
  \[
  v(f(b)) \ge \min_i \{ v(c_i(b)) + i \cdot v(\phi(b))\}
    \ge \min_i \{V_{n-1}(c_i) + i \mu \} = V_n(f).
  \]
  Since $b \in D(\phi,\mu)$ is arbitrary, we have $V_{\phi,\mu} \valge V_n$

  Since $V_n \valle V_{\phi,\mu}$, there is some $s \le \mu$ with $V_n =
  V_{\phi,s}$. To complete the proof, it suffices to show that
  $V_{\phi,s}(\phi) = s$ for all $s \in \RR$.

  Evidently, the definition of the infimum semivaluation yields
  $V_{\phi,s}(\phi) \ge s$. Define $r = M_\phi^{-1}(s)$, and write $\phi(z) =
  \prod (z-a_i)$. Proposition~\ref{prop:disk_decomp} gives $D(\phi,s) =
  \bigcup_i D(a_i, r)$.  If $r \in \QQ = v(K_\alg^\times)$, then there exists
  $b \in D(a_1,r)$ such that $v(b - a_j) = r$ for each $a_j \in D(a_1,r)$. This
  gives
  \[
     v(\phi(b)) = \sum_{i=1}^{\deg(\phi)} v(b - a_i) = \sum_{i=1}^{\deg(\phi)} \min \{r, v(a_1-a_i)\} = s.
  \]
  That is, $V_{\phi,s}(\phi) = s$. If instead $r \not\in \QQ$, then we may
  approximate $r$ by rational numbers and apply the preceding argument to
  produce $b \in D(a_1,r)$ such that $v(\phi(b)) \le s + \varepsilon$ for any
  $\varepsilon > 0$. So $V_{\phi,s}(\phi) = s$ in that case as well. The proof
  is complete.
\end{proof}


\subsection{Local geometry at a type~II point}

\noindent \textbf{Convention.} In this section, we write $k$ for the residue
field of $K$ (instead of $\tilde K$). Then the residue field of $\CK$ is
$k_\alg$.

To motivate our need for local geometric considerations, we sketch the idea of
the proof of Ramified Approximation. Let $f \in K[z]$ be an irreducible
polynomial. Apply MacLane's method of approximants to obtain a sequence of
inductive semivaluations $V_1 \vallt V_2 \vallt \cdots \vallt V_n =
V_{f,\infty}$. This corresponds to a sequence of points $\zeta_1 \vallt \zeta_2
\vallt \cdots \vallt \zeta_n = \zeta_{f,\infty}$ of $\Aff{K}$, which in turn
corresponds to a descending sequence of diskoids $D_1 \supsetneq D_2 \supsetneq
\cdots \supsetneq D_n$, each of which contains the roots of $f$. The minimal
disk $D$ containing the roots of $f$ sits somewhere in this chain, say $D_j
\supseteq D \supsetneq D_{j+1}$. In particular, for $1 \le i \le j$, the
diskoid $D_i$ is actually a \textit{disk}, while for $i \ge j+1$, it is a
nontrivial $G_K$-orbit of disks. This bifurcation can be detected in the way
that ``tangent vectors'' collapse under the map $\pr \colon \Aff{\CK} \to
\Aff{K}$.

The notion of tangent space at a point of $\Aff{\CK}$ has been explored
thoroughly; for a quick summary, see \cite[\S2.2.5]{Faber_Berk_RamI_2013}. We
now extend these ideas to $\Aff{K}$. 

\begin{proposition}
  \label{prop:directions}
  Let $\zeta$ be a type~II point of $\Aff{K}$, represented by an inductive
  valuation $V$. Write ${\BLambda{V} \cong \ell(x)}$ for the residue field of
  $V$, where $\ell$ is a finite extension of $k$, the residue field of $K$.
  \begin{itemize}
    \item The places of $\ell(x)$ parameterize the connected components of
      $\Aff{K} \smallsetminus \{\zeta\}$.
    \item The finite places of $\ell(x)$ parameterize equivalence classes of
      key polynomials for $V$.
  \end{itemize}
\end{proposition}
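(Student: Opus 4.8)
The plan is to combine MacLane's classification of key polynomials over a fixed inductive valuation with the tree structure of $\Aff{K}$. By Proposition~\ref{prop:general_ruth} I identify $\zeta$ with $\zeta_{\phi,\mu}$, where $\phi$ and $\mu$ are the last key polynomial and key value of $V$. The second bullet is then a theorem of MacLane \cite{MacLane1} --- already implicit in the recipe of \S\ref{sec:maclane} --- namely that a key polynomial for $V$ is exactly a monic lift to $K[z]$ of a monic irreducible polynomial in the ring $\ell[x]$, where $\BLambda{V} \cong \ell(x)$, and that two key polynomials for $V$ are $V$-equivalent precisely when they lift the same irreducible polynomial. Reducing a key polynomial therefore gives a bijection from the set of $V$-equivalence classes of key polynomials of $V$ onto the monic irreducibles in $\ell[x]$, i.e., onto the finite places of $\ell(x)$. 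To prove the first bullet it then suffices to biject the connected components of $\Aff{K} \smallsetminus \{\zeta\}$ with the finite places of $\ell(x)$ together with the place at infinity, matching finite places to components through these key-polynomial classes.

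For that bijection, recall that $\Aff{K}$ is uniquely path-connected (being the $G_K$-quotient of the $\RR$-tree $\Aff{\CK}$), so each $W \ne \zeta$ has a well-defined initial direction along the segment $[\zeta,W]$, and two points of $\Aff{K}\smallsetminus\{\zeta\}$ lie in the same component exactly when these directions agree. The set $\{W : W \vallt \zeta\}$ is totally ordered --- $\zeta$ has a unique \emph{descending} direction in the tree, realized by the ray $\{\zeta_{\phi,s} : s < \mu\}$ --- so it, and every $W$ whose initial direction points downward, lies in a single component $C_\downarrow$. For a key polynomial $\phi'$ of $V$ and $u > V(\phi')$, Proposition~\ref{prop:general_ruth} identifies $\zeta_{\phi',u}$ with the augmentation $[V, W(\phi') = u]$; since $\phi'$ is $V$-minimal the $\phi'$-expansion computes $V$, so $[V, W(\phi')=u] \to \zeta$ as $u \to V(\phi')^+$, and hence the ascending ray $\{\zeta_{\phi',u} : V(\phi') < u \le \infty\}$ emanates from $\zeta$ into a single, non-descending component $C_{[\phi']}$. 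MacLane's analysis of augmentations shows that $C_{[\phi']}$ depends only on the class $[\phi']$ and that inequivalent key polynomials yield distinct components, their ascending rays sharing only the point $\zeta$. Conversely, given a component $C \ne C_\downarrow$, pick $W \in C$; sliding $W$ along $[\zeta,W]$ toward $\zeta$ we may assume $W$ is of type~II or~III, so that $[\zeta,W]$ begins by ascending and thus passes through inductive valuations $W'$ with $\zeta \vallt W'$ arbitrarily near $\zeta$. MacLane's structure theorem \cite[Thm.~8.1]{MacLane1} writes such a $W'$ as $[V,W'(\phi')=u]$ for a key polynomial $\phi'$ of $V$, whence $C = C_{[\phi']}$. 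Thus the components of $\Aff{K}\smallsetminus\{\zeta\}$ are exactly $C_\downarrow$ together with the $C_{[\phi']}$; matching $C_{[\phi']}$ with the finite place attached to $[\phi']$ and $C_\downarrow$ with the place at infinity proves the first bullet.

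The step I expect to be the main obstacle is the passage through MacLane's fine theory of augmentations: that inequivalent key polynomials of $V$ span genuinely different directions at $\zeta$ while $V$-equivalent ones span the same direction, and that every valuation lying immediately above $\zeta$ is an augmentation of $V$ by one of its key polynomials. These facts are in \cite{MacLane1,MacLane2} (and, in geometric language, in R\"uth's thesis \cite{Ruth_Thesis_2014}), but assembling them cleanly requires careful bookkeeping of the residue-field extensions introduced at successive augmentations --- which may well be inseparable, and which is exactly why the proposition is stated in terms of irreducible polynomials over $\ell$ and places of $\ell(x)$ rather than points of $\PP^1_\ell$. The remaining points --- that $C_\downarrow$ is a single component disjoint from each $C_{[\phi']}$, and that one may slide $W$ to a point of type~II or~III to avoid type~IV issues --- are routine given the tree structure of $\Aff{K}$ and its order $\valle$ inherited from $\Aff{\CK}$.
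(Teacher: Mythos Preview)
Your proposal is correct and follows essentially the same route as the paper: both reduce the second bullet to MacLane's classification of key polynomials (the paper cites \cite[Thm.~13.1]{MacLane1}), and both obtain the first bullet by matching the infinite place to the single ``downward'' component $\{W : W \not\valge V\}$ and each finite place to the component swept out by augmentations $[V,\,W(\phi')=u]$ --- what the paper packages as a \emph{germ} at $V$ and you unpack explicitly via the tree structure. One small citation point: the fact that every $W'$ immediately above $\zeta$ is an augmentation of $V$ by a key polynomial is not quite \cite[Thm.~8.1]{MacLane1} (which only gives \emph{some} inductive sequence); you want MacLane's finer results on augmentation, as you yourself flag in the final paragraph.
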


\begin{proof}
 The infinite place of $\ell(x)$ corresponds to all valuations $W$ with $W \not
 \valge V$. Each other place of $\ell(x)$ is described by the order of
 vanishing at an irreducible monic polynomial $\psi \in \ell[x]$. Then $\psi$
 corresponds to a particular equivalence class of key polynomials $\phi$ for
 $V$ \cite[Thm.~13.1]{MacLane1}\footnote{The equivalence class corresponding to
 $\psi = x$ was excluded from \cite[Thm.~13.1]{MacLane1} in order to use
 \cite[Thm.~9.4]{MacLane1}, but \textit{loc.cit.} contains an error. It does
 not allow for $\phi_k$ to be a key polynomial for $V_k$. See
 \cite[Lem.~4.3]{MacLane2}.}, and equivalent key polynomials give the same germ
 of an augmentation. (Two augmentations $W,W'$ yield the same \textbf{germ} if
 there is an augmentation $W''$ of $V$ such that $W'' \valle W$ and $W'' \valle
 W'$.)  Finally, a germ at $V$ describes an equivalence class of augmentations
 $W$ with $V \vallt W$, and hence a connected component of $\Aff{K} \setminus
 \{\zeta\}$.
\end{proof}

Let $\zeta \in \Aff{K}$ be a point. Write $T_\zeta$ for the set of connected
components of $\Aff{K} \smallsetminus \{\zeta\}$; an element $\vec{w} \in T_\zeta$
will be called a \textbf{tangent vector} at $\zeta$. If we wish to emphasize that
it is a subset of $\Aff{K}$, we will write $U_{\vec{w}}$. 

Take $\alpha \in \Aff{\CK}$ to be a point, and let $\zeta = \pr(\alpha) \in
\Aff{K}$ be its image. A tangent vector $\vec{v} \in T_\alpha$ corresponds to a
connected component $U_{\vec{v}}$, and one sees that $\pr(U_{\vec{v}})$ is a
connected component of $\Aff{K} \smallsetminus \{\zeta\}$. It follows that
$\pr$ induces a well defined map $\pr_* \colon T_\alpha \to T_\zeta$. Given
$\vec{w} \in T_\zeta$, we define the \textbf{branch multiplicity} $m(\zeta,
\vec{w})$ to be the number of distinct vectors $\vec{v} \in T_\alpha$ such that
$\pr_*(\vec{v}) = \vec{w}$. As the notation suggests, this number is
independent of the choice of $\alpha \in \pr^{-1}(\zeta)$ because $G_K$ acts
transitively on this set and carries tangent vectors to tangent vectors. See
Figure~\ref{fig:branch_mult}.

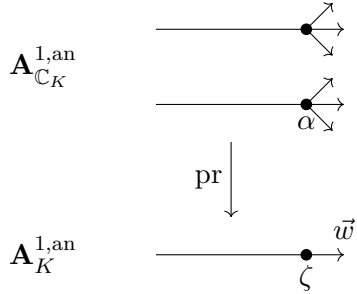
\begin{figure}[!htb]
  \begin{center}
\begin{tikzpicture}

  \node at (-1.5,0) {$\Aff{K}$};  
  \draw (0,0) -- (2,0);
  \filldraw (2,0) circle (0.07cm);
  \draw[->] (2,0) -- (2.5,0);
  \node at (2,-0.3) {$\zeta$};
  \node at (2.5, 0.35) {$\vec{w}$};

  \node at (-1.5,2.5) {$\Aff{\CK}$};    
  \draw (0,2) -- (2,2);
  \filldraw (2,2) circle (0.07cm);
  \node at (2,1.75) {$\alpha$};
  \draw[->] (2,2) -- (2.35,2.35);
  \draw[->] (2,2) -- (2.5,2);
  \draw[->] (2,2) -- (2.35,1.65);    
  
  \draw (0,3) -- (2,3);
  \filldraw (2,3) circle (0.07cm);
  \draw[->] (2,3) -- (2.35,3.35);
  \draw[->] (2,3) -- (2.5,3);
  \draw[->] (2,3) -- (2.35,2.65);

  \draw[->] (1,1.5) -- (1,0.5);
  \node at (.7,1) {$\pr$};
  
\end{tikzpicture}
  \end{center}
  \caption{An example showing the projection map locally collapsing
    $m(\zeta,\vec{w}) = 3$ branches at $\alpha \in \Aff{\CK}$ to a single
    branch at $\zeta \in \Aff{K}$. Here $\pr^{-1}(\zeta)$ consists of two points.}
  \label{fig:branch_mult}
\end{figure}

Keep the setup of the last paragraph. The map $\pr \colon \Aff{\CK} \to
\Aff{K}$ is a morphism of locally ringed spaces, so it induces a $k$-algebra
homomorphism of residue fields $\BLambda{\zeta} \to \BLambda{\alpha}$. If
$\alpha$ and $\zeta$ are type~II points, then in suitable coordinates this is a
homomorphism $\ell(x) \to k_{\alg}(y)$. To describe it precisely, we need some
additional terminology.

Write $W$ for the valuation corresponding to $\zeta$; we may assume that $W$ is
an inductive valuation (Prop.~\ref{prop:some_indval}). If $W$ is a first stage
inductive valuation, write $\Gamma = \ZZ$. Otherwise, $W$ is an augmentation of
another inductive valuation $V$, and we write $\Gamma$ for the value group of
$V$. If $f \in K[z]$ is a polynomial such that $W(f) \in \Gamma$, then there is
an equivalence unit $g \in K[z]$ such that $W(fg) = 0$ and $g g' \equiv 1 \pmod
V$ for some $g' \in K[z]$ \cite[Lem.~9.1/3]{MacLane1}. (Intuitively,
equivalence units are the polynomials that have no root in the direction at $V$
determined by $W$.) A \textbf{residual polynomial} for $f$ is the image of $fg$
in $\BLambda{W}$. It is well defined up to multiplication by an element of the
constant field of $\BLambda{W}$.

Let $\phi$ and $s$ be the key polynomial and key value for $W$,
respectively. Write $\tau \ge 1$ for the relative ramification index of $W/V$;
i.e., $\tau$ is the minimal positive integer such that $\tau s \in \Gamma$.
Then $W(\phi^\tau) \in \Gamma$, and we take an equivalence unit $g$ such that
$W(g \phi^\tau) = 0$. Define $x$ to be the image of $g \phi^\tau$ in
$\BLambda{W}$; it is our distinguished residual polynomial for
$\phi^\tau$. Then $\BLambda{W} \cong \ell(x)$, where $\ell = k_\alg \cap
\BLambda{W}$ \cite[Cor.~12.2]{MacLane1}.

Let $D(a,r)$ be a disk in the decomposition of $D(\phi,s)$
(Prop.~\ref{prop:disk_decomp}), so that $\pr(V_{a,r}) = V_{\phi,s} = W$. If $c
\in K_\alg$ is such that $v(c) = -r$, then $V_{a,r}(c(z-a)) = 0$, and we define
$y$ to be the image of $c(z-a)$ in $\BLambda{V_{a,r}}$. If $D(a,r)$ contains
exactly $d$ zeros of $\phi$, when counted with multiplicity, then the image of
$g \phi^\tau$ in $\BLambda{V_{a,r}}$ is $\eta^\tau$ for some polynomial $\eta$
of degree~$d$. We have thus proved that there is a choice of coordinates such
that the composition
  \[
     \ell(x) \cong \BLambda{V_{\phi,s}} \longrightarrow \BLambda{V_{a,r}} \cong k_\alg(y)
  \]
is given by $x \mapsto \eta^\tau$, where $\eta \in k_\alg[y]$ is a particular
polynomial whose degree is the number of zeros of $\phi$ inside $D(a,r)$,
counted with multiplicity.

Now let $\psi \ne x$ be a monic irreducible polynomial, which corresponds to a
vector $\vec{w} \in T_\zeta$ (Prop.~\ref{prop:directions}). Then pushing $\psi$
through the above map gives $\psi(\eta^\tau)$. The distinct roots of this
polynomial correspond to places of $k_\alg(y)$, which in turn correspond to
tangent vectors at $\zeta_{a,r}$ that map to $\vec{w}$. Counting these roots,
we have proved that
\begin{equation}
  \label{eq:branch_multiplicity}
  m(\zeta, \vec{w}) = \deg_\sep(\psi) \cdot \deg_\sep(\eta) \cdot \tau^{(p)},
\end{equation}
where $\deg_\sep$ denotes the separable degree of a polynomial, and $\tau^{(p)}
= \tau / p^{\ord_p(\tau)}$ is the $p$-free part of $\tau$.  We record an
important consequence of this formula.

\begin{proposition}
  \label{prop:disk_number_jump}
  Let $V_n$ be an inductive valuation with key polynomial $\phi_n$ and key
  value $\mu_n \in \QQ$. Let $\tau$ be the relative ramification index of $V_n$
  over $V_{n-1}$. Let $\phi \in K[z]$ be a key polynomial for $V_n$ such that
  $\phi \not\sim \phi_n \pmod{V_n}$.  For each $\mu > V_n(\phi)$, we have 
  $|\pr^{-1}(\zeta_{\phi,\mu}) | > |\pr^{-1}(\zeta_{\phi_n,\mu_n})|$ if one of the
  following holds:
  \begin{enumerate}
    \item\label{item:jump1}  A residual polynomial for $\phi \pmod {V_n}$ has at least two
      distinct roots in $k_\alg$, or
    \item\label{item:jump2} $\tau > 1$ and $\tau$ is not a power of the residue characteristic of $K$.
  \end{enumerate}
\end{proposition}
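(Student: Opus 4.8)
The plan is to read off the number of points in each fiber of $\pr$ from the branch-multiplicity formula \eqref{eq:branch_multiplicity} together with the local picture developed just above it. Fix $\alpha \in \pr^{-1}(\zeta_{\phi_n,\mu_n})$; this corresponds to a disk $D(a,r)$ in the decomposition $D(\phi_n,\mu_n) = \bigcup_i D(a_i,r)$, and the $G_K$-orbit of $\alpha$ has exactly $|\pr^{-1}(\zeta_{\phi_n,\mu_n})|$ elements. Moving from $\zeta_{\phi_n,\mu_n}$ out along the tangent vector $\vec w \in T_{\zeta_{\phi_n,\mu_n}}$ determined by the key polynomial $\phi$ (note $\phi \not\sim \phi_n$, so indeed $\vec w$ is a \emph{new} direction at this type~II point, not the one leading to $\zeta_{\phi,\infty}$ via $\phi_n$), the point $\zeta_{\phi,\mu}$ lies inside $U_{\vec w}$ for every $\mu > V_n(\phi)$. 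The fiber $\pr^{-1}(\zeta_{\phi,\mu})$ therefore decomposes, via $G_K$-equivariance, as the union over the $|\pr^{-1}(\zeta_{\phi_n,\mu_n})|$ points of the fiber over $\zeta_{\phi_n,\mu_n}$ of the tangent vectors at those points that map to $\vec w$; that is,
\[
   |\pr^{-1}(\zeta_{\phi,\mu})| = |\pr^{-1}(\zeta_{\phi_n,\mu_n})| \cdot m(\zeta_{\phi_n,\mu_n}, \vec w).
\]
So the claim reduces to showing $m(\zeta_{\phi_n,\mu_n}, \vec w) > 1$ under either hypothesis.

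Now apply \eqref{eq:branch_multiplicity}: with $\psi$ the monic irreducible residual polynomial in $\ell[x]$ corresponding to $\vec w$ and $\eta \in k_\alg[y]$ the polynomial of degree equal to the number of zeros of $\phi_n$ in $D(a,r)$,
\[
  m(\zeta_{\phi_n,\mu_n}, \vec w) = \deg_\sep(\psi)\cdot \deg_\sep(\eta)\cdot \tau^{(p)} .
\]
Under hypothesis~\eqref{item:jump2}, $\tau > 1$ and $\tau$ is not a power of the residue characteristic, so $\tau^{(p)} = \tau/p^{\ord_p(\tau)} > 1$, whence $m > 1$ immediately. Under hypothesis~\eqref{item:jump1}, a residual polynomial for $\phi \pmod{V_n}$ has at least two distinct roots in $k_\alg$; I would argue that this residual polynomial is (a power of) $\psi$ pushed through $x \mapsto \eta^\tau$, i.e.\ it is $\psi(\eta^\tau)$ up to a unit, so its having $\ge 2$ distinct roots in $k_\alg$ forces $\psi(\eta^\tau)$ to have $\ge 2$ distinct roots, hence $\deg_\sep(\psi(\eta^\tau)) \ge 2$. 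Since $\deg_\sep$ is multiplicative along the substitution in the sense that the number of distinct roots of $\psi(\eta^\tau)$ equals $\deg_\sep(\psi)\cdot\deg_\sep(\eta)\cdot\tau^{(p)}$ (this is precisely what the computation preceding \eqref{eq:branch_multiplicity} established), we again get $m(\zeta_{\phi_n,\mu_n},\vec w) \ge 2$.

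The main obstacle I anticipate is bookkeeping rather than depth: one must be careful that the $\mu$-independence is genuine --- i.e.\ that for \emph{every} $\mu > V_n(\phi)$ the point $\zeta_{\phi,\mu}$ lies in the component $U_{\vec w}$ and that $m(\zeta_{\phi_n,\mu_n},\vec w)$ does not secretly depend on $\mu$ --- and that the residual polynomial of $\phi$ relative to $V_n$ really does correspond to the same irreducible $\psi \in \ell[x]$ that indexes $\vec w$, matching the conventions fixed in the ``Local geometry at a type~II point'' subsection. Once the identification ``residual polynomial of $\phi$ $\leftrightarrow$ $\psi$ $\leftrightarrow$ $\vec w$'' is pinned down via Proposition~\ref{prop:directions}, both cases fall out of \eqref{eq:branch_multiplicity} in one line each; the only subtlety in case~\eqref{item:jump1} is translating ``$\ge 2$ distinct roots of the residual polynomial'' into ``$\deg_\sep \ge 2$ of the relevant polynomial,'' which is immediate since distinct roots in $k_\alg$ are exactly what separable degree counts.
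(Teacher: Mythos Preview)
Your approach is the paper's: pick the tangent vector $\vec w$ at $\zeta=\zeta_{\phi_n,\mu_n}$ determined by the key polynomial $\phi$, use \eqref{eq:branch_multiplicity} to show $m(\zeta,\vec w)>1$ under either hypothesis, and then compare fiber sizes. Two small corrections are worth making. First, the paper only asserts (and only needs) the inequality
\[
|\pr^{-1}(\zeta_{\phi,\mu})|\ \ge\ m(\zeta,\vec w)\cdot|\pr^{-1}(\zeta)|,
\]
obtained because each $U_{\vec v}$ with $\pr_*(\vec v)=\vec w$ surjects onto $U_{\vec w}$ and hence contains \emph{at least} one preimage of $\zeta_{\phi,\mu}$; your equality can fail once $\mu$ is large enough that further branching occurs inside $U_{\vec v}$. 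Second, the residual polynomial of $\phi$ modulo $V_n$ lives in $\BLambda{V_n}\cong\ell(x)$ and \emph{is} the irreducible $\psi\in\ell[x]$ (up to a constant), not its pushforward $\psi(\eta^\tau)\in k_\alg[y]$; hypothesis~\eqref{item:jump1} therefore says directly that $\deg_\sep(\psi)\ge 2$, which is exactly how the paper reads it off, and your detour through $\psi(\eta^\tau)$ is a misidentification, though one that happens not to damage the conclusion.
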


\begin{proof}
  Write $\zeta = \zeta_{\phi_n,\mu_n}$ and let $\alpha \in \pr^{-1}(\zeta)$.
  Let $\vec{w} \in T_\zeta$ be the tangent vector corresponding to the
  equivalence class of key polynomials containing $\phi$. We claim that the
  branch multiplicity $m(\zeta,\vec{w}) > 1$. Indeed, let $\psi$ be
  a residual polynomial for $\phi$. Then condition~\ref{item:jump1} implies $\deg_\sep(\psi) > 1$,
  while condition~\ref{item:jump2} implies $\tau^{(p)} > 1$; the claim follows from
  \eqref{eq:branch_multiplicity}.

  Now let $\zeta' = \zeta_{\phi,\mu}$ for some $\mu > \mu_n$. Then $\zeta' \in
  U_{\vec{w}}$.  For each $\vec{v} \in T_\alpha$ such that $\pr_*(\vec{v}) =
  \vec{w}$, we have $\pr(U_{\vec{v}}) = U_{\vec{w}}$. It follows that
  \[
  \big|\pr^{-1}(\zeta')\big| \ge m(\zeta,\vec{w}) \ \big|\pr^{-1}(\zeta)\big|  > \big|\pr^{-1}(\zeta)\big|.
  \qedhere
  \]
\end{proof}


\section{Ramified Approximation}
\label{sec:approximation}

\noindent \textbf{Convention.} In this section, we write $k$ for the residue
field of $K$ (instead of $\tilde K$). Then the residue field of $\CK$ is
$k_\alg$.

We dispense with the (easier) case where $K$ has residue characteristic zero
first, and then we spend the remainder of the section on the case of positive
residue characteristic.

\begin{proposition}
  \label{prop:res_char_zero}
  Suppose the residue characteristic of $K$ is zero. Let
  \[
  f =c_n z^n + c_{n-1}z^{n-1} + \cdots + c_0
  \]
  be an irreducible polynomial in $K[z]$, and let $B \subset \CK$ be a disk
  containing the roots of $f$. Then $B$ contains the $K$-rational element
  $-c_{n-1} / (nc_n)$.
\end{proposition}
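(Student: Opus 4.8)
The element $\gamma := -c_{n-1}/(nc_n)$ is nothing but the arithmetic mean of the roots of $f$. The plan is to make this precise and then observe that an ultrametric disk is closed under arithmetic means \emph{when the number of summands is a unit}, which is exactly the situation forced by residue characteristic zero.

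First I would record the elementary facts. Since $K$ has residue characteristic zero, the canonical map $\ZZ \to \cO_K$ carries every nonzero integer to a unit; in particular $n$ is invertible in $K$ with $v(n) = 0$, so $\gamma \in K$. Next, writing $\alpha_1, \ldots, \alpha_n \in \CK$ for the roots of $f$ listed with multiplicity, we have $f = c_n \prod_{i=1}^n (z - \alpha_i)$, and comparing the coefficient of $z^{n-1}$ gives $c_{n-1} = -c_n \sum_i \alpha_i$, hence $\gamma = \tfrac1n \sum_{i=1}^n \alpha_i$.

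Now I would finish with the ultrametric averaging estimate. Write $B = D(a,r)$ with $a \in \CK$ and $r \in \Rinf$. By hypothesis $v(\alpha_i - a) \ge r$ for every $i$, so
\[
  v(\gamma - a) = v\!\left( \frac1n \sum_{i=1}^n (\alpha_i - a) \right) = -v(n) + v\!\left( \sum_{i=1}^n (\alpha_i - a) \right) \ge 0 + \min_{1 \le i \le n} v(\alpha_i - a) \ge r,
\]
using $v(n) = 0$ and the strong triangle inequality. Therefore $\gamma \in D(a,r) = B$, as claimed.

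There is no real obstacle here; the one point worth flagging is precisely why the hypothesis on the residue characteristic is used. In positive residue characteristic the step $-v(n) = 0$ fails, and the mean of points of a disk can genuinely escape the disk — which is the whole reason the positive-characteristic case (the bulk of the section) requires the much more elaborate machinery of MacLane approximants rather than this one-line centroid argument.
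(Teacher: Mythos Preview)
Your argument is correct and is essentially identical to the paper's own proof: both identify $-c_{n-1}/(nc_n)$ as the average of the roots and use the ultrametric inequality together with $v(n)=0$ to place this average inside $B$. The only cosmetic difference is that the paper takes one of the roots as the center of $B$, whereas you allow an arbitrary center in $\CK$; neither choice affects the estimate.
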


\begin{proof}
  If $a = a_1, \ldots, a_n$ are the distinct roots of $f$ in $K_\alg$, then
  \[
    f = c_n \prod (z - a_i) = c_n z^n - c_n (\sum a_i) z^{n-1} + \cdots,
  \]
  and $-c_{n-1} / (nc_n) = \frac{1}{n} \sum a_i \in K$. Write $b$ for this
  element.
  Let $r \in \RR$ be such that $B = D(a,r)$. Then
  \[
  v(b - a) = v \left( \frac{1}{n} \sum_{i=1}^n a_i - a\right)
  = v\left( \sum_{i=1}^n (a_i - a) \right) \ge \min_i v(a_i - a) \ge r.
  \]
  That is, $b \in B$. 
\end{proof}

Now we apply the method of approximants to locate weakly totally wildly
ramified elements inside $G_K$-invariant disks. 

\begin{theorem}
  \label{thm:WTR}
  Suppose the residue characteristic of $K$ is $p > 0$. Let $f \in K[z]$ be
  irreducible, and let $B \subset \CK$ be a disk containing the roots of $f$.
  Let $V_1 \vallt V_2 \vallt \cdots \vallt V_n = V_{f,\infty}$ be approximants
  to the $K$-semivaluation $V_{f,\infty}$, with key polynomials $\phi_1,
  \phi_2, \ldots, \phi_n = f$. Take $j \le n$ to be the maximum index such that
  \begin{enumerate}
  \item\label{item:WTR1} $\BLambda{V_j} \cap k_\alg$ is purely inseparable, and
  \item\label{item:WTR2} the ramification index of $V_i / v$ is a power of $p$ for all $i < j$.
  \end{enumerate}
  Then the roots of the key polynomial $\phi_j$ are contained in $B$, and $K[z]
  / (\phi_j)$ is weakly totally wildly ramified.
\end{theorem}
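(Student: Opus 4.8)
The plan is to read off the local invariants of the extension $L_j := K[z]/(\phi_j)$ from the stage‑$j$ MacLane data --- here I take the intermediate key polynomials, and $\phi_j$ itself, to be separable (the construction permits this for the $\phi_i$ with $i<n$; if $j=n$ one may first replace $f$ by a nearby separable polynomial with the same MacLane data, cf.\ the remark after Proposition~\ref{prop:separable_dense}), so that $G_K$ acts transitively on the roots of $\phi_j$ by Proposition~\ref{prop:preimage} and $L_j$ is a field --- and then to locate the diskoid $D(\phi_j,\mu_j)$ inside $B$ using the branch‑multiplicity formula~\eqref{eq:branch_multiplicity}. Throughout, write $\ell_i := \BLambda{V_i}\cap k_\alg$ for the constant residue field at stage $i$ (so $\ell_1=k$), write $\tau_i$ for the relative ramification index of $V_i$ over $V_{i-1}$ (with $\tau_1$ the ramification index of $V_1$ over $v$), and write $\vec{w}_i$ for the tangent vector at $\zeta_{i-1}$ cut out by $\phi_i$. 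The value group $\Gamma_i$ of $V_i$ contains $\ZZ$ with index $\tau_1\cdots\tau_i$, and I abbreviate $D_i:=D(\phi_i,\mu_i)$.

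First I would record a dictionary lemma. Since $V_{\phi_j,\infty}=[V_j,W(\phi_j)=\infty]$ is the valuation of $L_j$, writing a lift of $0\neq g\in L_j$ $\phi_j$‑adically gives $W(g)=V_{j-1}(g_0)\in\Gamma_{j-1}$, where $g_0$ is the unique term of degree $<\deg\phi_j$; as $g_0$ varies this value fills out $\Gamma_{j-1}$, and the residues of the unit $g_0$'s are exactly $\ell_j$. Hence the value group of $L_j$ is $\Gamma_{j-1}$ and $\widetilde{L_j}=\ell_j$, so $\ee(L_j/K)=\tau_1\cdots\tau_{j-1}$ is the ramification index of $V_{j-1}/v$ and $\ff(L_j/K)=[\ell_j:k]$. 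Condition~\ref{item:WTR1} says $\ell_j/k$ is purely inseparable, so $\ff_s(L_j/K)=1$ and $\ff(L_j/K)$ is a power of $p$; condition~\ref{item:WTR2} applied with $i=j-1$ says $\ee(L_j/K)$ is a power of $p$. These together say exactly that $L_j/K$ is weakly totally wildly ramified.

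For the disk statement, let $\ell$ be the largest index with $D_\ell$ a single disk, equivalently $|\pr^{-1}(\zeta_\ell)|=1$; this exists since $D_1$ is a disk, and $\ell<n$ (the case $\deg f=1$ being trivial). For $2\le i\le\ell$ we have $\zeta_i\in U_{\vec{w}_i}$ with $|\pr^{-1}(\zeta_i)|=|\pr^{-1}(\zeta_{i-1})|=1$, so the inequality inside the proof of Proposition~\ref{prop:disk_number_jump} forces $m(\zeta_{i-1},\vec{w}_i)=1$; by~\eqref{eq:branch_multiplicity} the residual polynomial $\psi_i$ is then purely inseparable over $\ell_{i-1}$ and $\tau_{i-1}$ is a power of $p$. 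Telescoping, $\ell_\ell/k$ is purely inseparable and $\tau_1,\ldots,\tau_{\ell-1}$ are powers of $p$, so conditions~\ref{item:WTR1} and~\ref{item:WTR2} hold at index $\ell$, whence $j\ge\ell$. Since every $D_i$ contains the roots of $f$ (evaluate $V_i\valle V_{f,\infty}$ at $\phi_i$), the minimal disk $D$ about those roots satisfies $D\subseteq D_\ell$; moreover $G_K$ permutes the at least two components of $D_{\ell+1}$ transitively, so each shares a root of $f$ with $D$ and hence lies in $D$, necessarily strictly, giving $D_{\ell+1}\subsetneq D$, and $\zeta_D$ --- which lies above $\zeta_\ell$ but, as $D\not\subseteq D_{\ell+1}$, not above $\zeta_{\ell+1}$ --- sits on the half‑open edge $[\zeta_\ell,\zeta_{\ell+1})$. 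If $j\ge\ell+1$ then $D_j\subseteq D_{\ell+1}\subsetneq D\subseteq B$, so the roots of $\phi_j$, which lie in $D_j$, lie in $B$.

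The crux is the case $j=\ell$, where I would rule out $D\subsetneq D_\ell$. If that held, $\zeta_D$ would lie strictly between $\zeta_\ell$ and $\zeta_{\ell+1}$, hence in $U_{\vec{w}_{\ell+1}}$, so $1=|\pr^{-1}(\zeta_D)|\ge m(\zeta_\ell,\vec{w}_{\ell+1})$; then~\eqref{eq:branch_multiplicity} makes $\psi_{\ell+1}$ purely inseparable over $\ell_\ell$ and $\tau_\ell$ a power of $p$, so conditions~\ref{item:WTR1} and~\ref{item:WTR2} also hold at index $\ell+1$, contradicting the maximality of $j=\ell$. Thus $D=D_\ell=D_j$ and again the roots of $\phi_j$ lie in $D\subseteq B$. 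The step I expect to be most delicate is the dictionary lemma --- in particular pinning down that $\ee(L_j/K)$ is the ramification index of $V_{j-1}$ rather than of $V_j$ --- together with the careful placement of $\zeta_D$ on the edge $[\zeta_\ell,\zeta_{\ell+1})$; both rest on Propositions~\ref{prop:disk_decomp} and~\ref{prop:general_ruth} and on unique path‑connectedness of $\Aff{K}$.
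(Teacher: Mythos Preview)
Your proof is correct and takes essentially the same approach as the paper's: both read off the local invariants of $K[z]/(\phi_j)$ via the augmentation $W=[V_{j-1},W(\phi_j)=\infty]$ (your dictionary lemma is exactly the paper's observation that $\BLambda{W}\cap k_\alg=\BLambda{V_j}\cap k_\alg$ and that $\ee(L_j/K)$ equals the ramification index of $V_{j-1}/v$), and both use the branch multiplicity formula~\eqref{eq:branch_multiplicity} through Proposition~\ref{prop:disk_number_jump} to tie conditions~\ref{item:WTR1}--\ref{item:WTR2} to the diskoid $D_j$ being a single disk. The only organizational difference is that the paper argues the containment by direct contradiction---supposing $D_j\not\subseteq B$, deducing that $D_j$ is then a single disk, and invoking maximality of $j$ to force a preimage jump in the direction of $\phi_{j+1}$---whereas you introduce the auxiliary index $\ell$, first prove $j\ge\ell$, and then split into the cases $j>\ell$ and $j=\ell$; the substance is the same.
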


\begin{remark}
  The same approach applies when the residue characteristic of $K$ is zero,
  though now we simply look for the largest index $j$ such that the key
  polynomial for $V_j$ is linear. If $\phi_j = z - b$, then $b \in B$ is the
  element we seek.
\end{remark}

\begin{proof}[Proof of Theorem~\ref{thm:WTR}]
  Observe that $\BLambda{V_n}$ is the residue field of the extension $K[z] /
  (f)$. If a root of $f$ generates a weakly totally wildly ramified extension
  of $K$, then $\BLambda{V_n}$ is purely inseparable over $k$, and the result
  follows. This includes the case where $f$ is purely inseparable
  (Prop.~\ref{prop:purely_insep}), so we may assume in the remainder of the
  proof that $f$ is not purely inseparable. In particular, $B$ has finite radius.
  
  The maximum index $j$ such that conditions~\ref{item:WTR1}
  and~\ref{item:WTR2} hold must satisfy $j < n$. For ease of notation, write
  $\phi = \phi_j$ and $\mu = \mu_j$, so that $V_j = [V_{j-1}, V_j(\phi) =
    \mu]$.  To see that $K[z] / (\phi)$ is weakly totally wildly ramified,
  consider the sequence of augmentations
  \[
     V_1 \vallt V_2 \vallt \cdots \vallt V_{j-1} \vallt W,
  \]
  where $W = [V_{j-1}, W(\phi) = \infty]$. Then $\BLambda{V_j} \cap k_\alg =
  \BLambda{W} \cap k_\alg$, and this extension depends only on the residue ring
  for $V_{j-1}$ and the key polynomial $\phi$ \cite[Thm.~12.1]{MacLane1}. In
  particular, since $\BLambda{W}$ is the residue field of $K[z] / (\phi)$, we
  see that it is purely inseparable. The ramification index of $V_{j-1} / v$ is
  a power of $p$, and it agrees with the ramification index of $K[z] / (\phi)$
  over $K$.

  It remains to show that the roots of $\phi$ lie inside the disk $B$. Define
  $D' = D(\phi,\mu)$.  If $D' \subset B$, then $B$ contains the roots of $\phi$
  (Lem.~\ref{lem:descending_disks}), and we are finished. So suppose $D'
  \not\subset B$. Since $D'$ and $D(\phi_1,\mu_1)$ both contain the roots of
  $f$ (Lem.~\ref{lem:descending_disks}), it follows that
  \[
     B \subsetneq D' \subseteq D(\phi_1,\mu_1),
  \]
  as we can write all three of these disks as $D(f,s)$ for various choices of $s$.
  Since $D'$ contains $B$, we see that $D(\phi_i,\mu_i)$ is a $G_K$-invariant
  disk for each $i \le j$. Now we use the maximality of the index $j$.

  Suppose first that condition~\ref{item:WTR1} fails for $j~+~1$. Then the
  residue field of $V_{j+1}$ has nontrivial separable degree over $k$, and
  $|\pr^{-1}(V_{\phi_{j+1},t})| > |\pr^{-1}(V_{\phi,\mu})|$ for all $t >
  V_j(\phi_{j+1})$ (Prop.~\ref{prop:disk_number_jump}). As $V_{j+1}$ and the
  infimum valuation on $B$ lie in the same direction from $V_j$, this would
  imply $B$ is a nontrivial Galois orbit of disks. But $B$ is a single disk, so
  this is a contradiction.

  Now suppose that condition~\ref{item:WTR2} fails for $j+1$, which means the relative
  ramification index of $V_j / V_{j-1}$ is not a power of $p$. We obtain a
  contradiction just as in the previous paragraph.

  Since either condition~\ref{item:WTR1} or~\ref{item:WTR2} must fail for the
  index $j+1$, we are forced to conclude that $D' \subseteq B$, and that the
  roots of $\phi$ are contained in $B$.
\end{proof}

\begin{example}
  \label{ex:not_approximant}
  We caution the reader that, with the setup of Theorem~\ref{thm:WTR}, the
  diskoid corresponding to the approximant $V_j$ may lie strictly inside the
  minimal disk about the roots of $f$. For example, take $K = \QQ_2$ and
  consider the polynomial
  \[
      f(z) = z^4 + 20z^2 + 292.
      \]
  The method of approximants gives $V_1 = [\ord_2, V_1(z) = 1/2]$, $V_2 = [V_1,
    V_2(z^2 + 2) = 4]$, and $V_3 = V_{f,\infty}$. Theorem~\ref{thm:WTR}
  tells us that $\sqrt{-2}$ lies inside $B$, the minimal disk about the roots
  of $f$. This minimal disk is $B = D(z^2 + 2, 3)$, while the diskoid for $V_2$
  is $D(z^2 + 2, 4)$. See Figure~\ref{fig:not_approximant}.
\end{example}

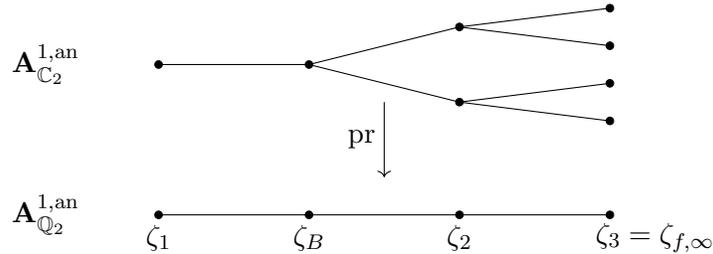
\begin{figure}[!htb]
  \begin{center}
\begin{tikzpicture}

  \draw (0,0) -- (6,0);
  \filldraw (0,0) circle (0.05cm);
  \filldraw (2,0) circle (0.05cm);
  \filldraw (4,0) circle (0.05cm);
  \filldraw (6,0) circle (0.05cm);
  \node at (0, -0.3) {$\zeta_1$};
  \node at (2, -0.3) {$\zeta_B$};  
  \node at (4, -0.3) {$\zeta_2$};
  \node at (6.6, -0.3) {$\zeta_3 = \zeta_{f,\infty}$};    
  \node at (-1.5,0) {$\Aff{\QQ_2}$};
  
  \draw (0,2) -- (2,2);
  \draw (2,2) -- (4,2.5);
  \draw (2,2) -- (4,1.5);
  \draw (4,2.5) -- (6,2.75);
  \draw (4,2.5) -- (6,2.25);
  \draw (4,1.5) -- (6,1.75);
  \draw (4,1.5) -- (6,1.25);  
  \filldraw (0,2) circle (0.05cm);
  \filldraw (2,2) circle (0.05cm);
  \filldraw (4,1.5) circle (0.05cm);  
  \filldraw (4,2.5) circle (0.05cm);
  \filldraw (6,2.75) circle (0.05cm);  
  \filldraw (6,2.25) circle (0.05cm);
  \filldraw (6,1.75) circle (0.05cm);  
  \filldraw (6,1.25) circle (0.05cm);
  \node at (-1.5,2) {$\Aff{\CC_2}$};  

  \draw[->] (3,1.5) -- (3,0.5);
  \node at (2.7,1) {$\pr$};

\end{tikzpicture}
  \end{center}
  \caption{A diagram of the positions of approximants for $f(z) = z^4 + 20z^2 +
    292$ in $\Aff{\QQ_2}$ and their pre-images in $\Aff{\CC_2}$. Here $\zeta_i$
    corresponds to the approximant $V_i$, while $\zeta_B$ corresponds to the
    infimum valuation on $B$. The minimum disk about the roots of $f$ is $D(z^2
    + 2,3)$; the corresponding point downstairs, $\zeta_B$ is not an
    approximant. See Example~\ref{ex:not_approximant}}
\label{fig:not_approximant}
\end{figure}

\begin{proof}[Proof of Ramified Approximation]
  Suppose first that $K$ has residue characteristic zero. The result follows
  immediately from Proposition~\ref{prop:res_char_zero}.

  Now suppose that $K$ has residue characteristic $p > 0$. Taking $B = D$,
  Theorem~\ref{thm:WTR} gives a polynomial $\phi = \phi_j$ whose roots all lie
  in $D$. Let $\alpha \in K_\alg$ be a root of $\phi$. Then $K(\alpha) / K$ is
  weakly totally wildly ramified. The construction of the approximants $V_1
  \vallt V_2 \vallt \cdots \vallt V_n = V_{f,\infty}$ gives a sequence of
  residue extensions $k \subset k_1 \subset k_2 \subset \cdots \subset
  \ell$, where the residue field of $K(\alpha)$ is $k_j$
  \cite[Thm.~12.1]{MacLane1}. This proves the claim about residue fields. A
  similar statement holds for value group extensions \cite[\S6]{MacLane2}.

  Finally, we note that in choosing key polynomials, we always have the
  flexibility to vary within an equivalence class. In particular, by adding an
  appropriate linear term $cz$ with $v(c)$ sufficiently large, we may assume
  that $\phi$ is separable (Prop.~\ref{prop:separable_dense}). That is, we may
  arrange for the extension $K(\alpha) / K$ to be separable.
\end{proof}

The Approximation Theorem, as stated, is sufficient to deduce Theorem~B. In
order to obtain some of the degree bounds in Theorem~A, we need to look more
closely at certain quartic polynomials. If $f \in K[z]$ is a separable
polynomial, we say that the roots of $f$ are \textbf{equispaced} if $v(\alpha -
\beta)$ is the same for any pair of distinct roots $\alpha,\beta$ of $f$.

\begin{lemma}
  \label{lem:equispaced}
  Suppose $K$ has residue characteristic~2, and let $f\in K[z]$ be a separable
  quartic polynomial with equispaced roots. Then either $f$ admits a weakly
  totally ramified root, or else there is $\alpha \in K_\alg$ such that
  \begin{itemize}
  \item $K(\alpha)/K$ is separable and weakly totally ramified of degree dividing~2;
  \item $\alpha$ lies in the minimal disk in $\CK$ containing the roots of $f$;
    and
  \item $v(\beta-\gamma) \in v(K(\alpha)^\times)$ for every distinct pair of
    roots $\beta, \gamma$ of $f$.
  \end{itemize}
\end{lemma}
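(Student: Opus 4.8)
The plan is to run MacLane's method of approximants on $f$, locate the approximant whose diskoid is the minimal disk about the roots, bound its degree using the equispaced hypothesis, and then arrange the value-group condition directly. Write $\rho = v(\beta-\gamma)$ for the common valuation of the differences of distinct roots of $f$; then $\rho \in \QQ$, the minimal disk containing the roots of $f$ is $D := D(\alpha_1,\rho)$ for any root $\alpha_1$ (of radius exactly $\rho$), and $D$ is $G_K$-invariant because it is canonically attached to the $G_K$-stable set of roots. The final bullet of the lemma asks only that $\rho \in v(K(\alpha)^\times)$, since all pairs of roots of $f$ are $\rho$ apart; moreover an irreducible factor of $f$ of degree $\ge 2$ again has equispaced roots and the same minimal disk $D$ (two disks of radius $\rho$, one inside the other, coincide), while a linear factor immediately yields a root in $K \cap D$. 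So I may assume $f$ is irreducible of degree $4$.

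Apply Theorem~\ref{thm:WTR} with $B = D$. This returns the largest index $j$ satisfying conditions~\ref{item:WTR1}--\ref{item:WTR2}, together with a key polynomial $\phi := \phi_j$ whose roots all lie in $D$ and such that $L := K[z]/(\phi)$ is weakly totally wildly ramified; perturbing $\phi$ within its $V_{j-1}$-equivalence class (Proposition~\ref{prop:separable_dense}) makes $L/K$ separable, and the proof of Theorem~\ref{thm:WTR} shows $D(\phi,\mu_j) \subseteq D$. The equispaced hypothesis now forces $D(\phi,\mu_j)$ to have a rigid shape: being an approximant to $\zeta_{f,\infty}$ it equals $D(f,s)$ for some $s$, hence by Proposition~\ref{prop:disk_decomp} it is a union of disks of a common radius $r$ centered at the roots of $f$, which — as those roots are pairwise $\rho$ apart — is a single disk when $r \le \rho$ and a union of four pairwise disjoint disks when $r > \rho$. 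In the four-disk case the number of pieces, at most $\deg\phi$ since they are centered at roots of $\phi$, equals $4$, so $\deg\phi = 4$; since the key-polynomial degrees form a chain dividing $4$, and since $\deg\phi_j = 4$ with $j < n$ is impossible — such an augmentation $V_j \to V_{j+1}$ would have $\deg\phi_{j+1} = \deg\phi_j$, hence could neither ramify $V_j$ over its predecessor nor enlarge the residue constant field at stage $j+1$, so conditions~\ref{item:WTR1}--\ref{item:WTR2} would persist at $j+1$, contradicting maximality of $j$ — we get $\phi = \phi_n = f$, and then a root of $f$ generates a weakly totally wildly (in particular weakly totally) ramified degree-$4$ extension: the first alternative of the lemma holds. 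Otherwise $D(\phi,\mu_j) = D$, $r = \rho$, $\deg\phi \in \{1,2\}$, and $\mu_j = V_j(\phi) = (\deg\phi)\,\rho$ by Proposition~\ref{prop:disk_decomp}.

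Now let $\alpha$ be a root of $\phi$, so $[K(\alpha):K] = \deg\phi$ divides $2$, $K(\alpha)/K$ is separable and weakly totally ramified, and $\alpha \in D$; it remains to place $\rho$ in $v(K(\alpha)^\times)$. Suppose first $\deg\phi = 1$, so $D = D(\alpha,\rho)$ with $\alpha \in K$. Some root $\alpha_i$ of $f$ has $v(\alpha_i - \alpha) = \rho$ (else $D$ is not the minimal disk), and $\alpha_i - \alpha \in K(\alpha_i)$, so the denominator of $\rho$ divides $e(K(\alpha_i)/K)$, which divides $[K(\alpha_i):K] = 4$; hence it is $1$, $2$, or $4$. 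If $1$, then $\rho \in \ZZ = v(K^\times)$ and we are done. If $4$, then $K(\alpha_i)/K$ is totally, hence weakly totally, ramified and the first alternative holds. If $2$, then $\rho \in \frac{1}{2}\ZZ$, and I replace $\alpha$ by an element $\alpha'$ generating a totally ramified separable quadratic extension $K'/K$ — one exists in residue characteristic $2$ — rescaled so that $v(\alpha' - \alpha) = \rho$, which places $\alpha'$ in $D(\alpha,\rho) = D$ with $v(K(\alpha')^\times) = \frac{1}{2}\ZZ \ni \rho$. When $\deg\phi = 2$ one argues analogously, comparing the denominator of $\rho$ (controlled via $\mu_j = 2\rho$ and the valuations $v(\alpha_i - b)$ for $b$ a root of $\phi$) against the ramification indices occurring in the fields $K(\alpha_i)$ and their compositum with $K(\alpha)$, peeling off unramified parts with Lemma~\ref{lem:unramified} and, when the approximant's value group is too small, again manufacturing a totally ramified separable quadratic element inside $D$ — concluding in each case with a suitable $\alpha$ or with a weakly totally ramified root of $f$.

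I expect the \textbf{main obstacle} to be precisely the $\deg\phi = 2$ part of the last step: the root of the approximant need not have $\rho$ in its value group, so one must produce a totally ramified separable quadratic element \emph{inside} $D$ even when the center of $D$ is irrational over $K$, and rule out — for an equispaced quartic with no weakly totally ramified root — the configurations where this fails. This should require exploiting the equispaced hypothesis more than in the two uses above, e.g. through the four distinct residues the roots of $f$ cut out on the reduction of $D$, and (in equal characteristic $2$) through the identity $v(b_1 - b_2) = v(p) \in \ZZ$ for the roots of a separable monic quadratic $z^2 + pz + q$, which constrains how the two disks comprising $D(\phi,\mu_j)$ are positioned inside $D$.
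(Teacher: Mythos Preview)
Your reduction to $f$ irreducible and the trichotomy on $\deg\phi_j$ are reasonable, and the $\deg\phi_j = 4$ case can be completed (though not quite as you wrote: the claim that $\deg\phi_{j+1} = \deg\phi_j$ blocks condition~\ref{item:WTR2} from failing at $j+1$ is wrong, since that condition concerns the ramification of $V_j/v$, which is determined by $\mu_j$ and not by the next key polynomial --- instead, use that Ramified Approximation gives $\widetilde{K(\alpha)} \subset \widetilde{K(\beta)}$ and $v(K(\alpha)^\times) \subset v(K(\beta)^\times)$, so when $[K(\alpha):K] = [K(\beta):K] = 4$ both inclusions are equalities and $K(\beta)/K$ is itself weakly totally ramified).

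The real gap is the $\deg\phi_j = 2$ case, and your sketch does not close it. From $D(\phi,\mu_j) = D$ you can only conclude $v(a_1 - a_2) \ge \rho$ for the two roots $a_1,a_2$ of $\phi$, not equality, so nothing yet places $\rho$ in $v(K(\alpha)^\times)$. Your fallback --- manufacturing a totally ramified separable quadratic element inside $D$ --- works in the $\deg\phi_j = 1$ case because $D$ then has a $K$-rational center to translate from, but when $K(\alpha)/K$ has degree~$2$ with purely inseparable residue (so $v(K(\alpha)^\times) = \ZZ$), there is no such center, and it is not clear how to produce the required element inside $D$. The paper instead proves $v(a_1 - a_2) = \rho$ directly. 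Under the assumption that $f$ has no weakly totally ramified root, the separable residue degree of $K[z]/(f)$ is exactly~$2$; equispacedness forces the branch multiplicity at the point $\zeta$ corresponding to $D$ (in the direction of $\zeta_{f,\infty}$) to be~$4$, since multiplicity~$2$ would split the four roots into two pairs. Feeding $m(\zeta,\vec w)=4$ and $\deg_\sep(\psi)=2$ into the branch multiplicity formula~\eqref{eq:branch_multiplicity} yields $\deg_\sep(\eta)=2$, which says precisely that the two roots of $\phi$ lie in distinct maximal open subdisks of $D$. Writing $\phi = z^2 + bz + c$ with $b,c \in K$, the difference of its roots is $2\alpha + b \in K(\alpha)$, and hence $\rho = v(2\alpha + b) \in v(K(\alpha)^\times)$.
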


\begin{proof}
  Assume that $f$ has no weakly totally ramified root. Without loss of
  generality, we may assume that $f$ is monic. Then one of the following must
  be true:
  \begin{enumerate}
    \item\label{item:equispaced1} $f = f_1 f_2$ for quadratic polynomials $f_1, f_2 \in K[x]$ and
      $K[z]/(f_i)$ is unramified for $i = 1, 2$;
    \item\label{item:equispaced2}  $f$ is irreducible and $K[z] / (f)$ is unramified; 
    \item\label{item:equispaced3} $f$ is irreducible and $K[z] / (f)$ has ramification index~2 and
      separable residue degree $2$; or
    \item\label{item:equispaced4} $f$ is irreducible and $K[z] / (f)$ has separable residue
      degree~2 and inseparable residue degree~2.
  \end{enumerate}
  Let $D$ be the minimal disk about the roots of $f$. By Ramified
  Approximation, we may choose $\alpha \in D$ such that $K(\alpha) / K$ is
  separable and weakly totally ramified of degree dividing~2. All that remains
  to show is that the difference between any pair of roots has rational
  valuation over $K(\alpha)$.

  Suppose first that we are in case~\ref{item:equispaced1}
  or~\ref{item:equispaced2}. Then a root of $f$ generates an unramified
  extension, and so we may take $\alpha \in K$
  (Lem.~\ref{lem:unramified}). Define $g(z) = f(z+\alpha)$. Since the roots of
  $f$ are equispaced, the roots of $g$ must all have the same valuation $s$. If
  $s \not\in \ZZ$, then a root of $g$ generates a ramified extension of $K$, a
  contradiction.

  Now suppose that we are in case~\ref{item:equispaced3}
  or~\ref{item:equispaced4}. Let $B$ be the minimal disk containing the roots
  of $f$. Write $\zeta \in \Aff{K}$ for the point corresponding to the infimum
  valuation on $B$, and let $\xi \in \Aff{\CK}$ be the unique point such
  that $\pr(\xi) = \zeta$. Let $\vec{w} \in T_\zeta$ be the tangent vector such
  that $U_{\vec{w}}$ contains $\zeta_{f,\infty}$. Since the roots of $f$ are
  equispaced, there are precisely four vectors $\vec{v}_i \in T_\xi$ such that
  $\pr_*(\vec{v}_i) = \vec{w}$.

  Let $V_1 \vallt V_2 \vallt \cdots \vallt V_n = V_{f,\infty}$ be a minimal
  representation of $V_{f,\infty}$ as an inductive semivaluation
  (Prop.~\ref{prop:some_indval}). The key polynomials of a minimal
  representation have strictly increasing degrees \cite[Lem.~15.1]{MacLane1},
  so $n = 2$ or $n = 3$. Since the separable degree of the residue extension of
  $K[z]/(f)$ is nontrivial, there is an index $j$ such that a residual
  polynomial for the key polynomial $\phi_{j+1}$ has nontrivial separable
  degree. If $\zeta_j \in \Aff{K}$ corresponds to $V_j$ and $\vec{u} \in
  T_{\zeta_j}$ corresponds to (the equivalence class of) $\phi_{j+1}$, the
  branch multiplicity formula \eqref{eq:branch_multiplicity} shows that
  $m(\zeta_j,\vec{u}) > 1$. If the branch multiplicity at $\zeta_j$ is~2, then
  there are precisely two connected components of $\Aff{\CK} \smallsetminus
  \pr^{-1}(\zeta_j)$ containing the four roots of $f$, which violates the fact
  that the roots are equispaced. Thus $m(\zeta_j,\vec{u}) = 4$. As $f$ has
  degree~4, this can only be true for one of the approximants to
  $V_{f,\infty}$, so we conclude that $\zeta_j = \zeta$ and $\vec{u} =
  \vec{w}$.  Moreover, since the separable degree of the residue extension of
  $K[z]/(f)$ is~2, we conclude from \eqref{eq:branch_multiplicity} that
  $\deg_\sep(\eta) = \deg_\sep(\psi) = 2$, where $\psi$ is the residual
  polynomial for $\phi_{j+1}$.  It follows that $n = 3$, that $j = 2$, that
  $\phi_2$ is quadratic, and that $\phi_{j+1} = f$. In particular, $V_2 / v$
  either has ramification index~2 or purely inseparable residue extension of
  degree~2.

  Now we know that $V_3 = V_{f,\infty}$ and that the key polynomial $\phi_2$
  for $V_2$ is quadratic. We take $\alpha$ to be a root of $\phi_2$. Let $r$ be
  the valuation of the difference between distinct roots of $f$. Using the
  notation of the previous paragraph, the fact that $\eta$ has separable
  degree~2 means that $\phi_2$ has roots in exactly two distinct maximal open
  subdisks of $D(\alpha,r)$. That is, the valuation of the difference between
  the roots of $\phi_2$ is $r$. To conclude, write $\phi_2(z) = z^2 + b z + c$.
  The two roots of $\phi_2$ are $\alpha$ and $-\alpha - b$, so the valuation of
  the difference is
  \[
     r = v(2\alpha+b) \in v(K(\alpha)^\times),
  \]
  as desired.
\end{proof}
  

\section{Elliptic Curves}
\label{sec:elliptic}

We will give the proof of Theorem~A in three parts:
\begin{enumerate}
\item Residue characteristic different from~2 or~3 in \S\ref{sec:charnot23};
\item Residue characteristic different from~2 in \S\ref{sec:charnot2}; and
\item Residue characteristic~2 in \S\ref{sec:char2}.
\end{enumerate}
The first part is completely elementary.


\subsection{Residue characteristic different from~2 or~3}
\label{sec:charnot23}

\begin{theorem}
  Suppose that $K$ has residue characteristic not equal to~2 or~3. Let $E_{/K}$
  be an elliptic curve. Then $E$ attains semistable reduction over a separable
  totally ramified extension $L$ whose degree is a proper divisor of~12. If $E$
  has potential multiplicative reduction, we may $[L : K] = 1$ or~$2$.
\end{theorem}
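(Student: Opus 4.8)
The plan is to sidestep Ramified Approximation entirely in this range and argue by hand with a short Weierstrass model. Since $\mathrm{char}(\tilde K)\neq 2,3$, the elements $2$, $3$ --- and hence $6$, $48$, and $1728$ --- are units in $\cO_K$, so completing the square and the cube puts $E$ in the form $y^{2}=x^{3}+Ax+B$ with $A,B\in\cO_K$; I would work throughout with the standard invariants $c_{4}=-48A$, $c_{6}=-864B$, and $\Delta$, which satisfy $1728\,\Delta=c_{4}^{3}-c_{6}^{2}$. Put $n=v(\Delta)\ge 0$. Recall that $E$ has potential good reduction precisely when $v(j)\ge 0$, equivalently $3v(c_{4})\ge n$, and potential multiplicative reduction otherwise; I would treat the two cases separately.

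For potential multiplicative reduction (so $v(j)<0$ and $j\notin\{0,1728\}$), I would use that $E$ is a quadratic twist $E_{q}^{(d)}$ of a Tate curve $E_{q}$ with the same $j$-invariant, where $E_{q}$ has split multiplicative reduction. If $d$ is a square in $K^{\times}$ or cuts out the unramified quadratic extension, then $E$ already has multiplicative --- hence semistable --- reduction and $L=K$ works. Otherwise $L:=K(\sqrt d)$ is a ramified quadratic extension, which is separable because $\mathrm{char}(\tilde K)\neq 2$, and over $L$ the twist becomes trivial, so $E_{L}$ has split multiplicative reduction. In every case $[L:K]\in\{1,2\}$ and $L/K$ is separable and totally ramified.

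For potential good reduction I would take $e=12/\gcd(n,12)$ and $L=K(\pi^{1/e})$, where $\pi$ is a uniformizer of $K$. Then $z^{e}-\pi$ is Eisenstein, so $L/K$ is totally ramified of degree $e$; since the prime factors of $e$ lie in $\{2,3\}$ while $\mathrm{char}(\tilde K)\neq 2,3$, the extension is tame and therefore separable. Over $L$ one has $v_{L}(\Delta)=en$, which is divisible by $12$, so the substitution $x\mapsto u^{2}x$, $y\mapsto u^{3}y$ with $v_{L}(u)=n/\gcd(n,12)$ yields a model of discriminant valuation $0$. I would then verify this model is still integral: its coefficients $Au^{-4}$ and $Bu^{-6}$ have valuations $e(v(A)-n/3)$ and $e(v(B)-n/2)$, the first nonnegative by $3v(c_{4})\ge n$, and the second because $2v(c_{6})<n$ would force $v(c_{4}^{3}-c_{6}^{2})=2v(c_{6})<n$, contradicting $v(1728\,\Delta)=n$. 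An integral Weierstrass model with unit discriminant over a base of residue characteristic $\neq 2,3$ has smooth reduction, so $E_{L}$ acquires good reduction.

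The one assertion needing a genuine (but short) argument --- the main obstacle, such as it is --- is that $e$ is a \emph{proper} divisor of $12$, equivalently $\gcd(n,12)>1$. The plan there: if $\gcd(n,6)=1$, then $3\nmid n$ together with $3v(c_{4})\ge n$ forces $3v(c_{4})>n$, while $2\nmid n$ rules out $2v(c_{6})=n$ and (as above) $2v(c_{6})<n$ is impossible, so $2v(c_{6})>n$ as well; then $v(c_{4}^{3}-c_{6}^{2})\ge\min\{3v(c_{4}),2v(c_{6})\}>n$ contradicts $v(1728\,\Delta)=n$. Hence $\gcd(n,12)\ge 2$ and $e\le 6<12$; when $E$ has good reduction already, $\gcd(n,12)=12$, $e=1$, and $L=K$. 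Everything else is bookkeeping with Weierstrass invariants, which is why this residue characteristic range is completely elementary; I would likely streamline the potential multiplicative case still further in the final write-up by invoking the classification of reduction types of a minimal model directly.
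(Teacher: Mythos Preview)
Your argument is correct, but it takes a somewhat different route from the paper's. The paper also begins with a short Weierstrass model, but it sets $n=\min\bigl(3v(A),\,2v(B)\bigr)$ rather than $n=v(\Delta)$, and it does \emph{not} split into the potential-good and potential-multiplicative cases. With this choice of $n$, the scaling $(x,y)\mapsto(\pi^{n/6}x,\pi^{n/4}y)$ over $L=K(\pi^{1/g})$, $g=12/\gcd(n,12)$, automatically yields an integral model; then either $v(\Delta')=0$ (good reduction) or $v(\Delta')>0$ with $v(A')=0$, which forces a node and hence multiplicative reduction. The proper-divisor claim is immediate from the definition of $n$ (either $n=0$ or $2\mid n$ or $3\mid n$), and in the multiplicative case one reads off $6\mid n$, so $g\mid 2$, directly. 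Your version trades this uniformity for a cleaner conceptual split: you invoke Tate's parametrization to handle the multiplicative case via quadratic twists, and then in the good case you must supply a separate (short) numerical argument that $\gcd(v(\Delta),6)>1$ and that $Au^{-4},Bu^{-6}$ stay integral. Both arguments are elementary in this residue-characteristic range; the paper's is more self-contained (no Tate curve), while yours makes the role of the two reduction types more transparent.
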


\begin{proof}
  Our hypothesis on the residue field ensures that $K$ itself has
  characteristic different from 2 or 3, and so we may assume that $E$ is given
  by a short Weierstrass equation of the form
  \[
    E \colon y^2 = x^3 + Ax + B,
    \]
  with $A,B \in K$. Define
  \[
     n = \min\left(3v(A),2v(B)\right).
  \]
  The discriminant $\Delta_E = -16(4A^3 + 27B^2)$ satisfies $v(\Delta_E) \geq
  n$.  Define $L = K(\pi^{1/g})$, where $g = 12 / \gcd(n,12)$ and $\pi$ is a
  uniformizer for $K$. Note that $L/K$ is separable and totally
  ramified. Moreover, since $n$ either vanishes, or else is divisible by at
  least one of~2 or~3, we find that the degree $[L : K]$ is in the set
  $\{1,2,3,4,6\}$.

  Set $u = \pi^{n/12} \in L$. Making the change of coordinates $x \mapsto u^2
  x$ and $y \mapsto u^3 y$, we obtain a new curve
  \[
    E'_{/L} \colon y^2 = x^3 + A'x + B',
  \]
  where
  \[
   v(A') = v(A) - 4v(u) = v(A) - \frac{n}{3} \geq 0 \quad \text{and} \quad
   v(B') = v(B) - 6v(u) = v(B) - \frac{n}{2} \geq 0.
   \]
   Hence $E'$ has coefficients in the valuation ring of $L$. The new
   discriminant satisfies
   \[
   v(\Delta_{E'}) = v(\Delta_E) - 12v(u) = v(\Delta_E) - n \geq 0.
   \]
  If $v(\Delta_E) = n$, then $E'$ has good reduction. Otherwise, we find that
  $v(\Delta_E) > n = 3v(A) = 2v(B)$. In particular, $v(A') = 0$, which implies
  that the reduction $\tilde E'$ has a nodal singularity
  \cite[III.1.4]{Silverman_AEC_2009}. Since $n = 3v(A) = 2v(B)$, we find that $6
  \mid n$, and hence $g = [L : K]$ divides~$2$.
\end{proof}


\subsection{Residue characteristic different from~2}
\label{sec:charnot2}

Throughout this entire section, we assume that $K$ has residue characteristic
different from~2. Our goal is to prove the following:

\begin{theorem}
\label{Thm:char-not-2}
Let $E_{/K}$ be an elliptic curve. There is a separable weakly totally ramified
extension $L/K$ of degree dividing~12 such that $E$ admits a semistable model
over $L$. If $E$ has potential multiplicative reduction, then we may take the
degree of $L/K$ to divide~2.
\end{theorem}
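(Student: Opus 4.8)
The plan is to dispose of potential multiplicative reduction first, then treat potential good reduction, splitting the latter according to whether $\mathrm{char}(\tilde K) = 3$. When $\mathrm{char}(\tilde K) \ne 2, 3$ both $2$ and $3$ are units in $\cO_K$, so $E$ has a short Weierstrass equation $y^2 = x^3 + Ax + B$ over $K$ and the result is immediate from the theorem of \S\ref{sec:charnot23} (which already produces a separable totally ramified extension of degree dividing~$12$ in that case). So the real content is the case $\mathrm{char}(\tilde K) = 3$, where one may still complete the square but not the cube, and where Ramified Approximation enters.

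\textbf{Potential multiplicative reduction.} Here I would use the Tate curve. If $E$ has split or non-split multiplicative reduction over $K$ it is already semistable, so take $L = K$. Otherwise $v(j_E) < 0$, and $E$ is a quadratic twist $E_q^\chi$ of a Tate curve $E_q$ by a quadratic character $\chi$ of $G_K$; since $E$ has additive reduction, $\chi$ is neither trivial nor unramified (an unramified quadratic twist of a curve with multiplicative reduction again has multiplicative reduction). Thus $\chi$ corresponds to a ramified quadratic extension $L = K(\sqrt d)$, which is separable and totally ramified because $\mathrm{char}(\tilde K) \ne 2$; over $L$ we have $E_L \cong (E_q)_L$, a Tate curve, hence multiplicative reduction. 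This gives $[L:K] \mid 2$.

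\textbf{Potential good reduction, $\mathrm{char}(\tilde K) = 3$.} Write $E\colon y^2 = (x-e_1)(x-e_2)(x-e_3)$ with $e_1, e_2, e_3 \in K_\alg$ the $x$-coordinates of the nontrivial $2$-torsion; these are permuted by $G_K$, each generates an extension of degree $1$, $2$, or~$3$, and the minimal disk $D$ containing them is $G_K$-invariant. The first step is the \emph{equispaced dichotomy}: $E$ acquires good reduction over some finite $M/K$, so its reduced model over $\cO_M$ is a smooth cubic and the $2$-torsion $x$-coordinates of \emph{that} model reduce to three distinct units; comparing the $M$-model with our $K$-model by a Weierstrass change of coordinates $(x,y) \mapsto (u^2 x + s, u^3 y)$ gives $v(e_i - e_j) = 2v(u) =: r$, the same positive rational for every pair. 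The second step applies Ramified Approximation to an $e_i$ of minimal degree and to $D$: we get $\alpha \in D$ with $K(\alpha)/K$ separable and weakly totally wildly ramified, $\tilde K \subset \widetilde{K(\alpha)} \subset \widetilde{K(e_i)}$, $v(K(\alpha)^\times) \subset v(K(e_i)^\times)$, and $[K(\alpha):K]$ a power of~$3$ dividing $[K(e_i):K]$, hence $[K(\alpha):K] \in \{1,3\}$. The third step translates $E$ by $\alpha$ and rescales: set $L = K(\alpha)(u)$, where $u$ is chosen with $v(u) = r/2$ by adjoining an appropriate tamely (hence totally) ramified Kummer extension of $K(\alpha)$ of degree coprime to~$3$. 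Under $(x,y) \mapsto (u^2 x + \alpha, u^3 y)$ the $2$-torsion $x$-coordinates become $(e_i - \alpha)/u^2$, which are integral with pairwise unit differences — the equispaced condition, together with the observation that at most one of the $e_i$ can be closer to $\alpha$ than $r$ — so the transformed model has smooth reduction; since $v(\Delta_E) = 6r$ and the discriminant scales by $u^{-12}$, it is integral with unit discriminant, hence good reduction. Finally $L/K$ is separable, $\ff_s(L/K) = \ff_s(L/K(\alpha)) \cdot \ff_s(K(\alpha)/K) = 1$ so $L/K$ is weakly totally ramified, and a bookkeeping argument — tracking $r \in v(K(E[2])^\times)$, so that $r$ has denominator dividing~$6$, against the constraint on $\widetilde{K(\alpha)}$ forced by the inclusions in Ramified Approximation — shows the Kummer step can always be taken of degree dividing $12/[K(\alpha):K]$, whence $[L:K] \mid 12$.

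\textbf{Main obstacle.} The crux is that last degree bound. A naive rationalization of the $2$-torsion can force an extension whose residue field grows inseparably, and then nothing forces the total degree to divide~$12$; Ramified Approximation is precisely what trades this inseparable residue growth for bounded wild ramification, and its companion inclusions $\widetilde{K(\alpha)} \subset \widetilde{K(e_i)}$ and $v(K(\alpha)^\times) \subset v(K(e_i)^\times)$ are exactly what make the bookkeeping close (for instance ruling out $[K(\alpha):K] = 3$ together with $\ee(K(\alpha)/K) = 1$ while $r$ has denominator~$12$). A secondary, more mechanical difficulty is nailing down the equispaced dichotomy and checking the edge cases of the rescaling, such as when $\alpha$ is abnormally close to one root.
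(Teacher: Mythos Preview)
Your approach is correct in outline but genuinely different from the paper's, and the bookkeeping you flag as the crux is more delicate than your sketch suggests.

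\textbf{Comparison with the paper.} The paper does \emph{not} invoke Ramified Approximation for residue characteristic $\ne 2$. Instead it runs a direct case split on the cubic $f$: (1) $f$ has a $K$-rational root (translate, rescale over a degree-$\le 4$ extension); (2) $f$ irreducible with $K[x]/(f)$ weakly totally ramified (adjoin a root, reduce to Case~1); (3) $f$ irreducible with $K[x]/(f)$ unramified (use only Lemma~\ref{lem:unramified} to find a $K$-point in the minimal disk, then rescale over a degree-$\le 2$ extension); (4) potential multiplicative reduction, handled via an explicit rational $2$-torsion point (Proposition~\ref{prop:rational_two_torsion}) rather than the Tate curve. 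The paper's route is more elementary; yours is more uniform. Your Tate-curve argument for the multiplicative case is perfectly fine and arguably cleaner.

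\textbf{The gap in your bookkeeping.} The inclusions from Ramified Approximation that you invoke are vacuous precisely when they are most needed. If the theorem hands you $\alpha \in K$ while $f$ is irreducible, then $\widetilde{K(\alpha)} = \tilde K$ and $v(K(\alpha)^\times) = \ZZ$ impose no constraint on $K(e_i)$, so you cannot conclude anything about the denominator of $r$ from the inclusions alone. You need a separate argument that $r \in \ZZ$ in this situation. One way: translate so $\alpha = 0$; then $3r = v(f(0)) \in \ZZ$ forces $r \in \tfrac{1}{3}\ZZ$, and if $r \notin \ZZ$ the first approximant $V_1 = [v, V_1(z) = r]$ has relative ramification $\tau = 3$, so the residual polynomial of $f$ at $V_1$ is \emph{linear} in the residue variable --- which (by the branch-multiplicity formula, or by a direct discriminant estimate using $v(3) > 0$) contradicts the roots being equispaced at radius $r$. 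This is exactly the content of the paper's Case~3, where it is immediate because the unramified hypothesis gives $r \in \ZZ$ directly. Your Cases B and C (where $[K(\alpha):K] = 3$) do work via the inclusions as you outline, and the Kummer step is then genuinely tame of degree dividing~$4$; but the overall claim that the Kummer step is ``coprime to~$3$'' only holds once the missing Case-A argument is supplied.
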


The next lemma gives a criterion for determining reduction type of an elliptic
curve in terms of the images of the 2-torsion in the residue field.

\begin{lemma}
  \label{lem:roots_reduction}
  Let $E_{/K}$ be an elliptic curve given by a Weierstrass equation of the form
  \[
  E \colon y^2 = (x-\alpha)(x-\beta)(x-\gamma),
  \]
  where $\alpha,\beta,\gamma \in K_\alg$. Suppose that $\min\left\{v(\alpha), v(\beta), v(\gamma)\right\} \ge 0$. 
  \begin{enumerate}
  \item If the reductions $\tilde \alpha$, $\tilde \beta$, and $\tilde \gamma$
    are pairwise distinct, then $E$ has good reduction.
  \item If exactly two of the reductions $\tilde \alpha$, $\tilde \beta$, and
    $\tilde \gamma$ coincide, then $E$ has multiplicative reduction.
  \item If all three of the reductions $\tilde \alpha$, $\tilde \beta$, and
    $\tilde \gamma$ coincide, then $E$ has additive reduction.
  \end{enumerate}
  In particular, this model of $E$ has semistable reduction if and only if $D(0,0)$ is the
  minimal disk containing $\alpha, \beta, \gamma$.
\end{lemma}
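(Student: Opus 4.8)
The plan is to reduce the given Weierstrass equation modulo $\mm_K$ and read off the reduction type of the model from the singularities of the resulting plane cubic over $\tilde K$. First I would check that the equation is legitimate and integral: since $\mathrm{char}(\tilde K)\ne 2$ we have $2\in\cO_K^\times$, and the coefficients of $y^2 = (x-\alpha)(x-\beta)(x-\gamma)$ are, up to sign, the elementary symmetric functions of $\alpha,\beta,\gamma$; these lie in $K$ because $E$ is defined over $K$, and they have non-negative valuation because the roots are integral by hypothesis, so they lie in $\cO_K$. Because reduction is a ring homomorphism on integral elements of $\CK$, reducing the coefficients produces exactly $\tilde E\colon y^2 = (x-\tilde\alpha)(x-\tilde\beta)(x-\tilde\gamma)$. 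I would interpret items (1)--(3) as assertions about whether $\tilde E$ is smooth, nodal, or cuspidal --- that is, about the reduction of this particular model; this is the right reading, since in case~(3) the given model need not be minimal and the abstract curve may acquire better reduction after a further coordinate change (e.g.\ $y^2 = x^3 - p^4 x$ over $\QQ_p$, $p$ odd, has all three roots congruent to $0$ but good reduction).

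The singularity analysis is then elementary. The point at infinity of $\tilde E$ is always smooth, while a finite point is singular exactly when $2y = 0$ and $g'(x) = 0$, where $g$ denotes the reduced cubic; as $2$ is a unit this forces $y = 0$ and $x$ a repeated root of $g$ (equivalently $v(\Delta_E) > 0$; indeed $v(\Delta_E) = 2\sum_{i<j} v(\alpha_i-\alpha_j)$ from $\Delta_E = 16\prod_{i<j}(\alpha_i-\alpha_j)^2$, valid in residue characteristic $\ne 2$). Thus in case~(1) the cubic is separable, $\tilde E$ is smooth, and the model has good reduction. In cases~(2) and~(3), relabel so that $\tilde\beta = \tilde\gamma$ and translate the singular point to the origin via $x\mapsto x+\tilde\beta$, obtaining the local equation $y^2 = (\tilde\beta-\tilde\alpha)x^2 + x^3$. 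If exactly two reductions coincide then $\tilde\beta - \tilde\alpha\ne 0$, and since $\mathrm{char}(\tilde K)\ne 2$ it admits a square root $c$ with $c\ne -c$, so $y^2 - (\tilde\beta-\tilde\alpha)x^2 = (y-cx)(y+cx)$ splits into two distinct lines and the singularity is a node: multiplicative reduction. If all three coincide the quadratic term vanishes, leaving the cusp $y^2 = x^3$: additive reduction. This node-versus-cusp dichotomy, and the fact that it is precisely where the hypothesis $\mathrm{char}(\tilde K)\ne 2$ enters, is the only step that needs any care.

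For the final clause, semistable reduction of the model means good or multiplicative reduction, which by (1)--(3) happens precisely when $\tilde\alpha,\tilde\beta,\tilde\gamma$ are not all equal, i.e.\ when $v(\alpha_i-\alpha_j)=0$ for some $i<j$. On the other hand, since $\alpha,\beta,\gamma$ are distinct integral elements of $\CK$, the minimal disk containing them is $D(\alpha,r)$ with $r = \min_{i<j}v(\alpha_i-\alpha_j)\ge 0$, and $D(\alpha,0) = D(0,0)$ because $\alpha$ is integral; hence this minimal disk equals $D(0,0)$ if and only if $r = 0$. Chaining the equivalences: the model is semistable $\iff r = 0 \iff D(0,0)$ is the minimal disk containing $\alpha,\beta,\gamma$. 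This last part is bookkeeping once the reduced equation is in hand, so I expect no genuine obstacle beyond the node/cusp computation above.
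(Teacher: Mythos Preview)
Your proof is correct and follows essentially the same approach as the paper: reduce the Weierstrass equation modulo $\mm_K$ and classify the singularity of the resulting plane cubic. The only tactical difference is in case~(2): the paper invokes Hensel's Lemma to show the simple root is $K$-rational and translates by it before reducing, whereas you translate by the repeated root directly over the residue field and read off the tangent cone; both routes yield the same node/cusp dichotomy, and your treatment of the final disk clause is a bit more explicit than the paper's.
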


\begin{remark}
The different reduction types correspond to different group structures on the
special fiber of the N\'eron model $\cE$. (In the setting of the lemma, it
coincides with the locus of smooth points of $E$, viewed as a scheme over
$\cO_K$.) Assume the residue field of $K$ is algebraically closed for
simplicity. If $E$ has good reduction, then the 2-torsion remains separate
under reduction to yield the group
\[
\cE_s[2] \cong \ZZ/2\ZZ \times \ZZ/2\ZZ.
\]
If $E$ has multiplicative reduction, then two of the 2-torsion points collapse
to a singular point, and we have
\[
\cE[2] \cong \GG_m[2] \cong \ZZ/2\ZZ.
\]
If $E$ has additive reduction, then all of the nontrivial 2-torsion collapses to
a singular point, and we have
\[
\cE_s[2] \cong \GG_a[2] = 0.
\]
\end{remark}

\begin{proof}[Proof of Lemma~\ref{lem:roots_reduction}]
  The discriminant of $E$ is given by
  \[
  \Delta_E = 2^4(\alpha - \beta)^2(\alpha - \gamma)^2(\beta - \gamma)^2.
  \]
  If $\alpha$, $\beta$, $\gamma$ have distinct images in the residue field, then
  $\tilde \Delta_E = \Delta_{\tilde E} \neq 0$. Hence $E$ has good reduction.

  If any of $\tilde \alpha$, $\tilde \beta$, and $\tilde \gamma$ coincide, then
  the formula in the last paragraph shows that $\tilde \Delta_E =
  \Delta_{\tilde E} = 0$. So $E$ has bad reduction. Without loss of generality,
  we may assume that $\tilde \alpha = \tilde \beta$. If $\tilde \gamma = \tilde
  \alpha$, then the reduction of $E$ has the form $y^2 = (x-\tilde \alpha)^3$,
  which has a cuspidal singularity at $(\tilde \alpha, 0)$. That is, $E$ has
  additive reduction. If instead $\tilde \gamma \ne \tilde \alpha$, then $\tilde
  \gamma$ is a simple root of the reduction of the polynomial
  $(x-\alpha)(x-\beta)(x-\gamma) \in K[x]$. By Hensel's Lemma, $\gamma \in
  K$. We make the change of coordinates $x \mapsto x + \gamma$ to get an
  equation of the form
  \[
     E' \colon y^2 = x(x-\delta)(x-\varepsilon) = x^3 - (\delta +
     \varepsilon)x^2 + \delta \varepsilon x,
  \]
  where $\tilde \delta = \tilde \varepsilon$. The reduction of this equation is
  \[
     y^2 = (x - \tilde \delta)^3 + \tilde \delta (x - \tilde \delta)^2.
     \]
  Since $\tilde \delta \ne 0$, the singularity at $(\tilde \delta, 0)$ is
  nodal, and $E$ has multiplicative reduction.
\end{proof}

\begin{proposition}
  \label{prop:rational_two_torsion}
  Let $E_{/K}$ be an elliptic curve with potential multiplicative
  reduction. There is a $K$-rational $2$-torsion point $P$ such that in any
  model of $E$ over $\cO_{K_\alg}$ with multiplicative reduction, $P$ does not
  reduce to the singular point of the special fiber.
\end{proposition}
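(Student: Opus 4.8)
The strategy is to identify $P$ \emph{intrinsically}, as the unique nontrivial $2$-torsion point of $E$ whose reduction is good with respect to \emph{every} multiplicative-reduction model of $E$ over $\cO_{K_\alg}$, and then to deduce $P \in E(K)$ from a Galois-invariance argument. Such models exist: choosing a finite extension $L/K$ over which $E$ has multiplicative reduction and base-changing the minimal Weierstrass model of $E_L$ produces one over $\cO_{K_\alg}$. The crucial preliminary is a \emph{model-independence} statement: for any two Weierstrass models $\mathcal{E}, \mathcal{E}'$ of $E$ over $\cO_{K_\alg}$ with multiplicative reduction and any $Q \in E(K_\alg)$, the point $Q$ reduces to the singular point of $\mathcal{E}$ if and only if it reduces to the singular point of $\mathcal{E}'$. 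To see this, I would first note that a model with nodal special fiber must be minimal --- a strict scaling would force a cuspidal special fiber or a non-integral equation --- so $\mathcal{E}$ and $\mathcal{E}'$ are related by an admissible change of coordinates $[u;r,s,t]$ with $u$ a unit and $r,s,t$ integral (after descending to a common finite extension of $K$, this is the standard theory of minimal Weierstrass equations, \cite[VII.1]{Silverman_AEC_2009}). Such a transformation induces an isomorphism of special fibers that intertwines the two reduction maps and carries the node of one onto the node of the other, and the claim follows.

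Next I would pin down $P$. Since $\mathrm{char}(\tilde K) \ne 2$ forces $\mathrm{char}(K) \ne 2$, completing the square puts any multiplicative-reduction model in the form $y^2 = (x-\alpha)(x-\beta)(x-\gamma)$ with $\alpha,\beta,\gamma \in \cO_{K_\alg}$ (the roots of a monic integral cubic are integral, as $\cO_{K_\alg}$ is integrally closed); note that no restriction on $\mathrm{char}(K)$ beyond $\ne 2$ is needed here. By Lemma~\ref{lem:roots_reduction}, multiplicative reduction of this model forces exactly two of the reductions $\tilde\alpha, \tilde\beta, \tilde\gamma$ to coincide, say $\tilde\alpha = \tilde\beta \ne \tilde\gamma$; the singular point of the special fiber then has $x$-coordinate $\tilde\alpha$, so among the three nontrivial $2$-torsion points $(\alpha,0), (\beta,0), (\gamma,0)$ exactly one --- namely $(\gamma,0)$ --- reduces to a smooth point. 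Combined with model-independence, this produces a \emph{single} nontrivial $2$-torsion point $P \in E(K_\alg)$ that fails to reduce to the singular point of every multiplicative-reduction model of $E$ over $\cO_{K_\alg}$, while every other nontrivial $2$-torsion point reduces to the singular point of every such model.

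It remains to show $P \in E(K)$. Fix $\sigma \in G_K$ and a multiplicative-reduction model $\mathcal{E}$ of $E$ over $\cO_{K_\alg}$. The conjugate $\mathcal{E}^\sigma$ is again a multiplicative-reduction model, because reduction type over $\cO_{K_\alg}$ is a geometric invariant; moreover $Q$ reduces to the singular point of $\mathcal{E}$ exactly when $\sigma(Q)$ reduces to the singular point of $\mathcal{E}^\sigma$, which by model-independence is equivalent to $\sigma(Q)$ reducing to the singular point of $\mathcal{E}$ itself. Hence the defining property of $P$ is invariant under $\sigma$, and the uniqueness just established forces $\sigma(P) = P$; as $\sigma$ was arbitrary, $P \in E(K)$. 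I expect the only real friction to lie in the model-independence step, namely in reducing the comparison of two models over the (non-Noetherian) ring $\cO_{K_\alg}$ to the classical integral change-of-coordinates statement over a finite extension of $K$; the uniqueness and the Galois descent are then formal. As a sanity check, one can reach $P$ by a second route: over $\CK$ the curve becomes a Tate curve $E_q$ with $v(q) > 0$ and $E_q(\CK) \cong \CK^\times/q^{\ZZ}$, and $P$ is the image of $-1 \in \mu_2$ under this uniformization; since $v(-1) = 0$ and $\mathrm{char}(\tilde K) \ne 2$, this point reduces to $-1 \in \GG_m$, a smooth point, while it is $K$-rational because the relevant Tate parameter $q$ lies in $K$ and quadratic twisting leaves $E[2]$ unchanged as a $G_K$-module.
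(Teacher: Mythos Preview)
Your argument is correct and arrives at the same point $P$, but by a different route than the paper. The paper works directly with the valuations of root differences: writing $y^2 = (x-\alpha)(x-\beta)(x-\gamma)$, it constructs one multiplicative model and reads off from Lemma~\ref{lem:roots_reduction} that, after relabeling, $v(\beta-\gamma) > v(\alpha-\beta) = v(\alpha-\gamma)$. Rationality of $\alpha$ is then immediate from the fact that $G_K$ permutes the three pairwise differences isometrically, hence must fix the root that is ``farther'' from the other two; model-independence is invoked only in the final sentence, to transport the non-singular-reduction property to all other multiplicative models. Your version inverts the emphasis: you isolate model-independence as a standalone preliminary, use it to \emph{define} $P$ intrinsically as the unique $2$-torsion point with smooth reduction in every multiplicative model, and then deduce $K$-rationality by conjugating models rather than by comparing valuations. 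The two characterizations of $P$ are equivalent---the valuation inequality is exactly the statement that $\tilde\beta = \tilde\gamma \ne \tilde\alpha$ in a suitably scaled model---so the substance is the same. Your approach gives a cleaner separation of concerns and a nice Tate-uniformization cross-check; the paper's approach is slightly more elementary in that the Galois step uses only that $G_K$ preserves $v$, and it avoids the descent from $\cO_{K_\alg}$ to a finite extension needed to invoke the classical minimal-model comparison.
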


\begin{proof}
  Since $K$ does not have characteristic~2, we may begin by choosing a model
  for $E$ of the form $y^2 = f(x)$ with $f \in K[x]$ a cubic polynomial. Let
  $\alpha, \beta, \gamma \in K_\alg$ be the roots of $f$.  We claim that there
  exists a relabeling of the roots of $f$ such that
  \[
    v(\beta-\gamma) > v(\alpha - \beta) = v(\alpha - \gamma).
  \]
  That is, $\beta$ and $\gamma$ are closer to each other than to $\alpha$. To
  see it, start with any labeling of the roots. Let $u \in K_\alg$ be such that
  $v(u) = \min\Big( v(\alpha-\beta), v(\alpha - \gamma)\Big)$. Define
  \[
     g(x) = u^{-3} f(ux + \alpha) = x\left(x+\frac{\alpha-\beta}{u} \right) \left(x + \frac{\alpha - \gamma}{u}\right).
     \]
  The curve $E' \colon y^2 = g(x)$ is isomorphic to $E$ and satisfies the
  hypotheses of Lemma~\ref{lem:roots_reduction}. As $E$ has potential
  multiplicative reduction and $\tilde g$ has at least two distinct roots, it
  must have exactly two distinct roots. If $\frac{\alpha - \beta}{u}$ and
  $\frac{\alpha - \gamma}{u}$ both have valuation zero, then we already have
  the correct labeling because $v(\alpha - \beta) = v(\alpha - \gamma) = v(u)$
  and
  \[
  v(\beta - \gamma) = v\left( \frac{\alpha - \beta}{u} - \frac{\alpha-\gamma}{u}\right) + v(u) > v(u).
  \]
  Otherwise, exactly one of
  $v\Big(\frac{\alpha-\beta}{u}\Big)$ or
  $v\Big(\frac{\alpha-\gamma}{u}\Big)$ has positive valuation, say the
  former. So $v(\alpha - \gamma) = v(u)$, $v(\alpha - \beta) > v(u)$, and
  \[
  v(\beta - \gamma) = v\left( \frac{\alpha - \beta}{u} - \frac{\alpha-\gamma}{u}\right) + v(u) = v(\alpha - \gamma).
  \]
  Swapping $\alpha$ and $\gamma$ finishes the proof of the claim.

  Next we claim that $f(x)$ has a $K$-rational root. Let $L$ be a splitting
  field for $f$. Every element of $G_K$ preserves the valuations $v(\alpha -
  \beta)$, $v(\alpha - \gamma)$, and $v(\beta - \gamma)$. But the third of
  these is strictly larger than the first two, so any element of $G_K$ fixes or
  swaps $\beta$ and $\gamma$. That is, every element of $G_K$ fixes
  $\alpha$. Since $\alpha$ is a root of the separable polynomial $f$, we
  conclude $\alpha$ is $K$-rational.

  Let $P = (\alpha,0) \in E(K)$ be our distinguished 2-torsion point. On the
  model $E'$ defined earlier in the proof, the point $P$ corresponds to
  $(0,0)$. By construction, $E'$ has multiplicative reduction, and the double
  root of $\tilde g$ corresponds to the singular point of the reduced curve
  $\tilde E'$. Consequently, $(0,0)$ does not reduce to the singular
  point. This property persists under any invertible change of coordinates over
  $\cO_{K_\alg}$.
\end{proof}

\begin{proof}[Proof of Theorem~\ref{Thm:char-not-2}]
  Choose a model for $E$ of the form $y^2 = f(x)$ with $f \in K[x]$ a monic
  cubic polynomial. Write $\pi$ for a uniformizer of $K$. We consider four
  cases: the first three prove the general case of semistability, while the
  final case addresses the setting where $E$ has potential multiplicative
  reduction.

  \noindent \textbf{Case 1:} $f$ admits a $K$-rational root. After an
  appropriate translation, we obtain a new model
  \[
  E'_{/K} \colon y^2 = xg(x),
  \]
  where $g$ is monic and quadratic.  Let $m/n$ be the minimum valuation of a
  root of $g$, where $m \in \ZZ$ and $n \in \{1,2\}$. Set $L = K(\pi^{1/2n})$,
  and make the $L$-rational change of coordinates $(x,y) \mapsto (\pi^{m/n} x,
  \pi^{3m/2n}y)$ in order to obtain a third model
  \[
     E''_{/L} \colon y^2 = xh(x).
  \]
  Now $h$ is monic with integral coefficients, and at least one of the roots of
  $h$ has valuation~0. Then $E''$ has semistable reduction
  (Lem.~\ref{lem:roots_reduction}), and $L$ is totally ramified with degree
  dividing~4.

  \noindent \textbf{Case 2:} $f$ is irreducible over $K$ and the extension
  $K[x]/(f)$ is weakly totally ramified. Let $\alpha$ be a root of $f$ in this
  extension. Replacing $K$ with the cubic extension $K(\alpha)$, we find
  ourselves in Case~1. That is, $E$ attains semistable reduction after a weakly
  totally ramified extension of degree dividing~12.

  \noindent \textbf{Case 3:} $f$ is irreducible over $K$ and the extension
  $K[x]/(f)$ is unramified. Write $B$ for the minimal disk in $\CK$ containing
  the roots of $f$; it contains a $K$-rational point
  (Lem.~\ref{lem:unramified}). Without loss of generality, we may translate by
  this point in order to assume that $B$ contains~0. Irreducibility of $f$
  implies that all of its roots have the same valuation~$r$, and the unramified
  hypothesis shows that $r \in \ZZ$. Set $L = K$ if $r$ is even and $L =
  K(\pi^{1/2})$ if $r$ is odd. Rescaling by the transformation $(x,y) \mapsto
  (\pi^r x, \pi^{3r/2} y)$ gives a new model
  \[
      E'_{/L} \colon y^2 = g(x),
  \]
  where $g(x) \in \cO_L$ is monic, and all of its roots have valuation~0. The
  minimal disk about the roots of $g$ contains~0, and hence there exist roots
  $\alpha, \beta \in K_{\alg}$ such that $v(\alpha - \beta) = 0$. We conclude
  that $E'$ has semistable reduction (Lem.~\ref{lem:roots_reduction}), and 
  that $L/K$ is totally ramified of degree dividing $2$.

  \noindent \textbf{Case 4:} $E$ has potential multiplicative reduction. Let
  $P$ be a $K$-rational 2-torsion point as in
  Lemma~\ref{prop:rational_two_torsion}. Without loss of generality, we may
  translate by the $x$-coordinate of this point in order to assume that our
  model has the form
  \[
     E \colon y^2 = xg(x),
  \]
  where $g$ is a monic quadratic polynomial. Let $r$ be the minimum valuation
  of a root of $g$. We claim that $r \in \ZZ$. Assuming the claim for the
  moment, we define $L = K(\pi^{1/2})$ if $r$ is odd and $L = K$ if $r$ is
  even. Then $L/ K$ is separable and weakly totally ramified of degree~1
  or~2. Make the change of coordinates $(x,y) \mapsto (\pi^r x, \pi^{3r/2} y)$
  to obtain an equation
  \[
   E'_{/L} \colon y^2 = xh(x),
  \]
  where $h$ is monic, and at least one of its roots has valuation~0. In fact,
  both roots have valuation~0, for otherwise $\tilde E'$ has a singularity at
  $(0,0)$, which contradicts our choice of $P$. We conclude that $E'$ has
  multiplicative reduction (Lem.~\ref{lem:roots_reduction}).

  Now we prove the claim that $r \in \ZZ$. Suppose not. Write $g(x) = x^2 + bx
  + c$ with $b,c \in K$. The Newton polygon for $g$ must have a single segment
  with half-integral slope. It follows that $v(c)$ is odd, and $v(b) \ge
  \frac{1}{2} v(c)$. If $\gamma$ is a root of $g$, then $v(\gamma) =
  \frac{1}{2} v(c)$, and we can rescale the original model $E$ to get a new
  model of the form
  \[
    E'' \colon y^2 = x p(x), 
  \]
  where $p(x) = \gamma^{-2} g(\gamma x)$. This model has semistable reduction
  since $xp(x)$ has at least two distinct roots over the residue field. But we
  know that $E''$ has multiplicative reduction, so we conclude that $p$ has a
  double root over the residue field. Hence, the valuation of its discriminant
  is positive:
  \[
     v\Big((b^2 - 4c) \gamma^{-2}\Big) = v(b^2 - 4c) - 2v(\gamma) = v(b^2 - 4c)
     - v(c) = v(b^2/c - 4) > 0.
     \]
  By the ultrametric inequality, we find that $v(b) = \frac{1}{2}v(c)$. But
  this is absurd since $v(c)$ is odd and $v(b) \in \ZZ$. The proof is complete.
\end{proof}


\subsection{Residue characteristic~2}
\label{sec:char2}

Throughout, assume that $K$ has residue characteristic~2. Our goal is to prove
the following result:

\begin{theorem}
\label{thm:char-2}
Let $E_{/K}$ be an elliptic curve. There is a separable weakly totally ramified
extension $L/K$ of degree dividing~24 such that $E$ has a model over $L$ with
semistable reduction. If $E$ has potential multiplicative reduction, then we
may take the degree of $L/K$ to divide~2.
\end{theorem}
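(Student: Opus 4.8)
The plan follows the template of the residue characteristic~$\neq 2$ cases: split into the potential multiplicative and potential good reduction regimes. The potential multiplicative case is the easy one, handled by the theory of the Tate curve; the potential good reduction case is where Ramified Approximation --- in the guise of Lemma~\ref{lem:equispaced} --- does the essential work, and it also explains the factor of~$2$ separating the bound~$24$ here from the bound~$12$ in residue characteristic~$3$.

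For potential multiplicative reduction, note that $v(j_E)<0$ forces $j_E\notin\{0,1728\}$ (both of which have nonnegative valuation in residue characteristic~$2$), so $E$ is a quadratic twist of the Tate curve $E_q$ with $j(E_q)=j_E$, and $E_q$ has split multiplicative reduction. If $E$ already has multiplicative (split or nonsplit) reduction over $K$, take $L=K$. Otherwise $E$ has additive reduction, and the minimal extension over which $E$ becomes isomorphic to $E_q$ is a genuine quadratic extension $L/K$: it is $K(\sqrt d)$ for the twisting class $d\in K^\times/(K^\times)^2$ when $\mathrm{char}\,K\neq 2$, and the Artin--Schreier extension $K(\wp^{-1}(d))$ for $d\in K/\wp(K)$ when $\mathrm{char}\,K=2$. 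In either case $L/K$ is separable. It must be ramified, since twisting by an unramified quadratic character only interchanges split and nonsplit multiplicative reduction and cannot produce additive reduction. A ramified quadratic extension in residue characteristic~$2$ has ramification index~$2$ and trivial residue extension, hence is totally ramified; and over it $E\cong E_q$ is semistable.

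For potential good reduction, I would work with the $3$-division polynomial, since the criterion of N\'eron--Ogg--Shafarevich with $\ell=3\neq 2$ says $E$ acquires good reduction over any extension on which inertia acts trivially on $E[3]$. Fix a Weierstrass model over $K$ and let $f\in K[z]$ be its $3$-division polynomial, a separable quartic whose roots are the $x$-coordinates of the nontrivial $3$-torsion points. Because $E$ has good reduction over $\CK$ and reduction is injective on $3$-torsion, in a good $\CK$-model these four $x$-coordinates have pairwise differences of valuation~$0$; transporting this through the $x$-affine change of coordinates back to our model shows the roots of $f$ are \emph{equispaced}, with common difference-valuation $v(\Delta)/6$ for the discriminant $\Delta$. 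I would then apply Lemma~\ref{lem:equispaced} to $f$: either $f$ has a weakly totally ramified root, or there is a separable, weakly totally ramified $K(\alpha)/K$ of degree dividing~$2$ with $\alpha$ in the minimal disk about the roots of $f$ and every root difference lying in $v(K(\alpha)^\times)$. In both cases we land over a separable weakly totally ramified extension on which we have rationally pinned down the center $\alpha$ of the $3$-torsion $x$-coordinates and know that their common difference-valuation lies in the value group. Over that extension, translate by $\alpha$ and rescale by a Weierstrass change $x\mapsto u^2x$, $y\mapsto u^3y$ with $v(u)=v(\Delta)/12$ --- a further totally ramified extension of ramification index dividing~$12$ --- so that the model becomes integral and the $3$-torsion $x$-coordinates become unit-separated, with pairwise distinct reductions. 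Then $I_L$ fixes all four $x$-coordinates, so it acts on $E[3]$ through $\{\pm I\}$ (the only automorphisms of $E[3]$ fixing every $x$-coordinate); since $\mu_3$ lies in the maximal unramified subextension in residue characteristic~$2$, the determinant of $\rho_{E,3}$ is unramified, consistent with this image lying in $SL_2(\FF_3)$. If the image is trivial, N\'eron--Ogg--Shafarevich gives good reduction; otherwise it is exactly $\{\pm I\}$, and the kernel of the corresponding quadratic character of $I_L$ cuts out a ramified --- hence totally ramified --- quadratic extension over which $E$ attains good reduction. A tower of weakly totally ramified extensions is weakly totally ramified (separable residue degrees being multiplicative), so the composite $L/K$ has the required type, and we keep every step separable by perturbing the adjoined uniformizer roots as in Proposition~\ref{prop:separable_dense}.

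The main obstacle is the degree bookkeeping. Conceptually $[L:K]$ should be the order of $\rho_{E,3}(I_K)\subseteq SL_2(\FF_3)$, a divisor of~$24$; but naively multiplying the degrees of the steps above --- up to~$2$ from Lemma~\ref{lem:equispaced}, up to~$12$ from the rescaling, up to~$2$ from the final twist --- overshoots~$24$. Forcing the bound to a divisor of~$24$ requires a careful case analysis, keyed to the factorization type of $f$ and the class of $v(\Delta)$ modulo~$12$ (paralleling the bookkeeping of \S\ref{sec:charnot23} and \S\ref{sec:charnot2}), showing that these steps cannot all be nontrivial at full strength simultaneously: when Lemma~\ref{lem:equispaced} already forces a ramified quadratic, the rescaling needs correspondingly less ramification, and the residual $\{\pm I\}$-action is typically already trivial once the $3$-torsion $x$-coordinates separate. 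A secondary point needing care is that the discriminant/$2$-torsion criterion of Lemma~\ref{lem:roots_reduction} is unavailable in residue characteristic~$2$, so one must verify directly, through the inertia action on $E[3]$ and the N\'eron--Ogg--Shafarevich criterion, that the final model has genuine good reduction rather than merely potential good reduction.
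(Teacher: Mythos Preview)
Your potential-multiplicative argument via the Tate curve is sound and reaches the right bound, though the paper takes a different route through Proposition~\ref{prop:rational_three_torsion} and an explicit flex-point normalization. In the potential good reduction case, however, there is a genuine gap beyond the degree bookkeeping you already flag. Your claim that translating by $\alpha$ and rescaling by $u$ makes ``the model integral'' is false: those moves make $\psi_3$ integral with unit-separated roots, but the $b_i$ do not determine the $a_i$, so the Weierstrass coefficients need not become integral (in characteristic~$0$, for instance, $a_2' = b_2/4$ and $a_6' = b_6/4$ after completing the square). Your inertia computation survives this --- it only needs the \emph{roots} of $\psi_3$ to be integral with distinct reductions --- but then the whole argument rests on N\'eron--Ogg--Shafarevich, which is stated for perfect residue fields, whereas the paper works over an arbitrary complete discretely valued field. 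You would have to justify that extension, and you would still be left with no mechanism to force the degree down to a divisor of~$24$.

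The paper avoids both problems by being fully constructive. It proves Lemma~\ref{lem:roots_reduction2}, a $3$-torsion analogue of Lemma~\ref{lem:roots_reduction}: an integral Weierstrass model has good reduction exactly when $\tilde\psi_3$ has four distinct roots. This replaces N\'eron--Ogg--Shafarevich outright. The degree bound then comes from a case split on $\psi_3$: a rational $3$-torsion point gives degree dividing~$3$; a rational root of $\psi_3$ gives degree dividing~$6$; a root over a weakly totally ramified extension of degree $\le 4$ gives degree dividing~$24$. In the remaining case (your setup, the paper's Case~5), after Lemma~\ref{lem:equispaced} centers and scales $\psi_3$ over an extension of degree dividing~$4$, the paper applies Ramified Approximation a \emph{second} time --- not to $\psi_3$, but to a quadratic in the shear parameter ($z^2 - a_2'$ in characteristic~$0$, or $z^2 + a_1' z + a_2'$ in characteristic~$2$) --- to choose a shear and $y$-translation over an at-most-quadratic weakly totally ramified extension making every $a_i$ integral. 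This second invocation of Ramified Approximation is the idea your sketch is missing, and it is what closes the degree count at~$8$ in this case without any appeal to the Galois representation.
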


An elliptic curve in this setting is given by the general Weierstrass model
\[
E_{/K} \colon y^2 + a_1 xy + a_3 y = x^3 + a_2 x^2 + a_4 x + a_6,
\]

where $a_1, a_2, a_3, a_4, a_6 \in K$. We define the standard quantities
  \begin{align*}
    b_2 &= a_1^2 + 4a_2\\
    b_4 &= 2a_4 + a_1 a_3\\
    b_6 &= a_3^2 + 4a_6\\
    b_8 &= a_1^2 a_6 + 4a_2 a_6 - a_1a_3a_4 + a_2a_3^2 - a_4^2\\
    \Delta &= -b_2^2 b_8 - 8b_4^3 - 27 b_6^2 + 9 b_2 b_4 b_6\\
    c_4 &= b_2^2 - 24b_4.
\end{align*}
Recall that if all $a_i \in \cO_K$, then $E$ has good reduction if $v(\Delta) =
0$, and $E$ has multiplicative reduction if $v(\Delta) > 0$ and $v(c_4) = 0$
\cite[Prop.~VII.5.1]{Silverman_AEC_2009}.
  
The 2-torsion of an elliptic curve over $K$ is ill-behaved under the reduction
map, so we will want to focus our attention on 3-torsion points. The 3-division
polynomial for $E$ is given by:
\begin{equation}
  \label{eq:psi3}
  \psi_3(x) = 3x^4 + b_2 x^3 + 3b_4 x^2 + 3b_6 x + b_8.
\end{equation}
The roots of $\psi_3$ are precisely the $x$-coordinates of the nontrivial
3-torsion points for $E$. We can now state an analogue of
Lemma~\ref{lem:roots_reduction} for 3-torsion.

\begin{lemma}
  \label{lem:roots_reduction2}
  Let $E_{/K}$ be an elliptic curve given by a Weierstrass model with
  coefficients in $\cO_K$. Write $\psi_3 \in \cO_K[x]$ for the 3-division
  polynomial, and write $\tilde \psi_3$ for its image in $\tilde K[x]$.
  \begin{enumerate}
  \item If $\tilde \psi_3$ has four distinct roots in an algebraic closure,
    then $E$ has good reduction.
  \item If $\tilde \psi_3$ has one simple root and one triple root in an
    algebraic closure, then $E$ has multiplicative reduction.
  \item If $\tilde \psi_3$ has a quadruple root in an algebraic closure, then
    $E$ has additive reduction.
  \end{enumerate}
  One of these three cases must occur.  In particular, this model of $E$ has
  semistable reduction if and only if the minimal disk containing its roots is $D(0,0)$.
\end{lemma}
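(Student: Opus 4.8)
The plan is to mirror the structure of the proof of Lemma~\ref{lem:roots_reduction}, but working with the $3$-division polynomial $\psi_3$ in place of the $2$-torsion cubic. The key input is that, for a Weierstrass model with integral coefficients, the reduction type is detected by the pair $(v(\Delta), v(c_4))$: good reduction when $v(\Delta) = 0$; multiplicative when $v(\Delta) > 0$ and $v(c_4) = 0$; additive when $v(\Delta) > 0$ and $v(c_4) > 0$. So the first step is to translate each of the three root-configuration hypotheses on $\tilde\psi_3$ into a statement about $v(\Delta)$ and $v(c_4)$. For this I would relate the discriminant of $\psi_3$ to $\Delta$ and $c_4$: a standard computation (which I would not grind through) gives $\mathrm{disc}(\psi_3) = 3^{?}\,\Delta^{?}\,c_4^{?}$ up to a unit, so that $\tilde\psi_3$ has a repeated root exactly when $v(\Delta) > 0$ or $v(c_4) > 0$ — but since $p = 2$, one must be careful that the constant $3$ is a unit, so it does not interfere.

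Concretely, I would argue as follows. \textbf{Case 1:} if $\tilde\psi_3$ has four distinct roots, then $\mathrm{disc}(\tilde\psi_3) \ne 0$, so $v(\Delta) = 0$ (after checking the exponent of $c_4$ in the discriminant formula does not allow $v(c_4) > 0$ with $v(\Delta) = 0$ — or more simply, use that $\psi_3$ separates $3$-torsion and good reduction is equivalent to $\tilde\psi_3$ separable, citing the injectivity of reduction on prime-to-$p$ torsion for a model with good reduction). Hence $E$ has good reduction. For the converse direction inside Case~1, good reduction forces the $3$-torsion points to have pairwise distinct reductions since $3$ is prime to $2$, so $\tilde\psi_3$ has four distinct roots. \textbf{Case 3:} if $\tilde\psi_3$ has a quadruple root $\tilde x_0$, then after translating $x \mapsto x + x_0$ (lifting $\tilde x_0$ to $\cO_K$, which is harmless) we have $\tilde\psi_3 = 3x^4$, forcing $v(b_2), v(b_4), v(b_6), v(b_8) > 0$; this gives $v(c_4) = v(b_2^2 - 24 b_4) > 0$ and $v(\Delta) > 0$, so $E$ has additive reduction. \textbf{Case 2} is what remains: one simple root and one triple root. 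Here the simple root $\tilde x_0$ lifts by Hensel's lemma to a $K$-rational root $x_0$ of $\psi_3$, i.e.\ a $K$-rational $3$-torsion $x$-coordinate; translating it to the origin, $\tilde\psi_3 = 3x^3(x - \tilde x_1)$ with $\tilde x_1 \ne 0$, and one checks that $v(c_4) = 0$ (the triple-root-at-a-nonzero-point structure of $\tilde\psi_3$ pins down $\tilde b_2 \ne 0$, hence $\tilde c_4 = \tilde b_2^2 \ne 0$) while $v(\Delta) > 0$; so $E$ has multiplicative reduction.

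The final sentence (semistability iff the minimal disk about the roots of $\psi_3$ is $D(0,0)$) then follows exactly as in Lemma~\ref{lem:roots_reduction}: semistable means good or multiplicative, i.e.\ Case~1 or Case~2; Case~1 says all four roots of $\tilde\psi_3$ are distinct units or — after possibly no translation — that the roots of $\psi_3$ lie in distinct residue classes, i.e.\ the minimal disk is $D(0,0)$ up to translation, and since we assume the model already has integral $\psi_3$, the minimal disk being $D(0,0)$ is precisely the statement that not all roots are congruent mod $\mm$, which rules out the quadruple-root case; and in Case~2 the one simple root being a unit distance from the triple-root cluster is again exactly minimal disk $= D(0,0)$. \textbf{The main obstacle} I anticipate is bookkeeping in residue characteristic $2$: the classical formulas relating $\mathrm{disc}(\psi_3)$, $\Delta$, and $c_4$ carry factors of $2$ and $3$, and I must verify that the $2$-power factors do not spoil the equivalences while the (unit) factor $3$ is harmless — equivalently, that the reduction-type trichotomy via $(v(\Delta), v(c_4))$ lines up cleanly with the root-pattern trichotomy of $\tilde\psi_3$ even though $\psi_3$ is degree $4$ and we are in the "bad" characteristic. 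A secondary subtlety is confirming that the three listed root patterns are the only ones that can occur for a $\tilde\psi_3$ arising from an elliptic curve — e.g.\ ruling out "two distinct double roots" or "one double and two simple roots" — which should follow from the same discriminant/$c_4$ analysis since any repeated root forces $v(\Delta) > 0$, and then the structure of $\psi_3$ modulo $\mm$ (a quartic whose Galois-theoretic shape is constrained by the group law) eliminates the intermediate patterns.
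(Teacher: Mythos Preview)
Your strategy is computational: translate root patterns of $\tilde\psi_3$ into conditions on $v(\Delta)$ and $v(c_4)$ via explicit formulas for $\mathrm{disc}(\psi_3)$ and the $b_i$. The paper takes a more conceptual route. It counts $3$-torsion on the smooth locus: writing $G = \tilde E_{\mathrm{ns}}[3]$, one has $|G| = 9, 3, 1$ according as the reduction is good, multiplicative, or additive, and each root of $\tilde\psi_3$ that is \emph{not} the singular $x$-coordinate contributes two nontrivial elements to $G$. The key step --- which simultaneously rules out the intermediate root patterns you worry about --- is the identity $\psi_3' = 3\psi_2$: any multiple root of $\tilde\psi_3$ is also a root of the $2$-division polynomial $\tilde\psi_2$, so if it corresponded to a nonsingular point that point would be both $2$- and $3$-torsion. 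Hence every multiple root of $\tilde\psi_3$ is the $x$-coordinate of the singular point of $\tilde E$, and since there is at most one singularity, there is at most one multiple root. This immediately excludes ``two double roots'' and ``one double plus two simples'', and the three cases of the lemma then drop out by counting $|G|$.

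Your approach can be made to work, but the gaps you flag are real. For Case~1 you need $\mathrm{disc}(\psi_3)$ to be a unit times a power of $\Delta$; this follows from $\psi_3' = 3\psi_2$ together with the classical fact that $\mathrm{Res}(\psi_2,\psi_3)$ is $\pm\Delta^2$, and the extra constants are powers of $3$, harmless in residue characteristic~$2$. For excluding the intermediate patterns, the Weierstrass relation $4b_8 = b_2 b_6 - b_4^2$ (which in characteristic~$2$ becomes $\tilde b_4^{\,2} = \tilde b_2 \tilde b_6$) does the job after translating a multiple root to~$0$, but you do not identify this constraint. The paper's $\psi_3' = 3\psi_2$ observation handles both issues at once and avoids all explicit discriminant and $c_4$ bookkeeping.
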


\begin{proof}
  The statement is geometric, so we may assume the residue field $\tilde K$ is
  algebraically closed.  Let $\tilde E_{\ns}$ be the nonsingular locus of the
  reduction of $E$; it is a $\tilde K$-group scheme. Let us write $G = \tilde
  E_{\ns}[3]$ for the $3$-torsion subgroup. Observe that
  \[
  G \cong \begin{cases}
    \ZZ / 3\ZZ \times \ZZ / 3\ZZ & \text{if $E$ has good reduction} \\
    \ZZ / 3\ZZ & \text{if $E$ has multiplicative reduction} \\
    0 & \text{if $E$ has additive reduction}.
  \end{cases}
  \]
  
  If $\tilde \psi_3$ has four distinct roots, then at most one of them is the
  $x$-coordinate of a singular point. Each of the remaining roots gives rise to
  two nontrivial elements of $G$, so that $|G| \geq 6 + 1$. Hence $|G| = 9$, and $E$ has
  good reduction.

  Suppose now that $\tilde \psi_3$ has a multiple root, say $\tilde x_0$. We
  claim that $\tilde x_0$ is the $x$-coordinate of a singular point of $\tilde E$.
  One checks that $\frac{d\psi_3}{dx} = 3 \psi_2(x)$, where $\psi_2(x) = 4x^3 +
  b_2 x^2 + 2b_4 x + b_6$ is the $2$-division polynomial --- the polynomial
  whose roots are the $x$-coordinates of 2-torsion points. It follows that
  $\tilde x_0$ satisfies $\psi_2$ as well. If $P = (\tilde x_0, \tilde y_0)$
  were nonsingular on $\tilde E$, then $P$ would be both a 2-torsion point and
  a 3-torsion point, which is clearly impossible. In particular, since $\tilde
  E$ has at most one geometric singularity, we see that $\tilde x_0$ is the
  only multiple root of $\psi_3$.

  If $\tilde \psi_3$ has a simple root and a triple root, then the triple root
  corresponds to a singularity on $\tilde E$, while the simple root gives rise
  to pair of nontrivial 3-torsion points in $G$. Thus $|G| = 3$ and $E$ has
  multiplicative reduction.

  If $\tilde \psi_3$ has two simple roots and a double root, then the two simple
  roots give rise to four nontrivial 3-torsion points in $G$. This implies $|G|
  \geq 5$, so that $|G| = 9$ and $E$ has good reduction. But this is absurd
  since the double root of $\tilde \psi_3$ yields a singularity of the reduced
  curve $\tilde E$.

  Finally, suppose that $\tilde \psi_3$ has a quadruple root. Then $\tilde
  \psi_3$ has no root corresponding to a nonsingular point of $\tilde E$ and
  $|G| = 1$. Therefore, $E$ has additive reduction.
\end{proof}

Our next task is to determine how coordinate changes affect the 3-division
polynomial. The most general coordinate change on $\mathbb{A}^2$ that yields a
Weierstrass model for $E$ is
\[
(x,y) \mapsto (u^2 x + r, u^3 y + u^2 s x + t), \qquad \text{where } r,s,t \in K, u \in K^\times.
\]
Note that this transformation can be written as a composition of four basic
types of transformation:
\begin{itemize}
  \item \textbf{homothety}: $(x,y) \mapsto (u^2 x, u^3 y)$ for $u \in K^\times$;
  \item \textbf{$x$-translation}: $(x,y) \mapsto (x + r, y)$ for $r \in K$;
  \item \textbf{shear}: $(x,y) \mapsto (x, y + sx)$ for $s \in K$; and
  \item \textbf{$y$-translation}: $(x,y) \mapsto (x, y + t)$ for $t \in K$.
\end{itemize}

The following lemma is proved by direct computation, or by staring at the
formulas for how the $b_i$'s transform under a change of coordinates
\cite[p.45]{Silverman_AEC_2009}.

\begin{lemma}
  \label{lem:psi3_coord_change}
  Let $E_{/K}$ be an elliptic curve in Weierstrass form:
  \[
  y^2 + a_1 xy + a_3 y = x^3 + a_2x^2 + a_4x + a_6.
  \]
  The 3-division polynomial
  \[
  \psi_3(x) = 3x^4 + b_2 x^3 + 3b_4 x^2 + 3b_6 x + b_8
  \]
  is unaffected by shears and $y$-translations. Homotheties have the effect
  $\psi_3(x) \mapsto u^{-8}\psi_3(u^2 x)$, and $x$-translations have the effect
  $\psi_3(x) \mapsto \psi_3(x+r)$.
\end{lemma}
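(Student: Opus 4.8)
The plan is to verify the lemma the way it is advertised: track the auxiliary coefficients $b_2, b_4, b_6, b_8$ through each of the four basic coordinate changes listed above, using the standard transformation table \cite[p.~45]{Silverman_AEC_2009}, and then substitute into the formula \eqref{eq:psi3} for $\psi_3$. I would handle the transformations one at a time.

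First, for a shear $(x,y)\mapsto(x,y+sx)$ and a $y$-translation $(x,y)\mapsto(x,y+t)$: although these do change the individual $a_i$, inspection of the transformation table shows that each of $b_2, b_4, b_6, b_8$ is left invariant --- indeed this is a large part of why the $b_i$ were introduced --- so $\psi_3$ is literally unchanged. One can also see this conceptually: the roots of $\psi_3$ are precisely the $x$-coordinates of the nontrivial $3$-torsion points of $E$, a shear or $y$-translation is an isomorphism of $E$ fixing the point at infinity and hence a group automorphism, and it fixes every $x$-coordinate; since $\psi_3$ has degree $4$ and leading coefficient $3$, it is determined by its roots in $K_\alg$ (with multiplicity), which are unchanged.

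Next, for the homothety $(x,y)\mapsto(u^2x,u^3y)$ the transformation formulas give $b_i' = u^{-i}b_i$ for $i = 2,4,6,8$, so substituting into \eqref{eq:psi3} and multiplying through by $u^8$ yields
\[
u^8\,\psi_3'(x) = 3(u^2x)^4 + b_2(u^2x)^3 + 3b_4(u^2x)^2 + 3b_6(u^2x) + b_8 = \psi_3(u^2x),
\]
that is, $\psi_3'(x) = u^{-8}\psi_3(u^2x)$. For the $x$-translation $(x,y)\mapsto(x+r,y)$ one computes $b_2' = b_2 + 12r$, $b_4' = b_4 + rb_2 + 6r^2$, $b_6' = b_6 + 2rb_4 + r^2b_2 + 4r^3$, and (using the identity $4b_8 = b_2b_6 - b_4^2$) $b_8' = b_8 + 3rb_6 + 3r^2b_4 + r^3b_2 + 3r^4$; plugging these in and expanding $\psi_3(x+r)$ by the binomial theorem, the two polynomials agree coefficient by coefficient. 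Alternatively, substituting $x = x'+r$ sends a point with $x$-coordinate $x_0$ to one with $x'$-coordinate $x_0-r$, so the new $3$-torsion $x$-coordinates are the old ones shifted by $-r$, which forces $\psi_3'(x) = \psi_3(x+r)$.

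This is a pure bookkeeping lemma, so there is no real obstacle; the only place where the brute-force route is mildly tedious is the expression for $b_8'$ under an $x$-translation, which is exactly why the torsion-point description is worth keeping in reserve. I would also note in passing that the residue-characteristic-$2$ hypothesis in force throughout this section plays no role: the identities hold over an arbitrary base field, with the understanding that $\psi_3 = 3x^4 + \cdots$ has degree $4$ here precisely because $3$ is invertible when the residue characteristic is $2$.
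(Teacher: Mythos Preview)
Your proposal is correct and follows exactly the approach the paper indicates: the paper's proof consists of the single sentence that the lemma ``is proved by direct computation, or by staring at the formulas for how the $b_i$'s transform under a change of coordinates \cite[p.~45]{Silverman_AEC_2009},'' and you have carried out precisely that computation. Your supplementary conceptual argument via the $x$-coordinates of $3$-torsion points is a nice sanity check that the paper does not include.
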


\begin{proposition}
  \label{prop:rational_three_torsion}
  Let $E_{/K}$ be an elliptic curve given by a Weierstrass equation. Suppose
  that $E$ has potential multiplicative reduction.  There is a $3$-torsion
  point $P = (\alpha,\beta) \in E(K_\alg)$ such that $\alpha \in K$, and in any
  model of $E_{K_\alg}$ with multiplicative reduction, $P$ does not reduce
  to the singular point of the special fiber.
\end{proposition}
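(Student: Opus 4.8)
The plan is to run the argument of Proposition~\ref{prop:rational_two_torsion} with the cubic $f$ replaced by the $3$-division polynomial $\psi_3 \in K[x]$. Since the residue characteristic is~$2$, the field $K$ has characteristic $0$ or~$2$, so $3$ is a unit and $\psi_3$ is a genuine quartic; moreover $\psi_3$ is separable, since its roots are the four distinct $x$-coordinates of the eight nontrivial $3$-torsion points of $E$. Write $x_1, \ldots, x_4 \in K_\alg$ for these roots.

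The first step is to pin down the relative distances among $x_1, \ldots, x_4$. Potential multiplicative reduction furnishes a model $\mathcal E$ of $E_{\CK}$ over $\cO_{\CK}$ with multiplicative reduction, and by Lemma~\ref{lem:psi3_coord_change} its $3$-division polynomial is $\psi_3$ composed with an affine substitution of the variable (up to a scalar), so its roots are $y_i = (x_i - r)/u^2 \in \cO_{\CK}$ for suitable $r \in \CK$ and $u \in \CK^\times$. By Lemma~\ref{lem:roots_reduction2} the reduction of this polynomial has a simple root and a triple root, so after relabeling $\tilde y_1 = \tilde y_2 = \tilde y_3 \ne \tilde y_4$. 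Undoing the substitution, this says precisely that $\rho := v(x_1 - x_4) = v(x_2 - x_4) = v(x_3 - x_4)$ is strictly smaller than each of $v(x_1 - x_2)$, $v(x_1 - x_3)$, $v(x_2 - x_3)$; that is, $x_4$ is the unique root all of whose distances to the other three roots are equal. This configuration is intrinsic to the multiset $\{x_1, \ldots, x_4\}$, and $G_K$ acts on $\psi_3$ by valuation-preserving permutations of the roots, so every element of $G_K$ fixes $x_4$. Separability of $\psi_3$ then gives $\alpha := x_4 \in K$. Let $P = (\alpha, \beta) \in E(K_\alg)$ be either of the two $3$-torsion points with this $x$-coordinate.

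For the last assertion, let $\mathcal E'$ be any model of $E_{K_\alg}$ with multiplicative reduction. It is obtained from $E$ by a Weierstrass coordinate change, so by Lemma~\ref{lem:psi3_coord_change} the roots of its $3$-division polynomial are an affine image $y_i' = (x_i - r')/(u')^2$ of $\{x_i\}$; in particular $P$ has $x$-coordinate $y_4'$ on $\mathcal E'$, the $y_i'$ are integral, and $v(y_1' - y_2')$, $v(y_1' - y_3')$, $v(y_2' - y_3')$ all strictly exceed $v(y_i' - y_4')$. If $v(y_i' - y_4')$ were positive then all four roots would reduce to a common value, the reduced $3$-division polynomial of $\mathcal E'$ would have a quadruple root, and $\mathcal E'$ would have additive reduction (Lemma~\ref{lem:roots_reduction2}), contrary to hypothesis; hence $v(y_i' - y_4') = 0$. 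Therefore $\tilde y_4'$ is distinct from $\tilde y_1' = \tilde y_2' = \tilde y_3'$, which is the triple root of the reduced $3$-division polynomial and so, by the proof of Lemma~\ref{lem:roots_reduction2}, the $x$-coordinate of the singular point of $\tilde{\mathcal E}'$. If $P$ reduces to the point at infinity it lands on the smooth locus; otherwise it reduces to an affine point whose $x$-coordinate is $\tilde y_4'$, which is not the singular point. Either way $P$ avoids the singular point.

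The main obstacle is the last paragraph: one must know that the ``three-clustered-plus-one-isolated'' shape of the roots of $\psi_3$ is preserved under every admissible Weierstrass change of coordinates --- so that the distinguished root $x_4$ remains distinguished in \emph{every} multiplicative-reduction model --- and one must rule out the degenerate possibility that the isolated root merges into a quadruple root, which is exactly the point at which multiplicativity (rather than additivity) of $\mathcal E'$ is used.
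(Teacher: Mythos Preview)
Your proof is correct and follows essentially the same approach as the paper. The paper's own proof is a one-line reference back to Proposition~\ref{prop:rational_two_torsion}: ``Just as in the proof of Proposition~\ref{prop:rational_two_torsion}, one shows that $\psi_3$ has a $K$-rational root $\alpha$ that is farther from the other three roots than they are from each other.'' Your write-up is simply a careful unpacking of that sentence, using Lemma~\ref{lem:roots_reduction2} in place of Lemma~\ref{lem:roots_reduction} to read off the simple-plus-triple root configuration and Lemma~\ref{lem:psi3_coord_change} to transport it between models.
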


\begin{proof}
  Let $\psi_3$ be the $3$-division polynomial for $E$. Just as in the proof of
  Proposition~\ref{prop:rational_two_torsion}, one shows that $\psi_3$ has a
  $K$-rational root $\alpha$ that is farther from the other three roots than
  they are from each other. 
\end{proof}

\begin{proof}[Proof of Theorem~\ref{thm:char-2}]
  Assume that $E$ is given by a general Weierstrass equation over $K$:
  \[
     E \colon y^2 + a_1 xy + a_3 y = x^3 + a_2 x^2 + a_4 x + a_6.
     \]
  Let $\psi_3$ be the $3$-division polynomial for $E$, and let $\pi$ be a
  uniformizer for $K$.

  \noindent \textbf{Case 1:} $E$ has a $K$-rational $3$-torsion point. We will
  show that $E$ has multiplicative reduction over $K$, or else it has a model
  with good reduction over $K(\pi^{1/3})$.

  Writing $P = (x_0, y_0)$ for the $K$-rational 3-torsion point, we make the
  change of coordinates $(x,y) \mapsto (x + x_0, y + y_0)$ in order to assume
  that $P = (0,0)$, and consequently, $a_6 = 0$. Also, $a_3 \ne 0$, for
  otherwise there would only be one $3$-torsion point with
  $x$-coordinate~$0$. We now make the change of coordinates $(x,y) \mapsto (x,
  x + a_4/a_3 y)$ in order to adjust the tangent line at $(0,0)$ to be $y = 0$.
  Since $(0,0)$ is a 3-torsion point, the line $y = 0$ is a flex. Now we have
  the equation
  \[
     E'_{/K} \colon y^2 + a xy + b y = x^3,
  \]
  with $a, b \in K$. The discriminant of the curve $E'$ is
  \[
     \Delta' = b^3(a^3 - 27 b).
     \]
     
  If $v(a^3) \le v(b)$, we apply the rescaling $(x,y) \mapsto (a^2 x, a^3y)$ to
  obtain the equation
  \[
     E''_{/K} : y^2 + xy + (b/a^3) y = x^3.
   \]
  This model has integral coefficients and $c_4$-invariant $1 - 24b / a^3$,
  which has valuation~0. That is, $E''$ has multiplicative reduction over $K$.

  If $v(a^3) > v(b)$, we write $m = v(b)$ and apply the rescaling $(x,y)
  \mapsto (\pi^{2m/3} x, \pi^my)$ to obtain an integral equation with
  discriminant valuation~0. That is, we have a model with good reduction
  defined over the field $K(\pi^{1/3})$.

  \noindent \textbf{Case 2:} $E$ has potential multiplicative reduction. Let $P
  = (x_0,y_0)$ be a 3-torsion point with $x_0 \in K$ as in
  Proposition~\ref{prop:rational_three_torsion}. Replacing $K$ with $K(y_0)$,
  we are in Case~1 of the proof, and we conclude that $E$ has multiplicative
  reduction over $K(y_0)$. If $K(y_0) / K$ is weakly totally ramified, then set
  $L = K(y_0)$. Otherwise, $K(y_0)/K$ is unramified, and we set $L = K$. Since
  the reduction type does not change under unramified extension, it follows
  that $E$ has a model over $K$ with multiplicative reduction. In either case,
  the degree of $L/K$ divides~2.

  \noindent \textbf{Case 3:} $\psi_3$ has a $K$-rational root. We claim that
  $E$ attains semistable reduction over a separable weakly totally ramified
  extension of degree dividing~6. Let $P = (x_0,y_0)$ be a $3$-torsion point on
  $E$ with $x_0 \in K$. Set $K' = K(y_0)$. If $K' / K$ is weakly totally
  ramified, then we find ourselves in Case~1 with $K$ replaced by $K'$. If
  instead $K' / K$ is unramified, then the proof of Case~1 shows that $E$ has a
  model over $K'(\pi^{1/3})$ with semistable reduction. But the reduction type
  is unaffected by unramified extensions, so we also have semistable reduction
  over the field $K(\pi^{1/3})$.

  \noindent \textbf{Case 4:} $\psi_3$ has a root defined over a weakly totally
  ramified extension of $K$. Let $K' / K$ be a (separable) weakly totally
  ramified extension that contains a root of $\psi_3$. We may assume that the
  degree $[K' : K]$ divides~4. Replacing $K$ with $K'$, we find ourselves in
  Case~3, and hence $E$ attains semistable reduction over a separable weakly
  totally ramified extension of degree dividing~24.

  \noindent \textbf{Case 5:} $\psi_3$ has no root defined over a weakly totally
  ramified extension of $K$. In particular, $E$ has potential good reduction
  (Prop.~\ref{prop:rational_three_torsion}), so the roots of $\psi_3$ are
  equispaced (Prop.~\ref{lem:roots_reduction2}). Let $B$ be the minimal disk
  about the roots of $\psi_3$. There is $r \in B$ such that $K(r) / K$ is
  separable and weakly totally ramified of degree dividing~2, and such that
  $v(\alpha - \beta) \in v(K(r)^\times)$ for every pair of distinct roots
  $\alpha, \beta$ of $\psi_3$ (Lem.~\ref{lem:equispaced}). Let $m$ be this
  common value, so that $B = D(\alpha,m)$. Let $u \in K_\alg$ be such that $u^2
  \in K(r)$ and $v(u^2) = m$. Making the change of coordinates $(x,y) \mapsto
  (u^2x + r, u^3 y)$, we obtain a new model
  \[
    E'_{/K'} \colon y^2 + a_1' xy + a_3' y = x^3 + a_2' x^2 + a_4' x + a_6',
  \]
  where $K' = K(u,r)$. Note $[K' : K]$ divides~4.  By construction, the
  minimal disk about the roots of the 3-division polynomial $\psi_{3,E'}$ is
  $D(0,0)$. We now have two subcases; the argument in each case is essentially
  the same, though the details are quite different.

  \noindent \textbf{Case 5a:} $K$ has characteristic~0. Without loss of
  generality, we may assume that $a_1' = a_3' = 0$ after making the change of
  coordinates
    $(x,y) \mapsto (x, y - a_1'/2 x - a_3' / 2)$.
  Since this is a composition of a shear and a $y$-translation, it does not
  affect the 3-division polynomial (Lem.~\ref{lem:psi3_coord_change}). We have
  \[
  \psi_{3,E'}(x) = 3x^4 + 4a_2' x^3 + 6 a_4' x^2 + 12 a_6' x + \left(4 a_2' a_6' - (a_4')^2\right).
  \]
  Define
  \[
  A = 4 a_2', \quad B = 2 a_4', \quad C = 4 a_6'.
  \]
  Then $A,B,C \in \cO_{K'}$ since we have arranged for $\psi_{3,E'}$ to have
  integral coefficients. Also, $\psi_{3,E'}$ has four distinct roots over
  $k_{\alg}$, so at least one of $A, C$ has valuation~0. Moreover, the constant
  coefficient of $\psi_{3,E'}$ must have valuation~0. For if not, $\tilde
  \psi_{3,E'}$ would have $0$ as a simple root, which would lift to a root of
  $\psi_{3,E'}$ in $K'$, contradicting the fact that $\psi_{3,E}$ does not have
  a root defined over a weakly totally ramified extension of $K$. It follows that
  \[
   v(AC - B^2) = v\left(16 a_2' a_6' - 4 (a_4')^2\right) = 2v(2). 
  \]

  Suppose first that $A$ has valuation~0. Since $v(a_2') = - 2v(2)$, the
  minimal disk about the roots of $z^2 - a_2'$ is $D := D(\sqrt{a_2'}, 0)$. We
  use Ramified Approximation to choose $s \in D$ such that $K'(s) / K'$ is
  weakly totally ramified and at worst quadratic. Next, define $t = a_4' /
  (2s)$. Note that $v(s) = -v(2)$ and $v(t) = v(a_4') \ge -v(2)$. Set $L =
  K(s)$. Making the change of coordinates $(x,y) \mapsto (x, y+sx + t)$, we
  obtain the new model
  \[
    E''_{/L} \colon y^2 + a_1''xy + a_3'' = x^3 + a_2'' x^2 + a_6'',
  \]
  where
  \[
  a_1'' = 2s, \qquad a_2'' = a_2' - s^2, \qquad a_3'' = 2t, \qquad a_6'' = a_6' - t^2.
  \]
  Evidently, $v(a_1'') = 0$ and $v(a_3'') \ge 0$. If we write $s = \sqrt{a_2'}
  + u$ with $v(u) \ge 0$, then $s^2 = a_2' + u\sqrt{A} + u^2$. In particular,
  we find $v(a_2'') \ge 0$ and 
  \[
     v(a_6'') = v\left( \frac{AC - B^2}{16s^2} + \frac{C(u\sqrt{A} + u^2)}{4s^2}\right).
     \]
  Both displayed terms have nonnegative valuation, so $v(a_6'') \ge 0$. Thus,
  $E''$ is an integral model, and it has good reduction since we haven't
  changed the $3$-division polynomial (Lem.~\ref{lem:roots_reduction2}). The
  extension $L/K$ is weakly totally ramified of degree dividing~8.

  If instead $C$ has valuation~0, then  the minimal disk about
  the roots of $z^2 - a_6'$ is $D(\sqrt{a_6'}, 0)$. We choose $t$ inside this
  disk with the Approximation Theorem, and we set $s = a_4' / (2t)$. After the
  change of coordinates $(x,y) \mapsto (x, y + sx + t)$, the argument is
  similar to the previous paragraph.

  \noindent \textbf{Case 5b:} $K$ has characteristic~2. The 3-division
  polynomial is given by
  \[
    \psi_{3,E'}(x) = x^4 + (a_1')^2 x^3 + a_1' a_3' x^2 + (a_3')^2 x + b_8'.
  \]
  The reduction $\tilde \psi_{3,E'}$ has four distinct roots, so at least one
  of $a_1', a_3'$ has  valuation~0. Also, $v(b_8')
  = 0$ since otherwise $\psi_{3,E}$ has a $K'$-rational root (cf. Case 5a).

  Suppose first that $v(a_1') = 0$. The roots of $z^2 + a_1' z + a_2'$ are
  $\alpha$ and $\alpha + a_1'$, so that the valuation of their difference
  is~0. That is, the minimal disk about the roots of this polynomial is $D :=
  D(\alpha, 0)$. Apply Ramified Approximation to obtain $s \in D$ with
  $K'(s) / K'$ weakly totally ramified and at worst quadratic. Set $t = (a_4' +
  s a_3') / a_1'$. Set $L = K'(s)$. Making the change of coordinates $(x,y)
  \mapsto (x, y+sx + t)$ gives the new model
  \[
     E''_{/L} \colon  y^2 + a_1'xy + a_3'y = x^3 + a_2'' x^2 + a_6'',
  \]
  where
  \[
  a_2'' = s^2 + a_1' s + a_2', \qquad a_6'' = a_6' + ta_3' + t^2.
  \]
  Writing $s = \alpha + u$ for some $u$ with $v(u) \ge 0$, we see that
  $v(a_2'') = u(u+a_1')$, so that $v(a_2'') \ge 0$. Since this change of
  coordinates is a composition of a shear and a $y$-translation, we find $b_8'
  = b_8''$ (Lem.~\ref{lem:psi3_coord_change}). Hence,
  \[
     0 = v(b_8'') = v\left( (a_1')^2 a_6'' + a_2'' (a_3')^2\right).
  \]
  Since $a_2'' (a_3')^2$ is integral and $a_1'$ has valuation~0, we conclude
  that $a_6''$ is integral.  That is, the model $E''$ is integral, and it has
  good reduction since we haven't changed the $3$-division polynomial
  (Lem.~\ref{lem:roots_reduction2}). We also see that $[L : K]$ divides~8,
  and $L/K$ is weakly totally ramified.

  If instead $v(a_3') = 0$, then the minimal disk about the roots of $z^2 +
  a_3' z + a_6'$ is $D(\beta,0)$ for $\beta$ a root. We choose $t$ inside this
  disk with Ramified Approximation, and we set $s = (a_4' + t a_1') / a_3'$.
  After the change of coordinates $(x,y) \mapsto (x, y + sx + t)$, the argument
  mimics the previous paragraph.
\end{proof}


\section{Dynamical systems on \texorpdfstring{$\PP^1$}{P1}}
\label{sec:dynamics}

Let $f \in K(z)$ be a rational function of degree $d \geq 2$. Write $f =
f_0/f_1$, where $f_0, f_1$ are coprime polynomials with integral
coefficients. After rescaling by a common element of $K^\times$, we may assume
that some coefficient of $f_0$ or $f_1$ lies in $\cO_K^\times$. Let $F_0, F_1
\in \cO_K[X,Y]$ be homogeneous polynomials of degree~$d$ such that $f_i =
F_i(z,1)$. Write $\Res(F_0,F_1)$ for the resultant of $F_0, F_1$; it is a
homogeneous polynomial of degree $2d+2$ in the coefficients of $F_0, F_1$
\cite[\S2.4]{Silverman_Dynamics_Book_2007}. Following Rumely, we write
$\ordres(f)$ for $v(\Res(F_0,F_1))$. This quantity is independent of the choice
of $F_0, F_1$, and necessarily $\ordres(f) \ge 0$. We say $f$ has
\textbf{semistable reduction} if
\[
\ordres(f)
= \min_{\sigma \in \PGL_2(\CK)} \ordres\left(\sigma^{-1} \circ f \circ \sigma\right).
\]
We say $f$ has \textbf{good reduction} if $\ordres(f) = 0$.

The \textbf{Gauss point} of $\Aff{\CK}$ is the type~II point
$\zeta_{0,0}$. Given any other type~II point $\zeta \in \Aff{\CK}$, there is an
element $\sigma \in \PGL_2(\CK)$ such that $\sigma(\zeta_{0,0}) = \zeta$, and
this $\sigma$ is unique up to left-multiplication by elements of
$\PGL_2(\cO_{\CK})$. Define $f^\sigma = \sigma^{-1} \circ f \circ \sigma$. Then
we obtain a function $\zeta \mapsto \ordres(f^\sigma)$ that does not depend
on the choice of $\sigma$, and we can extend it to a continuous piecewise
affine map $\Aff{\CK} \to \Rinf$. Rumely showed that this map is convex up on
paths in $\Aff{\CK}$, and he defined the \textbf{minimal resultant locus} ---
denoted $\MRL(f)$ --- to be the subset of $\Aff{\CK}$ on which the function
$\zeta \mapsto \ordres(f^\sigma)$ attains its minimum value
\cite[Thm.~1.1]{Rumely_Min_Res_Loc}. The type~II points of the minimal
resultant locus correspond to coordinate changes $f^\sigma$ that are
semistable \cite[Thm.~C]{Rumely_new_equivariant}.

\begin{proof}[Proof of Theorem~B]
  Suppose first that $f$ has potential good reduction. In
  \cite[\S5]{Benedetto_potential_good_bounds}, Benedetto explains how to
  achieve the desired result: replace Ax's Lemma with a refinement that
  controls ramification. Corollary~\ref{cor:Ax_refinement} is that refinement,
  and the result follows from Benedetto's discussion.

  Now suppose that $f$ is a general rational function of degree $d \ge 2$ on
  $\PP^1_K$. We follow Rumely's argument in
  \cite[Thm.~3.2]{Rumely_Min_Res_Loc}, making the necessary
  modifications. Define $a = f(\infty)$, and let $\Fix(f)$ be the set of type~I
  fixed points of $f$ in $\bP{\CK} = \Aff{\CK} \cup \{\infty\}$. The minimal
  resultant locus is contained in the convex hull of $\Fix(f) \cup f^{-1}(a)$,
  and it is either a single type~II point or a segment with type~II endpoints
  \cite[Thm.~1.1]{Rumely_Min_Res_Loc}. Since $f$ is defined over $K$, the set
  $\MRL(f)$ is stable under the action of $G_K$, though not necessarily
  pointwise fixed.

  Let $Q \in \MRL(f)$ be a point that is fixed by $G_K$
  \cite[Thm.~3.4]{Rumely_Min_Res_Loc}. Then $Q$ corresponds to a
  $G_K$-invariant disk $D$. Since $\MRL(f)$ is contained in the convex hull of
  $\Fix(f) \cup f^{-1}(a)$, there exists $P \in \Fix(f) \cup (f^{-1}(a)
  \smallsetminus \{\infty\})$ such that $Q$ lies on the segment $[P,
    \infty]$. That is, $D$ contains $P$. If $P \in \Fix(f)$, then $P$ has
  degree at most $d+1$, while if $P \in f^{-1}(a) \smallsetminus \{\infty\}$,
  then $P$ has degree at most $d-1$. Since $D$ is $G_K$-invariant, every
  conjugate of $P$ lies in $D$. By Corollary~\ref{cor:Ax_refinement}, there is
  an element $\alpha \in K_\alg \cap D$ such that $K(\alpha) / K$ is separable
  and weakly totally ramified, and $[K(\alpha) : K] \le \max\{q(d+1),
  q(d-1)\}$.

  Now $Q$ lies on the segment $[\alpha, \infty]$. Let $Q_0 \in \MRL(f)$ be the
  closest point to $\alpha$; i.e., $[\alpha,Q_0] \cap \MRL(f) = \{Q_0\}$. The
  function $\ordres(\cdot)$, when restricted to $[\alpha, \infty]$, has a break
  in linearity at $Q_0$. The discussion after (2.9) in
  \cite{Rumely_Min_Res_Loc} shows there are integers $m,n$ with $1 \le n \le
  d+1$ such that $Q_0 = \zeta_{\alpha,m/n}$. (N.B. --- We are working with
  valuations, whereas Rumely works with absolute values.) Taking a uniformizer
  $\pi$ for $K(\alpha)$ and an element $c \in K$ with $v(c)$ sufficiently
  large, we find that $z^n + cz + \pi^m$ is separable and has a root $\beta$
  with valuation $m/n$. It follows that $Q_0$ is defined over
  $K(\alpha,\beta)$, which is separable and weakly totally ramified of degree
  at most $(d+1) \ \max\{q(d+1), q(d-1)\}$. Now conjugate $f$ by $z \mapsto
  \beta z + \alpha$ in order to obtain a model with minimal resultant, and
  hence semistable reduction.
\end{proof}


\bibliographystyle{plain}
\bibliography{xander_bib}

\end{document}